\documentclass[preprint, 11pt]{elsarticle}
\usepackage{amsmath}
\usepackage{amssymb}
\usepackage{amscd}
\usepackage{amsthm}
\usepackage{todonotes}
\usepackage{verbatim}
\usepackage[all]{xy} 




\setlength{\oddsidemargin}{0.5in} \setlength{\evensidemargin}{0.5in}
\setlength{\textwidth}{5.5in}

\def\m{{\mathfrak m}} 

\def\p{{\mathfrak p}}    

\def\v{{\mathfrak v}}


\DeclareMathOperator{\Spec}{Spec}


\begin{document}

\newtheorem{theorem}{Theorem}[section]
\newtheorem{lemma}[theorem]{Lemma}
\newtheorem{proposition}[theorem]{Proposition}
\newtheorem{corollary}[theorem]{Corollary}
\newtheorem{problem}[theorem]{Problem}

\theoremstyle{definition}
\newtheorem{example}[theorem]{Example}
\newtheorem{defi}[theorem]{Definitions}
\newtheorem{definition}[theorem]{Definition}
\newtheorem{remark}[theorem]{Remark}
\newtheorem{discussion}[theorem]{Discussion}
\newtheorem{fact}[theorem]{Fact}
\newtheorem{ex}[theorem]{Example}
\newtheorem{question}[theorem]{Question}
\newtheorem{notation}[theorem]{Notation}
\newtheorem{setting}[theorem]{Setting}

\renewcommand{\thedefi}{}

\long\def\alert#1{\smallskip{\hskip\parindent\vrule%
\vbox{\advance\hsize-2\parindent\hrule\smallskip\parindent.4\parindent%
\narrower\noindent#1\smallskip\hrule}\vrule\hfill}\smallskip}

\def\ff{\mathfrak}
\def\Proj{\mbox{\rm Proj}}
\def\type{\mbox{ type}}
\def\Hom{\mbox{ Hom}}
\def\rank{\mbox{ rank}}
\def\Ext{\mbox{ Ext}}
\def\Ker{\mbox{ Ker}}
\def\Max{\mbox{\rm Max}}
\def\Min{\mbox{\rm Min}}
\def\End{\mbox{\rm End}}
\def\xpd{\mbox{\rm xpd}}
\def\Ass{\mbox{\rm Ass}}
\def\emdim{\mbox{\rm emdim}}
\def\epd{\mbox{\rm epd}}
\def\repd{\mbox{\rm rpd}}
\def\ord{{\rm ord}}
\def\hgt{{\rm ht}}

\def\edeg{{\rm e}}

\begin{frontmatter}

\title{Asymptotic properties of 
infinite directed unions \\ of local quadratic transforms}
\author[Bill]{William Heinzer}
\ead{heinzer@purdue.edu}
\address[Bill]{Department of Mathematics, Purdue University, West Lafayette, Indiana 47907}
\author[Bruce]{Bruce Olberding}
\ead{olberdin@nmsu.edu}
\address[Bruce]{Department of Mathematical Sciences, New Mexico State University, Las Cruces, NM 88003-8001}
\author[Matt]{Matthew Toeniskoetter}
\ead{mtoenisk@purdue.edu}
\address[Matt]{Department of Mathematics, Purdue University, West Lafayette, Indiana 47907}

\begin{abstract}
Let $(R, \m)$ be a  regular local ring of dimension at least 2.  
For each valuation domain birationally dominating $R$,
there is an associated sequence $\{R_n\}$ of local quadratic transforms of $R$.
We consider the case where this sequence $\{ R_n \}_{n \ge 0}$ is infinite 
and examine properties of the integrally closed local domain $S = \bigcup_{n \ge 0}R_n$ in the case where $S$ is not a valuation domain.
For this sequence, there is an associated boundary valuation ring $V = \bigcup_{n \ge 0} \bigcap_{i \ge n} V_i$, where $V_i$ is the order valuation ring of $R_i$.
There exists a unique minimal proper Noetherian overring $T$ of $S$.
$T$ is the regular Noetherian UFD obtained by localizing outside the maximal ideal of $S$ and $S = V \cap T$.
In the present paper, we define functions $w$ and $\edeg$, where $w$ is the asymptotic limit of the order valuations and $\edeg$ is the limit of the orders of transforms of principal ideals.
We describe $V$ explicitly in terms of $w$ and $\edeg$ and prove that $V$ is either rank $1$ or rank $2$.
We define an invariant $\tau$ associated to $S$ that is either a positive real number or $+\infty$.
If $\tau$ is finite, then $S$ is archimedean and $T$ is not local.
In this case, the function $w$ defines the rank $1$ valuation overring $W$ of $V$ and $W$ dominates $S$.
The rational dependence of $\tau$ over $w (T^{\times})$ determines whether $S$ is completely integrally closed and whether $V$ has rank $1$.
We give examples where $S$ is completely integrally closed.
If $\tau$ is infinite, then $S$ is non-archimedean and $T$ is local.
In this case, the function $e$ defines the rank $1$ valuation overring $E$ of $V$.
The valuation ring $E$ is a DVR and $E$ dominates $T$, and in certain cases we prove that $E$ is the order valuation ring of $T$.

\end{abstract}

\begin{keyword}
Regular local ring \sep 
local quadratic transform \sep valuation ring \sep complete integral closure 
\sep archimedean domain \sep generalized Krull domain
\MSC[2010]  13H05  \sep 13A15  \sep 13A18 
\end{keyword}

\end{frontmatter}


\section{Introduction  and summary}  \label{sec1}
 
Let $(R,\m)$ be a regular local ring and let $ S = \bigcup_{n \ge 0}R_n$  be an infinite 
directed union of local quadratic transforms  as in the abstract.  In \cite{HLOST},  the authors  consider
 ideal-theoretic properties of the integral domain  $S$.
The ring $S$ is local and integrally closed. 
Abhyankar proves in \cite[Lemma 12, p. 337]{Abh} that if $\dim R = 2$, then $S$ is a valuation domain.
However, if $\dim S \ge 3$, then $S$ is no longer a valuation domain in general.
In the case where $\dim R \ge 3$, David Shannon examines properties of $S$ in \cite{Sha}.
This motivates the following definition.

\begin{definition}
Let $R$ be a regular local ring with $\dim R \ge 2$ and let $\{R_n\}_{n \ge 0}$ be an infinite sequence of regular local rings, where $R = R_0$ and $R_{n+1}$ is a local quadratic transform of $R_{n}$ for each $n \ge 0$.
Then  $S = \bigcup_{n \ge 0}R_n$  is a {\it Shannon extension} of $R$. 
\end{definition}

Let $S$ be a Shannon extension of $R = R_0$ and let $F$ denote the field of fractions of $R$.  
Associated to 
each of the regular local rings $(R_i,\m_i)$,  there is a rank 1 discrete valuation ring $V_i$ 
defined by  the {\it order function}  $\ord_{R_i}$,  where for $x \in R_i$, 
$\ord_{R_i}(x) = \sup \{ n \mid x \in \m_i^n \}$.
The family $\{V_i\}_{i=0}^\infty$ determines 
a unique set  
\begin{equation*}\label{equation V}
	V ~   =   ~    \bigcup_{n \ge 0}~ \bigcap_{i \ge n} V_i = \{ a \in F ~ | ~  \ord_{R_i }(a) \ge 0 \text{ for } i \gg 0 \}.\end{equation*}

The set $V$  consists of the elements in $F$ that are in all but finitely many of the  $V_i$.
In \cite{HLOST},  the authors prove that $V$ is a valuation domain that birationally
dominates $S$,  and call $V$ the {\it boundary valuation ring} of the Shannon extension $S$.

In Section~\ref{sec2}, we review the concept of the transform of an ideal, and in Sections~\ref{sec3} and \ref{essential}, we discuss previous results on Shannon extensions.
Theorem~\ref{hull} describes an intersection decomposition $S = V \cap T$ of a Shannon extension $S$, where $V$ is the boundary valuation of $S$ and $T$ is the intersection of all the DVR overrings of $R$ that properly contain $S$.
In Setting~\ref{setting 1},  we fix  notation  to  use  
 throughout  the remainder of the paper.
In Discussion~\ref{3.2}, we describe conditions in order that $S$ be a valuation domain.
 
 In Section~\ref{sec4} we
 consider asymptotic behavior of the family $\{\ord_{R_n}\}_{n \ge 0}$   of order
 valuations of a Shannon extension  $S = \bigcup_{n \ge 0}R_n$.   
For nonzero $a \in S$, we fix some $n$ such that $a \in R_n$ and define
  $\displaystyle \edeg (a) = \lim_{i \rightarrow \infty} \ord_{R_{n +i}} ((a R_n)^{R_{n+i}})$,  where 
  $(a R_n)^{R_{n+i}}$ denotes the transform in $R_{n+i}$ of the ideal $aR_n$. 
For nonzero elements $a, b \in S$, we define   $\edeg (\frac{a}{b}) = \edeg (a) - \edeg (b)$.  
In Lemmas~\ref{e facts} and \ref{factorization},  we prove the function $\edeg$ is well 
defined and that $\edeg$  describes   factorization properties of elements in $S$.  

We fix an element $x \in S$ such that $xS$ is primary to the maximal ideal of $S$.
Theorem~\ref{limit exists} proves that the asymptotic limit
$$\lim_{n \rightarrow \infty}  ~~\frac{\ord_n (q)}{\ord_n (x)}$$
exists for every nonzero element  $q$ in the quotient field $F$ of $S$, but may take values $\pm \infty$.
We denote this function
$w : F \rightarrow {\mathbb{R}} ~  \cup   ~  \{-\infty,  ~+ \infty \}$, where $w (0) = + \infty$.  

Let $a \in S$ be nonzero, fix $m$ such that $a \in R_m$, and denote $a_n R_n$ to be the transform of $a 
R_m$ in $R_n$ for all $n \ge m$.
Let $\m_n$ denote the maximal ideal of $R_n$ and $x_n$ be such that $\m_n R_{n+1} = x_n R_{n+1}$.
In Theorem~\ref{w-values}, we prove that 
	$$w(a) ~ =  ~   \sum_{n=m}^{\infty} \ord_{n} (a_n) w (x_n).$$
 
 This allows us to define  the  invariant 
  $\tau = \sum_{n=0}^{\infty} w (x_n)$ 
  associated with the sequence $\{ R_n \}_{n \in \mathbb N}$.
  In Theorem~\ref{arch},  for $S$ with $\dim S \ge 2$, we prove 
  that $\tau < \infty   \iff  S$ is archimedean   $\iff    w$ 
   defines a valuation on $F$ that
  dominates $S  \iff S$ is dominated by a rank 1 valuation domain.
  
By construction,  $w(x) = 1$, so the image of $w$ contains nonzero rational  values. 
  If  the image of  $w$ also has irrational values,   Proposition~\ref{rational rank} 
  gives an explicit finite upper bound for $\tau$.

Let $F^{\times}$ denote the nonzero elements in the quotient field of $S$.
If  $S$ is archimedean  with  $\dim S \ge 2$
we prove in
 Theorem~\ref{v-values}    that the function
 \begin{alignat*}{4}
&v : ~ && F^{\times} &&\rightarrow~&&{\mathbb R} ~ \oplus ~ {\mathbb Z} \\
& ~ &&q &&\mapsto   &&( w (q) , - \edeg (q) ),
\end{alignat*}
defines a  valuation associated to  the boundary valuation ring  $V$ of $S$,
where ${\mathbb R} ~ \oplus ~ {\mathbb Z}$ is ordered lexicographically.
It follows that  $V$  has either rank $1$ or rank $2$.

In Section~\ref{section 7}, we consider the complete integral closure $S^*$ of an archimedean Shannon extension $S$ with $\dim S \ge 2$.
We prove in Theorem~\ref{almost integral} that the almost integral elements in $F$ over $S$ are precisely the elements $a \in T$ such that $w (a) = 0$ and $\edeg (a) > 0$.
Together with Theorem~\ref{w-values}, this allows us to characterize in Theorem~\ref{7.4} whether $S$ is completely integrally closed in terms of the rational dependence of $\tau$ over the subgroup $w (T^{\times})$ of $\mathbb{R}$.
If $S$  is not completely integrally closed,  we prove in Theorem~\ref{7.4} that $S^{*}$ is a generalized Krull domain.
In Examples~\ref{finite construction} and \ref{infinite construction}, we describe 
a method to construct examples of completely integrally closed archimedean Shannon 
extensions that are not valuation domains.

If $S$ is non-archimedean,  we prove in Theorem~\ref{nonarch ew} for $a \in F^\times$ 
that $\edeg (a) > 0$ implies $w(a) = + \infty$  and
the set 
$P_\infty = \{ a \in S \mid w (a) = + \infty \}$ is a prime ideal of both $S$ and $T$.  
Let $xS$ be primary to the maximal ideal of $S$ and denote $P = \bigcap_{n \ge 0} x^nS$.
For a Shannon extension $S$ with $\dim S \ge 2$, we prove in Theorem~\ref{overview} that $S$ is non-archimedean $\iff  T =  ( P :_F P ) \iff  P \ne (0) 
\iff $ every nonmaximal prime ideal of $S$ is contained in $P$ $\iff$ $T$ is a local ring $\iff$ $T$ is the complete integral closure of $S$. 
If $S$ is non-archimedean, we prove in Theorem~\ref{nonarch valuation} that $\edeg$ defines a DVR, $w$ induces a rational rank $1$ valuation on $T / P_{\infty}$, and $V$ is the composite valuation ring of $\edeg$ and $w$.

In general,  our notation is as in Matsumura   \cite{Mat}.  Thus a local ring need not be Noetherian.
 An element $x$ in the maximal ideal $\m$ of a regular local ring $R$ is said
to be a {\it regular parameter} if $x \not\in \m^2$.   It then follows that  the residue class ring $R/xR$ is again a regular local 
ring.  
We refer to an extension ring $B$ of an integral domain $A$ as an {\it overring of} $A$ if $B$ is a subring of the quotient field of $A$.  If, in addition,  $A$ and $B$ are local and the inclusion map $A \hookrightarrow B$ is a 
local homomorphism,  we say that $B$ {\it birationally dominates}  $A$.  We use UFD as an abbreviation for unique factorization domain,  and DVR as an abbreviation for rank 1 discrete valuation ring.  
For the definition of a local quadratic transform, see \cite[Definition 3]{Abh} or \cite{Lip}.

We  thank  Alan Loper and Hans Schoutens  for    correspondence about
infinite directed unions of  local quadratic transformations, and for their collaboration 
in  the article  \cite{HLOST}.

\section{Transform of an ideal}  \label{sec2}

The concept of the transform of an ideal is used extensively in \cite{HLOST}. 
In this article we often   deal with transforms of principal ideals.  
Properties  of the transform of an ideal are  given in Definition~\ref{2.2}  
and Remark~\ref{2.22}.

\begin{definition}   \label{2.2} 
{\rm
Let $A \subseteq B$ be Noetherian UFDs with $B$ an overring of $A$, and 
let    $I$ be  a nonzero ideal of   $A$.   The ideal   $I$ can be written uniquely as $I = P_1^{e_1} \cdots P_n^{e_n}J$, where the $P_i$ are principal prime ideals of $A$, the $e_i$ are positive integers and $J$ is an ideal of $A$  not contained in a principal ideal of $A$ \cite[p.~206]{Lip}.  
  For each $i$, set $Q_i = P_i({A \setminus P_i})^{-1}B  \cap R$.  If $B \subseteq A_{P_i}$, then $A_{P_i} = B_{Q_i}$,  and otherwise $Q_i = B$.
  The {\it transform}\footnote{ The terminology used by Granja in  \cite[p.~1349]{GMR2} 
is strict transform for what Lipman calls the transform.}  
   of $I$ in $B$ is the ideal $$I^B = Q_1^{e_1} \cdots Q_n^{e_n}(JB)(JB)^{-1},$$
  where $(JB)^{-1}$ is the fractional $B$-ideal  consisting   of all elements $x$ in the quotient field of $B$ for which $xJB \subseteq B$.    
Alternatively, $I^B = Q_1^{e_1} \cdots Q_n^{e_n}K$, where $K$ is the unique ideal of $B$ such that 
$JB = xK$  for some  $x \in B$ and  $B$-ideal
$K$  not contained in a proper principal ideal of $B$.}
\end{definition}


  Lipman   \cite[Lemma 1.2 and Proposition 1.5]{Lip}    establishes the following results 
  about transforms: 
  
  \begin{remark}  \label{2.22} {\rm 
   Let   $A \subseteq B \subseteq C$ 
  be Noetherian UFDs   with  $B$ and $C$  overrings of $A$.  Then
   
  \begin{enumerate}
\item   $(I^B)^C = I^C$ for all nonzero  ideals $I$ of $A$.
\item\label{transform products}     $(IJ)^B = I^BJ^B$ for all  nonzero   ideals $I$ and $J$ of $A$.
\item\label{transform primes}    Let  $P$ be a nonzero  principal prime ideal of $A$.  Then the following are equivalent.
\begin{enumerate}
\item[(a)]    $P^B \ne B$. 
\item[(b)]    $B \subseteq A_P$. 
\item[(c)]      $PB \cap A = P$.
\item[(d)]     $P^B  = Q$ is a prime ideal of  $B$ such that $Q \cap A = P$.
\item[(e)]     $P^B  = Q$ is a prime ideal of  $B$   and    $A_P = B_Q$.  
\end{enumerate}
\end{enumerate}}
\end{remark}

Specializing  to the case in which $R$ is a regular local ring, we obtain an explicit formula for the transform of an ideal of $R$ in 
the regular local rings of a  sequence of local quadratic transforms of $R$. 
A proof for Item 3 of Remark~\ref{GR lemma}   is given in \cite[Lemma 3.6 and Remark 3.7]{HKT}.

%
%

\begin{remark} \label{GR lemma} {\rm 
Let $\{(R_i,{\ff m}_i)\}_{i=0}^\infty$ be a sequence of local quadratic transforms of $d$-dimensional regular local rings with $d \ge 2$.
For each $i$, let $x_i$ be an element of ${\m}_i$ such that $\m_iR_{i+1} = x_iR_{i+1}$.  
Let $I$ be an ideal in $R_0$.

\begin{enumerate}
\item $I R_1 = \m_0^{\ord_{R_0}(I)} I^{R_1} = x_0^{\ord_{R_0}(I)} I^{R_1}$ and $I^{R_1} = x_0^{- \ord_{R_0} (I)} I R_1$.\footnote{In \cite[p.~1349]{GMR2}, this last equation is used to define the (strict) transform of a height $1$ prime ideal in $R_1$.}

\item For $n \ge 0$,
$$I R_n = \left( \prod_{i=0}^{n-1} x_i^{\ord_{R_i}(I^{R_i})} \right) I^{R_n} = \left( \prod_{i=0}^{n-1} {\m}_i^{\ord_{R_i}(I^{R_i})} \right) I^{R_n}.$$

\item \label{sequence nonincreasing} The sequence of nonnegative integers $\ord_{R_i} (I^{R_i})$ is nonincreasing.

\end{enumerate}}

   \end{remark}

   
\section{Shannon extensions}  \label{sec3}

In this section we establish the setting in which we work throughout the rest of the paper.
We first recall essential results from \cite{HLOST}.
Let $S$ be a Shannon extension of a regular local ring $R$.
The {boundary valuation ring} $V$ of $S$ is given by
\begin{equation*}
 V = \{ a \in F ~ | ~  \ord_{R_i }(a) \ge 0 \text{ for } i \gg 0 \}.\end{equation*}
The valuation ring $V$ is the unique boundary point for the set of order valuation rings of the $R_i$ with respect to the patch topology on the space of valuation overrings of $R$; see \cite[Section 5]{HLOST}.  
Existence and uniqueness of $V$ is a consequence of the following lemma.   

\begin{lemma}\label{compare} 
$\phantom{}$ \hspace{-.12in} {\rm \cite[Lemma 5.2]{HLOST}}
Let $(R,\m) = (R_0, \m_0)$ be a regular local ring and let $ S = \bigcup_{i \ge 0}R_i$ 
be a Shannon extension of $R$. 
For each nonzero element $a$ in the quotient field of $R$, precisely one of 
following hold: either $\ord_{R_i} (a) > 0$ for $i \gg 0$, $\ord_{R_i} (a) = 0$ for $i \gg 0$, or $\ord_{R_i} (a) < 0$ for $i \gg 0$.
\end{lemma}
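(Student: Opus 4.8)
The plan is to prove the trichotomy by showing that the sequence $\{\ord_{R_i}(a)\}_{i \ge 0}$ is eventually monotone once we pass far enough along the quadratic sequence, and then combining this with the fact that $\ord_{R_i}$ takes integer values. Write $a = b/c$ with $b, c$ nonzero in $R_m$ for some $m$; by additivity of the order valuation, $\ord_{R_i}(a) = \ord_{R_i}(b) - \ord_{R_i}(c)$ for $i \ge m$, so it suffices to understand the behavior of $\ord_{R_i}(b)$ for a nonzero element $b \in R_m$. The key structural input is Remark~\ref{GR lemma}\eqref{sequence nonincreasing}: applied to the principal ideal $bR_m$, the integers $\ord_{R_i}((bR_m)^{R_i})$ form a nonincreasing sequence of nonnegative integers, hence stabilize. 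Call the stable value $\edeg(b)$ (this anticipates the $\edeg$ of the introduction, but at this stage we need only that it exists).

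First I would record the relation between $\ord_{R_i}(b)$ and the transform orders. Using Remark~\ref{GR lemma}(2) with $I = bR_m$, we get $bR_i = \left(\prod_{j=m}^{i-1} x_j^{\,\ord_{R_j}((bR_m)^{R_j})}\right)(bR_m)^{R_i}$, and since the transform $(bR_m)^{R_i}$ is principal (it is the transform of a principal ideal in a UFD), applying $\ord_{R_i}$ gives
\[
\ord_{R_i}(b) \;=\; \sum_{j=m}^{i-1} \ord_{R_j}((bR_m)^{R_j})\,\ord_{R_i}(x_j) \;+\; \ord_{R_i}\big((bR_m)^{R_i}\big).
\]
Each $\ord_{R_i}(x_j)$ is a nonnegative integer, and for fixed $j$ this is nondecreasing in $i$ (indeed $x_j$ generates $\m_j R_{j+1}$, so its transform-order contributions only accumulate as we proceed); moreover the last term $\ord_{R_i}((bR_m)^{R_i})$ is eventually the constant $\edeg(b) \ge 0$. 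The upshot I want is: the difference $\ord_{R_{i+1}}(b) - \ord_{R_i}(b)$ is eventually of constant sign — in fact eventually nonnegative, since once the transform order stabilizes, each step adds $\edeg(b)\cdot\ord_{R_i}(x_i)$ plus the difference of the (now constant) tail, which is a nonnegative integer. Hence $\ord_{R_i}(b)$ is eventually nondecreasing; being a sequence of integers, it is either eventually equal to a constant $k \ge 0$ or eventually tends to $+\infty$, and in the former case either $k = 0$ or $k > 0$.

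Next I would transfer this back to $a = b/c$. Both $\ord_{R_i}(b)$ and $\ord_{R_i}(c)$ are eventually nondecreasing sequences of nonnegative integers, each either eventually constant or divergent to $+\infty$. If both stabilize, then $\ord_{R_i}(a) = \ord_{R_i}(b) - \ord_{R_i}(c)$ is eventually the constant integer $\edeg(b) \cdot(\text{something}) \dots$ — more carefully, eventually constant, so exactly one of the three alternatives holds according to the sign of that constant. If exactly one of the two stabilizes, the sign of $\ord_{R_i}(a)$ is eventually determined by the divergent one. The only genuinely delicate case is when \emph{both} $\ord_{R_i}(b)$ and $\ord_{R_i}(c)$ diverge to $+\infty$: then a priori $\ord_{R_i}(a)$ could oscillate. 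This is the main obstacle, and I would resolve it by a finer analysis of the leading-term structure: once both transform orders have stabilized at their $\edeg$ values, say at stage $N$, for $i \ge N$ we have
\[
\ord_{R_i}(a) \;=\; \sum_{j=N}^{i-1}\big(\edeg(b) - \edeg(c)\big)\,\ord_{R_i}(x_j) \;+\; C,
\]
where $C$ is the (constant) contribution from the stages before $N$ together with the difference of the stabilized transform orders. Since the sum $\sum_{j=N}^{i-1}\ord_{R_i}(x_j)$ is a nonnegative, eventually strictly increasing integer (it counts, with multiplicity, how much $\m_N R_{N+1}\cdots$ survives — it is unbounded precisely because the sequence is infinite and each $x_j$ is a nonunit), the sign of $\ord_{R_i}(a)$ for large $i$ is governed entirely by the sign of the integer $\edeg(b) - \edeg(c)$: positive, zero, or negative, giving exactly one of the three cases. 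Thus in all cases precisely one alternative holds, which completes the proof. I expect the write-up to require care in justifying that $\sum_{j}\ord_{R_i}(x_j) \to \infty$ and in handling the bookkeeping of the pre-stabilization terms, but no new ideas beyond Remark~\ref{GR lemma}.
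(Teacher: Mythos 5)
Your setup is fine — writing $a=b/c$, using Remark~\ref{GR lemma}(2) to get $\ord_{R_i}(b)=\sum_{j=m}^{i-1}\ord_{R_j}\bigl((bR_m)^{R_j}\bigr)\ord_{R_i}(x_j)+\ord_{R_i}\bigl((bR_m)^{R_i}\bigr)$, and observing that the transform orders stabilize — but the last step has a genuine error. The quantity you call $C$ is not constant: the pre-stabilization contribution is $\sum_{j=m}^{N-1}\delta_j\,\ord_{R_i}(x_j)$ with $\delta_j=\ord_{R_j}(b_j)-\ord_{R_j}(c_j)$, and the factors $\ord_{R_i}(x_j)$ are orders of fixed elements of $S$, which grow without bound in $i$ in general (already in dimension two along a valuation with irrational value ratio). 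Since the $\delta_j$ can have mixed signs, this term can compete with and overwhelm your ``main'' term $(\edeg(b)-\edeg(c))\sum_{j\ge N}\ord_{R_i}(x_j)$. Worse, the conclusion you draw — that the eventual sign of $\ord_{R_i}(a)$ is the sign of $\edeg(b)-\edeg(c)$ — is false, not merely unproved: for $a\in T^{\times}$ (e.g.\ a ratio of powers of the $x_j$) one has $\edeg(b)=\edeg(c)$, yet the eventual sign of $\ord_{R_i}(a)$ is that of $w(a)$, which can be positive, zero, or negative (Theorem~\ref{w properties}); conversely, in the archimedean case an element such as $p/x^{M}$ with $p$ an essential prime and $M$ large has $\edeg(b)-\edeg(c)>0$ but eventually negative orders. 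This is exactly the content of Theorem~\ref{v-values}: $\edeg$ only breaks ties among elements of equal $w$-value, so it cannot by itself decide the sign. Your reduction is also circular at this point: deciding the sign of $\sum_j\delta_j\ord_{R_i}(x_j)$ is again an instance of the lemma, applied to the element $\prod_j x_j^{\delta_j}$.

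The paper does not reprove this lemma (it quotes \cite[Lemma 5.2]{HLOST}), but the intended argument is the one reprised in the proof of Lemma~\ref{factorization}: instead of transforming $bR_m$ and $cR_m$ separately, transform the two-generated ideal $Q_0=(b,c)R_m$. Writing $Q_i=(b_i,c_i)$ for its transform in $R_i$, the common monomial factor $\prod_j x_j^{\ord_{R_j}(Q_j)}$ cancels in the quotient, so $\ord_{R_i}(a)=\ord_{R_i}(b_i)-\ord_{R_i}(c_i)$ while $\min\{\ord_{R_i}(b_i),\ord_{R_i}(c_i)\}=\ord_{R_i}(Q_i)$, which stabilizes at some value $e$ by Remark~\ref{GR lemma}(3). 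Once it has stabilized, the nonincreasing property of orders of transforms of the principal ideals $b_iR_i$ and $c_iR_i$ shows the minimizing generator cannot switch: if $\ord_{R_i}(c_i)=e\le\ord_{R_i}(b_i)$, then $c_{i+1}=c_ix_i^{-e}$ generates the transform of $c_iR_i$, so $\ord_{R_{i+1}}(c_{i+1})\le e$ and hence $=e\le\ord_{R_{i+1}}(b_{i+1})$; thus the weak inequality persists, and if equality $\ord_{R_i}(b_i)=\ord_{R_i}(c_i)=e$ ever occurs it persists, giving the trichotomy. Your first two cases (at least one of $\ord_{R_i}(b)$, $\ord_{R_i}(c)$ bounded) are correct; it is precisely the doubly-unbounded case where the transform of the two-generated ideal, rather than the two principal transforms taken separately, is what makes the argument work.
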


In addition to the boundary valuation ring $V$, we work extensively with the {\it Noetherian hull} $T$ of $S$.
The authors establish in \cite[Theorems~4.1 and 5.4]{HLOST} basic properties of $T$ and demonstrate the intersection decomposition $S = V \cap T$.


\begin{theorem} $\phantom{}$ \hspace{-.09in}  {\rm \cite[Theorems 4.1 and~5.4]{HLOST}}    \label{hull} 
   Let $S$ be a Shannon extension of a regular local ring $R$.
Let $N$ be the maximal ideal of $S$,  let $T$ be the intersection of all the 
DVR  overrings of $R$   that properly contain $S$, and let $V$ be the boundary valuation ring of $S$.  
Then:
\label{flat} 
\begin{enumerate}

\item
$S = V \cap T$.  

\item 
There exists $x \in N$ such that $xS$ is $N$-primary, and $T = S [1/x]$ for any such $x$.
It follows that the units of $T$ are precisely the ratios of $N$-primary elements of $S$ and $\dim T = \dim S - 1$.

\item
$T$ is a localization of $R_i$ for $i \gg 0$. In particular, $T$ is a Noetherian regular UFD.

\item
$T$ is the unique minimal proper Noetherian overring of $S$. 

 \end{enumerate} 
\end{theorem}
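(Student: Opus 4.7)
The plan is to prove (2) and (3) first, via an explicit $x \in N$ with $xS$ being $N$-primary for which $S[1/x]$ is a localization of some $R_n$; parts (1) and (4) then follow.

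The first task is to produce $x \in N$ with $\sqrt{xS} = N$. Using the quadratic-transform structure and the factorizations in Remark~\ref{GR lemma}, one can select $x \in \m_0$ whose transform $x^{R_n}$ stabilizes as the unit ideal---which Remark~\ref{GR lemma}(3) ensures is possible by appropriate choice---so that, by Remark~\ref{GR lemma}(2), $xR_n$ becomes a product of powers of the $z_j$ (where $z_j$ is a regular parameter of $R_j$ with $\m_j R_{j+1} = z_j R_{j+1}$). This forces $\m_n \subseteq \sqrt{xR_n}$ for $n \gg 0$, and hence $N = \sqrt{xS}$. Fix such an $x$ and $n$. Because $\m_n$ expands to the unit ideal in $R_n[1/x]$, the parameter $z_n$ is a unit there, so $R_{n+1} \subseteq R_n[1/x]$; iterating, $S[1/x] = R_n[1/x]$, which is a localization of the regular Noetherian UFD $R_n$. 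This proves (3), and, combined with the $N$-primary condition, it gives the units and dimension claims in (2): any $N$-primary $y$ satisfies $x^m \in yS$, so $1/y \in S[1/x]$, and every unit of $S[1/x]$ arises this way as a ratio of $N$-primary elements.

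Setting $T_0 = S[1/x]$, I would identify $T_0$ with $T$ by verifying both containments. If $D$ is a DVR overring of $R$ properly containing $S$, the center of $D$ on $S$ is strictly smaller than $N$, so some $y \in N$ is a unit in $D$; combined with $y^k \in xS$ this gives $1/x \in D$, hence $T_0 \subseteq T$. For the reverse, $T_0$ is a regular Noetherian UFD, hence equal to the intersection of its height-one localizations, each of which is a DVR overring of $R$ properly containing $S$. For (4), the same line of argument shows any Noetherian overring $S' \supsetneq S$ has some $y \in N$ that is a unit in $S'$ (otherwise every element of $N$ would be a non-unit in $S'$, and a Noetherianity-and-dominance argument forces $S' = S$), so $1/x \in S'$ and $T \subseteq S'$.

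Finally for (1), the inclusion $S \subseteq V \cap T$ is immediate since $V$ dominates $S$ and $S \subseteq T$. Given $a \in V \cap T$, write $a = s/x^k$ with $s \in S, k \ge 0$; the assumption $a \in V$ yields $\ord_{R_i}(s) \ge k\,\ord_{R_i}(x)$ for $i \gg 0$. I expect this to be the main obstacle: the order valuations record only the maximal ideal at each stage, while $s \in x^k R_i$ in the UFD $R_i$ requires divisibility at every height-one prime. The approach is to factor both $sR_i$ and $xR_i$ via Remark~\ref{GR lemma}(2) as products of powers of the $z_j$ and transformed principal primes; for $i \gg 0$ the transform $x^{R_i}$ is a unit, so the height-one primes dividing $x^k R_i$ are exactly among the $z_j$, and the order inequality, applied stage by stage, accounts for their multiplicities to give $x^k \mid s$ in $R_i$ and hence $a \in S$.
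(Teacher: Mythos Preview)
The paper does not prove this theorem; it is cited from \cite[Theorems~4.1 and~5.4]{HLOST}, so there is no in-paper argument to compare against. Assessing your proposal on its own merits:

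There is a genuine gap in your construction of the $N$-primary element. You claim that if $x \in \m_0$ has $(xR_0)^{R_n} = R_n$ for large $n$ then, since $xR_n$ is a product of powers of the $z_j$ by Remark~\ref{GR lemma}(2), one obtains $\m_n \subseteq \sqrt{xR_n}$ for $n \gg 0$ and hence $N = \sqrt{xS}$. This implication fails. Take $R_0$ to be a $3$-dimensional regular local ring with regular system of parameters $x,y,z$; choose $z_0 = x$, and $z_n = y/x$ for every $n \ge 1$. Then $(xR_0)^{R_1} = R_1$, so your hypothesis is satisfied by the element $x = z_0 \in \m_0$; yet in $R_n$ one computes $x = (y/x)^{n-1}(x^n/y^{n-1})$, where $x^n/y^{n-1}$ is a regular parameter of $R_n$ not dividing any power of $y/x$. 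Hence $(y/x)^k \notin xR_n$ for all $k$ and all $n$, so $y/x \notin \sqrt{xS}$ and $xS$ is \emph{not} $N$-primary. (Here $(y/x)S = N$ \emph{is} $N$-primary, but $y/x \notin \m_0$.) Remark~\ref{GR lemma}(3) only says the sequence $\ord_n(I^{R_n})$ is nonincreasing; it provides no mechanism for selecting $x$ with the property you need. The same error undermines your claim that $\m_n R_n[1/x] = R_n[1/x]$, which you use to get $S[1/x] = R_n[1/x]$.

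Your sketch for $S = V \cap T$ also hides the main difficulty. The phrase ``the order inequality, applied stage by stage, accounts for their multiplicities to give $x^k \mid s$'' is exactly the nontrivial content: the elements $z_j$ are typically not prime in $R_n$, and the single valuation $\ord_i$ does not isolate their individual contributions. The argument that actually works (compare Lemma~\ref{factorization}(1) of this paper, whose proof follows \cite[Lemma~5.2]{HLOST}) tracks the transforms of the ideal $(s, x^k)R_n$ as $n$ grows and uses the eventual stabilization of $\ord_n$ on that ideal, rather than comparing $z_j$-exponents termwise.
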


To simplify hypotheses, we establish a setting for the rest of the article. 

\begin{setting} \label{setting 1}  {\rm  We make the following assumptions throughout the paper. 
\begin{enumerate}
\item
 $R$ is a regular local ring with maximal ideal ${\ff m}$ and quotient field $F$.

\item
$\{ R_n \}_{n \ge 0}$ is an infinite sequence of regular local rings, where $R = R_0$ and $R_{n+1}$ is a local quadratic transform of $R_n$ for each $n \ge 0$.

\item $\dim R = \dim R_n = d \ge 2$ for all $n \ge 0$.
Since Krull dimension does not increase upon taking local quadratic transform, we achieve this condition by replacing $R$ with $R_n$ for some large $n$.

\item $S = \bigcup_{n = 0}^\infty R_n$ denotes the Shannon extension of $R$ along $\{R_n\}$ and $N=\bigcup_{n=0}^\infty {\ff m}_n$ denotes the maximal ideal of $S$.

\item
For each $n \geq 0$, 
denote by $\ord_n:F \rightarrow {\mathbb{Z}} \cup \{\infty\}$ the order valuation of $R_n$ 
and by $V_n = \{q \in F:\ord_n(q) \geq 0\}$ the corresponding DVR.

\item 
Fix $x \in S$ such that $xS$ is $N$-primary and denote by $T = S [1/x]$ the Noetherian hull of $S$; see Theorem~\ref{hull}(2).

\item\label{no proximity}
$T$ is a localization of $R$.
By Theorem~\ref{hull}(2), we achieve this by replacing $R$ with $R_n$ for some large $n$.  

\item 
$V$ denotes the boundary valuation ring for $S$ and $S^*$ denotes the complete integral closure of $S$.

\end{enumerate}}
\end{setting}

Setting~\ref{setting 1}\ref{no proximity} is equivalent to for all $n \ge 0$, $\m_n T = T$.
This, together with Remark~\ref{GR lemma}, implies the following fact:

\begin{remark} \label{transform equals extension}
Assume Setting~\ref{setting 1} and let $I \subset R_n$ be an ideal.
Then for $m \ge n$, Setting~\ref{setting 1}\ref{no proximity} implies that $(I^{R_m}) T = I T = I^{T}$.
\end{remark} 

We separate Shannon extensions into those that are archimedean and those that are non-archimedean, where we use the following definition.

\begin{definition} {\rm An integral domain $A$ is {\it archimedean} if $\bigcap_{n>0} a^n A = 0$ for each nonunit $a \in A$.} 
\end{definition}

\begin{remark}  \label{dim nonarch} {\rm
If $A$ is a  non-archimedean integral  domain,  then $\dim A \ge 2$.  Indeed,
if  there exists a nonzero nonunit $a \in A$ such that $0 \ne b \in \bigcap_{i>0}a^iA$ 
for some $b \in A$, then a maximal ideal containing $a$  cannot  be  a minimal 
prime of  $bA$.   
A  Shannon extension $S$ of $R$ as in Setting~\ref{setting 1}   is archimedean  if and only if $\bigcap_{n>0}x^nS = 0$, where $x$ is as in Setting~\ref{setting 1}(5).   A Shannon extension $S$ with $\dim S = 1$ is a rank 1 valuation 
ring,  cf. \cite[Theorem~8.1]{HLOST}.}
\end{remark}

A Shannon extension $S$ is a directed union of integrally closed domains, and is therefore
integrally closed.  
However, there often exist elements in the field $F$ that are almost integral over $S$ and not in $S$.
If this happens, then $S$ is not completely integrally closed;\footnote{An element $\theta$ in the field of fractions of an integral domain $A$ 
 is  {\it almost integral} over $A$ if the ring $A [\theta]$ is a fractional ideal of $A$.
The integral  domain $A$ is \emph{completely integrally closed} if   each element in the field of 
fractions of $A$ that is almost integral over $A$ is already in $A$.  
The \emph{complete integral closure} of a domain is the ring of almost integral elements in its field of fractions. In general, the complete integral closure of a domain may fail to be completely integrally closed.}
see for example Theorem~\ref{not cic}. 
The complete integral closure $S^*$ of an archimedean Shannon extension $S$ is described by the following theorem.  

\begin{theorem}\label{cic arch} 
$\phantom{}$ \hspace{-.09in}
{\rm \cite[Theorem~6.2]{HLOST}}
Assume notation as in Setting~\ref{setting 1} and assume $S$ is archimedean.
Denote by $W$ the rank one valuation overring of the boundary valuation ring $V$.  
Then:
\begin{enumerate}
\item[{\rm (1)}]
 $S^* ~= ~ N :_F N~ = ~ W \cap T  $.
 \item[{\rm (2)}] 
 $S ~ =  ~S^*   \quad \iff  \quad V ~ = ~ W$.
 \item[{\rm (3)}] 
 If $S \ne S^*$, then $S^*$ is a generalized Krull domain, and 
 $W$ is the unique rank 1  valuation overring   $\mathcal V$ of $S$   such that  
 $ S^* = T \cap \mathcal V$.   
 \end{enumerate}
\end{theorem}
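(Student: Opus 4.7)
The plan is to prove the three parts in sequence, with (1) carrying the structural content and (2), (3) largely following from it. For (1), I would establish the chain
\[
N :_F N \ \subseteq\ S^{*} \ \subseteq\ W \cap T \ \subseteq\ N :_F N.
\]
The first inclusion is standard: iterating $aN \subseteq N$ gives $a^n N \subseteq N$ for all $n$, so any nonzero $b \in N$ serves as a common denominator for the powers of $a$, making $a$ almost integral over $S$. For the second, given $a \in S^{*}$ with $c a^n \in S$ for all $n$ and some nonzero $c$, and any rank $1$ valuation overring $D$ of $S$, the inequality $v_D(c) + n\, v_D(a) \ge 0$ for all $n \ge 0$ forces $v_D(a) \ge 0$; applying this to $D = W$ and to the DVR overrings whose intersection is $T$ (Theorem~\ref{hull}) yields $a \in W \cap T$.

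The third and most substantive inclusion uses that the archimedean hypothesis forces $W$ to dominate $S$. Given $a \in W \cap T$ and $b \in N$, the product $ab$ lies in $T$; moreover, $v_W(a) \ge 0$ and $v_W(b) > 0$ give $v_W(ab) > 0$. Since $W$ is the rank $1$ quotient valuation of $V$, this strict positivity in the first component forces $ab$ into the maximal ideal of $V$ in both the rank $1$ case ($V = W$) and the rank $2$ case. Hence $ab \in V \cap T = S$, and $ab \in N$ by domination of $S$ by $V$. This completes (1).

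For (2), the direction $V = W \Rightarrow S = S^{*}$ is immediate from (1). The converse is the main obstacle: assuming $V$ has rank $2$ with value group $\mathbb{R} \oplus \mathbb{Z}$, as recorded in Theorem~\ref{v-values}, one must exhibit an element of $W \cap T \setminus V$, contradicting $S = S^{*} = W \cap T$. Since $W$ is the complete integral closure of the rank $2$ valuation $V$, there is $\sigma \in V$ with $v_V(\sigma) = (0,1)$, and then $\sigma^{-1} \in W \setminus V$. The delicate point is to arrange $\sigma^{-1} \in T$, equivalently to choose $\sigma$ to be a unit of $T$, i.e.\ a ratio of $N$-primary elements of $S$ (Theorem~\ref{hull}). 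This is where the fine transform-order machinery of later sections enters, and produces an element with $w(\sigma^{-1}) = 0$ and $e(\sigma^{-1}) > 0$.

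For (3), assume $S \ne S^{*}$. Since $T$ is a Noetherian Krull UFD, $T = \bigcap_{P \in X^{(1)}(T)} T_P$ is an intersection of DVR localizations with finite character, so from (1) we get
\[
S^{*} \ =\ W \cap T \ =\ W \cap \bigcap_{P \in X^{(1)}(T)} T_P,
\]
expressing $S^{*}$ as an intersection of rank $1$ valuation overrings. Essentiality (each $T_P$ and $W$ is a localization of $S^{*}$ at a prime) and finite character transfer from $T$ once one uses domination of $S^{*}$ by $W$, yielding that $S^{*}$ is generalized Krull. For uniqueness, any rank $1$ valuation overring $\mathcal{V}$ of $S$ with $S^{*} = T \cap \mathcal{V}$ must dominate $S^{*}$ and contain every element almost integral over $S$; combined with the rank $1$ characterization of the boundary this forces $\mathcal{V} = W$. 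Across the three parts, the hardest step is the converse direction of (2): constructing an explicit almost-integral element outside $S$ when $V$ has rank $2$ requires the later transform-order apparatus, while the remaining assertions reduce to standard facts about Krull domains and rank $1$ valuations.
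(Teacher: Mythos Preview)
This theorem is quoted from \cite[Theorem~6.2]{HLOST} and is \emph{not} proved in the present paper; it is stated here only for use in later sections. So there is no ``paper's own proof'' to compare against, and your proposal should be read as an independent attempt.

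Your argument for (1) is sound, but the crucial step---that $W$ dominates $S$, i.e.\ that $b \in N$ forces $b \in \mathfrak m_W$---is not a formality. At the point where the theorem is stated, $W$ is only defined as the rank~$1$ overring of $V$; knowing that $V$ dominates $S$ gives $N \subseteq \mathfrak m_V$, but $\mathfrak m_W$ is a \emph{smaller} prime of $V$, so $N \subseteq \mathfrak m_W$ needs justification. In this paper the fact is available via Theorem~\ref{w properties}\ref{w properties 2} together with Corollary~\ref{rank 1 overring} (identifying $W$ with the valuation ring of $w$), neither of which depends on Theorem~\ref{cic arch}, so your use is not circular. But you should say this rather than asserting that ``the archimedean hypothesis forces $W$ to dominate $S$'' as if it were immediate; indeed in the non-archimedean case the rank~$1$ overring of $V$ does \emph{not} dominate $S$ (cf.\ Theorem~\ref{nonarch valuation}).

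For the converse in (2) and for (3) you have outlines rather than proofs. In (2), once $V$ has rank~$2$ you need an element of $(W \cap T)\setminus V$; the existence of $\sigma \in \mathfrak m_V$ with $w(\sigma)=0$ is clear, but arranging $\sigma \in T^\times$ is exactly the substance of the matter, and ``the fine transform-order machinery of later sections enters'' is not a proof. A non-circular route does exist: the implication (2)$\Rightarrow$(3) of Theorem~\ref{not cic} (which uses only Theorem~\ref{v-values} and Remark~\ref{w-values representation}) produces from $V \subsetneq W$ an element of $T$ with $w$-value $0$ and positive $\edeg$-value, hence in $(W\cap T)\setminus V$. For (3), your intersection $S^* = W \cap \bigcap_P T_P$ is correct and finite character is inherited from $T$, but the essentiality condition for a generalized Krull domain---that each $T_P$ and $W$ is a localization of $S^*$---requires real work (this is the content of Theorem~\ref{7.4}(1), which shows $N$ has height~$1$ in $S^*$ with $S^*_N = W$ by exhibiting, for each height~$1$ prime $\mathfrak p$ of $T$, an element of $\mathfrak p \cap S^*$ that is a unit of $W$). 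Your uniqueness sketch is also incomplete: $S^* = T \cap \mathcal V$ does not by itself force $\mathcal V$ to dominate $S^*$.
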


\section{Essential prime divisors of a Shannon extension} \label{essential}
 
 \begin{definition} \label{epd def}
{\rm For an integral  domain $A$, let \begin{center}$\epd(A) ~= ~ \{A_P \mid P$ is a height $1$ prime ideal of $A\}$. \end{center}
The notation is motivated by the fact that if $A$ is a Noetherian 
integrally closed domain, then  $\epd(A)$ is the set of  essential prime divisors of $A$. 
With notation as in Setting~\ref{setting 1},  define 
$$\epd(S/R) ~ =  ~ \big\{\mathcal V  \in \bigcup_{i \geq 0} \epd(R_i) \mid S \subseteq \mathcal V   \big\}.$$ }
\end{definition}

 \begin{discussion}  \label{3.2}  {\rm   The authors show  in \cite[Remark~2.4 and Lemma~3.2]{HLOST} that 
 the set $\epd(S/R)$ consists of the essential prime divisors of $R$ that contain $S$
 along with the order valuation rings of any of the $R_i$ that contain $S$.
Moreover, $S$ is a rank $1$ valuation domain if and only 
if $\epd (S / R) = \emptyset$ \cite[Proposition 4.18]{Sha},\footnote{In this case, the sequence $\{ R_i \}$ is said to   switch  strongly infinitely often.}
and $S$ is a rank $2$ valuation domain if and only if $\epd (S / R)$ consists of a single 
element \cite[Theorem 13]{Gra}.\footnote{In this case, the sequence $\{ R_i \}$ is said to be height 1 directed.}

A Shannon extension $S$ is a rank 1 valuation domain 
if and only if $\dim S = 1$, cf.~\cite[Theorem~8.1]{HLOST}.  
If $S$ is not a rank $1$ valuation domain, then    $\dim S \ge 2$ and 
$\epd(S/R) = \epd(S)$.
In this case, Granja and Sanchez-Giralda \cite[Definition~3]{Gra2} define $\{R_i\}$ to be a  {\it quadratic sequence along a prime ideal $\p$ of $R$}
if the transform of $\p$ in $R_i$ is a proper ideal   of $R_i$  for all $i$,  or equivalently, 
$S \subseteq R_\p$
   \cite[Remark~4]{Gra2}.    

Let $T$ be the Noetherian hull of $S$.
Theorem~\ref{hull}(2) implies that $\epd (S / R) = \epd (T)$.
In addition, we have
\begin{enumerate}
    \item[(1)]     $\{R_i\}$ is a quadratic sequence along $\p$ if and only if $T \subseteq R_\p$,   and 
    \item[(2)]   $\p$ is 
   maximal for the sequence $\{R_i\}$  as in \cite[Definition~6]{Gra2}  if and only if $\p R_\p \cap T$ is a maximal ideal of $T$.  
\end{enumerate}

Assume     $\{R_i\}$ is a quadratic sequence along $\p$ and $R/\p$ is regular.     Let 
$\p_i = \p R_\p \cap R_i$ denote the transform of $\p$ in $R_i$.  
Granja and Sanchez-Giralda \cite[Theorem~8]{Gra2}  prove that  $\p$ is 
   maximal for the sequence $\{R_i\}$  if and only if 
   $S/(\p R_\p \cap S)  =  \bigcup_{i \ge 0}R_i/\p_i$ is a rank 1 valuation domain.}

 \end{discussion}

\begin{definition}
{\rm Assume Setting~\ref{setting 1} and let $p \in R_i$ be a nonzero prime element.
We call $p$ an \emph{essential prime element} of $S/R_i$ if $(R_i)_{p R_i} \in \epd (S / R)$.}
\end{definition}

If $p$ is an essential prime element of $S/R_i$, then it follows from  results 
described in  Theorem~\ref{hull}(2) and  Discussion~\ref{3.2}   
 that  $p T$ is  a  height 1 prime ideal of $T$.  Notice, however, that  $p \notin R_j$ for $j < i$,
 and   $p$ is not  a   prime  element  in $R_j$ for   $j > i$.  The ideal  $p S$ is a proper ideal of $S$,
 but is  not a prime ideal of  $S$.

\begin{proposition} \label{4.4}
Assume Setting~\ref{setting 1}.
\begin{enumerate}
\item
Let $p$ be a prime element of $R_n$. Then $p$ is an essential prime element of $S/R_n$ $\iff$  $(pR_n)^{R_m} \ne R_m$ for all $m>n$ $\iff$ $p \not \in T^\times$.



\item \label{4.4.3}
Let $a \in R_n$. Then $a \in T^\times$ if and only if $(aR_n)^{R_m} = R_m$ for $m \gg n$.

\item \label{6.3} Let $a \in R_n$ be a nonzero nonunit.
Then $a = u \tilde{a}$ in $R_n$, where $u \in R_n \cap T^{\times}$ and $\tilde{a}$ is a possibly empty product of essential prime elements of $S / R_n$.
By convention, an empty product is $1$.

\item \label{4.4.4}  Let $a$ be a nonzero nonunit in $R_n$ and as in~\ref{6.3} write $a = up_1 \cdots p_n$, where $u \in T^\times$ and $p_1,\ldots,p_n$ are essential prime elements of $S/R_n$. For each $i$, let $P_i = p_iR_n$.  Then for each $m \geq n$, we have   $(a R_m)^{R_i} = P_1^{R_i} \cdots P_r^{R_i} R_i$ for $i \gg m$. 

\end{enumerate}
\end{proposition}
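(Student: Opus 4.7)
The plan is to reduce all four parts to the factorization structure of the regular local ring $R_n$ (a UFD) combined with the transform calculus of Remarks~\ref{2.22} and~\ref{GR lemma}, together with the Noetherian hull description $T = S[1/x]$ and the fact that $T$ is the intersection of all DVR overrings of $R$ properly containing $S$ (Theorem~\ref{hull}), along with $\m_n T = T$ from Setting~\ref{setting 1}(\ref{no proximity}).

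For Part~(1), since $pR_n$ is a height one prime of the regular local ring $R_n$, we have $(R_n)_{pR_n} \in \epd(R_n)$, so $p$ is essential iff $S \subseteq (R_n)_{pR_n}$. Applying Remark~\ref{2.22}(3) with $A = R_n$, $B = R_m$ shows $(pR_n)^{R_m} \neq R_m$ iff $R_m \subseteq (R_n)_{pR_n}$; taking the union over $m > n$ yields the first equivalence. For the third, apply Remark~\ref{2.22}(3) with $B = T$: since $T$ is a localization of $R_n$ by Setting~\ref{setting 1}(\ref{no proximity}), $p \in T^\times$ iff $pT = T$ iff $T \not\subseteq (R_n)_{pR_n}$. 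To close the loop I would show $S \subseteq (R_n)_{pR_n}$ iff $T \subseteq (R_n)_{pR_n}$: the $\Leftarrow$ direction is immediate, and for $\Rightarrow$ I may assume $\dim S \geq 2$ (the case $\dim S = 1$ being vacuous since $\epd(S/R) = \emptyset$ then by Discussion~\ref{3.2}), so $S$ is not a DVR, the DVR $(R_n)_{pR_n}$ properly contains $S$, and hence contains $T$ by the defining property of $T$ in Theorem~\ref{hull}.

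Parts~(2) and~(3) then follow quickly. Factor $a = u\prod_i p_i^{e_i}$ in $R_n$ with $u \in R_n^\times$ and each $p_i$ prime. By Remark~\ref{2.22}(2), $(aR_n)^{R_m}$ equals the product $\prod_i \bigl((p_iR_n)^{R_m}\bigr)^{e_i}$, which equals $R_m$ iff each $(p_iR_n)^{R_m} = R_m$, iff by Part~(1) each $p_i \in T^\times$, iff $a \in T^\times$. For Part~(3), I partition the prime factors of $a$ into those in $T^\times$ and those that are essential, absorbing the former together with $u$ into the desired element of $R_n \cap T^\times$.

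The main obstacle is Part~(4): a prime $p_j \in R_n$ need not remain prime in $R_m$, so $(p_jR_m)^{R_i}$ is not obviously equal to $P_j^{R_i} = (p_jR_n)^{R_i}$. To bridge this, apply Remark~\ref{GR lemma}(2) to the ideal $p_jR_n$, obtaining
\[
p_jR_m \;=\; \Bigl(\prod_{k=n}^{m-1} x_k^{\ord_{R_k}((p_jR_n)^{R_k})}\Bigr)\,(p_jR_n)^{R_m}.
\]
Each $x_k$ generates $\m_kR_{k+1}$ and $\m_kT = T$ by Setting~\ref{setting 1}(\ref{no proximity}), so $x_k \in T^\times$, and hence $(x_kR_m)^{R_i} = R_i$ for $i \gg m$ by Part~(2). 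Remarks~\ref{2.22}(1) and~(2) then give $(p_jR_m)^{R_i} = ((p_jR_n)^{R_m})^{R_i} = (p_jR_n)^{R_i} = P_j^{R_i}$ for $i \gg m$, and similarly $(uR_m)^{R_i} = R_i$. Multiplying the contributions via Remark~\ref{2.22}(2) finishes the claim. The delicate step is the careful bookkeeping required to compare transforms of the same element computed starting from different rings in the tower.
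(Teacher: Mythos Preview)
Your proof is correct and follows essentially the same route as the paper: reduce to prime elements via the UFD structure of $R_n$, use the transform calculus of Remarks~\ref{2.22} and~\ref{GR lemma}, and track which factors become units in $T$. The only noteworthy variation is in Part~(1), where you link ``$p$ essential'' to ``$p \notin T^\times$'' via the characterization of $T$ as the intersection of the DVR overrings of $R$ properly containing $S$ (so $S \subseteq (R_n)_{pR_n}$ forces $T \subseteq (R_n)_{pR_n}$), whereas the paper instead invokes Remark~\ref{transform equals extension} for one direction and the fact that $pT$ is a height~$1$ prime for the other; both arguments are short and your version is perhaps slightly more conceptual.
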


\begin{proof}
The first equivalence of item~(1) follows from Remark~\ref{2.22}\ref{transform primes}.
To see equivalence with the third statement, use Remark~\ref{transform equals extension} for the ``$\Leftarrow$'' implication.
The ``$\Rightarrow$'' implication follows from the fact that if $p$ is an essential prime element of $S / R_n$, then $pT$ is a height $1$ prime ideal of $T$.

To see Item~\ref{4.4.3}, let $a \in R_n$.
Since the cases where $a = 0$ or $a$ is a unit are trivial, we may assume $a$ is a nonzero nonunit in $R_n$.
Since $R_n$ is a UFD, we may write $a = p_1 \cdots p_n$, where the $p_i$ are prime elements of $R_n$.
Then from Item~(1) and Remark~\ref{2.22}\ref{transform products}, it follows that $a$ is a unit in $T$ $\iff$ each $p_i$ is a unit in $T$ $\iff$ $(p_i R_n)^{R_m} = R_m$ for $m \gg n$ for all $i$ $\iff$ $(a R_n)^{R_m} = R_m$ for $m \gg n$.

Item~\ref{6.3} follows from (1) and the fact that $R$ is a UFD. 

For item (4), from Remark~\ref{GR lemma} 
we have that $a R_m = u u' P_1^{R_m} \cdots P_r^{R_m}$, where $u'R_m$ is a product of powers of $\m_k R_m, \ldots, \m_{m-1} R_m$.
The assumption in Setting~\ref{setting 1}\ref{no proximity} implies that $\m_k S$ is $N$-primary  for each $k$, so $u' \in T^{\times}$.
Moreover, for $i \ge m$, Remark~\ref{2.22}\ref{transform products}
implies that  $(a R_m)^{R_i} = (uu' R_m)^{R_i} P_1^{R_i} \cdots P_r^{R_i}$.
Since $u, u' \in T^{\times}$, it follows from (2) that  $(u u' R_m)^{R_i} = R_i$ for $i \gg m$.
Hence $(a R_m)^{R_i} = P_1^{R_i} \cdots P_r^{R_i} R_i$ for $i \gg m$.
\end{proof}

%

\section{Asymptotic behavior of the order valuations}  \label{sec4}

Assume notation as in  Setting~\ref{setting 1}, so in particular, fix $x \in S$ such that $xS$ is $N$-primary.
In this section we analyze the limit 
  \begin{equation}\label{eq44}  
	\lim_{n \rightarrow \infty} ~~\frac{\ord_n (a)}{\ord_n (x)}
\end{equation} 
for nonzero elements $a \in F$.
This limit plays a key role in our description of Shannon extensions. 
If  $S$ is an archimedean Shannon extension,  we show in Section~\ref{section 4} that  
 the limit given in Equation~1  defines the rank 1 valuation overring of the boundary valuation ring of $S$. 
 If $S$ is a non-archimedean Shannon extension,  we show in Section~\ref{section 6} that the limit 
 given in Equation~1 induces a rational rank 1 valuation on a certain homomorphic image $S/P$ of $S$.

For an ideal  $I$  of $R$,     
Remark~\ref{GR lemma}(3) implies the sequence of nonnegative integers $\ord_{R_i} (I^{R_i})$ is nonincreasing and thus must converge. 
We use the following definition.

\begin{definition} \label{e def} {\rm
Assume Setting~\ref{setting 1} and let $a \in S$ be nonzero.  
Then $a \in R_n$ for some $n \ge 0$.  
Define $\displaystyle \edeg (a) = \lim_{i \rightarrow \infty} \ord_{n +i} ((a R_n)^{R_{n+i}})$.

For  $\frac{a}{b} \in F$, where $a, b$ are nonzero elements in $ S$, let $n \in \mathbb N$ be 
such that $a, b \in R_n$ and 
 define $\edeg (\frac{a}{b}) = \edeg (a) - \edeg (b)$.}
\end{definition}

For nonzero $a \in S$, $\edeg (a)$ is a finite non-negative integer.
A priori, $\edeg (a)$ depends on the starting point $R_n$ and $\edeg (\frac{a}{b})$ depends on the representation of $\frac{a}{b}$ as an element in $F$.
In Lemma~\ref{e facts}, we prove $\edeg (-)$ is independent of starting point and representation.

\begin{lemma}\label{e facts}
Assume Setting~\ref{setting 1}. Let $\edeg$  be as in Definition~\ref{e def}.  Then:
\begin{enumerate}

\item For nonzero $a \in F$, $\edeg (a)$ is well defined.

\item \label{e facts.2}    For each nonzero $a \in R_n \setminus T^\times$,  there exists a factorization 
$a R_n = u \, p_1 \cdots p_r$ in $R_n$, where $u \in R_n \cap T^{\times}$ and $p_1, \ldots, p_r$ are 
 not necessarily distinct
essential prime elements  of $S/R_n$. 
Then $\edeg (a) = \sum_{i=1}^{r} \edeg (p_i)$.

\item
For nonzero $a, b \in F$, $\edeg (a b) = \edeg (a) + \edeg (b)$.

\item\label{e facts 4}
For nonzero $a \in S$, $\edeg (a) = 0$ if and only if $a \in T^{\times}$. 


\end{enumerate}
\end{lemma}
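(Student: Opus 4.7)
The plan is to prove the items in the logical order: first well-definedness of $\edeg$ on elements of $S$ (part of (1)), then (2), then (3), then the remainder of (1), and finally (4). For $a \in R_n \subseteq R_m$, I compare the candidate limits as $i \to \infty$ of $\ord_{n+i}((aR_n)^{R_{n+i}})$ and $\ord_{m+i}((aR_m)^{R_{m+i}})$. Applying Remark~\ref{GR lemma}(2) to the ideal $aR_n$ gives $aR_m = \big(\prod_{j=n}^{m-1} x_j^{e_j}\big)(aR_n)^{R_m}$ with $e_j = \ord_j((aR_n)^{R_j})$. Multiplicativity of the transform (Remark~\ref{2.22}\ref{transform products}) together with its composition law (Remark~\ref{2.22}(1)) then yields $(aR_m)^{R_{m+i}} = \big(\prod_{j=n}^{m-1} x_j^{e_j}\big)^{R_{m+i}} \cdot (aR_n)^{R_{m+i}}$. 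Setting~\ref{setting 1}\ref{no proximity} forces $x_j \in T^\times$, so Proposition~\ref{4.4}(2) makes the prefactor equal to $R_{m+i}$ for $i$ sufficiently large; the two candidate sequences then coincide eventually, and $\edeg(a)$ is independent of the starting ring.

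Claim (2) follows almost identically: factor $aR_n = up_1 \cdots p_r$ as in Proposition~\ref{4.4}(3) and apply multiplicativity of the transform to obtain $(aR_n)^{R_{n+i}} = (uR_n)^{R_{n+i}} \prod_{j=1}^r (p_j R_n)^{R_{n+i}}$. Proposition~\ref{4.4}(2) kills the $u$-factor for large $i$, and additivity of $\ord_{n+i}$ on products of principal ideals passes to the limit. The same multiplicativity argument, applied to a common $R_n$ containing nonzero $a, b \in S$, proves $\edeg(ab) = \edeg(a) + \edeg(b)$ on $S$. This $S$-multiplicativity supplies the remainder of (1): if $a/b = c/d$ in $F$ with $a, b, c, d \in S$ nonzero, then $ad = bc$ in $S$ forces $\edeg(a) + \edeg(d) = \edeg(b) + \edeg(c)$, so the value $\edeg(a) - \edeg(b) = \edeg(c) - \edeg(d)$ is independent of representative. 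The extension of (3) from $S$ to $F$ is then immediate from the definition of $\edeg$ on quotients.

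For (4), one direction is immediate: if $a \in T^\times$, Proposition~\ref{4.4}(2) gives $(aR_n)^{R_m} = R_m$ for $m \gg n$, whence $\edeg(a) = 0$. Conversely, a nonzero $a \in S \setminus T^\times$ is a nonunit in some $R_n$, and (2) factors $aR_n = up_1 \cdots p_r$ with $r \ge 1$ essential prime factors $p_j$. For each $j$, Proposition~\ref{4.4}(1) makes $(p_j R_n)^{R_m}$ a proper nonzero principal ideal of the regular local ring $R_m$ for every $m \ge n$, so its generator is a nonunit and $\ord_m((p_j R_n)^{R_m}) \ge 1$; hence each $\edeg(p_j) \ge 1$, and (2) forces $\edeg(a) \ge r \ge 1$. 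The main obstacle is the first step: comparing the candidate sequences in (1) for $a \in S$ inevitably produces spurious $x_j$-prefactors, and it is precisely Setting~\ref{setting 1}\ref{no proximity}, applied through Proposition~\ref{4.4}(2), that makes them disappear in the limit.
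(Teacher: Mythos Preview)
Your proof is correct and follows essentially the same strategy as the paper: both reduce everything to the behavior of transforms of principal ideals and the factorization from Proposition~\ref{4.4}\ref{6.3}, using Proposition~\ref{4.4}\ref{4.4.3} to eliminate the $T^\times$ factors. The paper packages the independence-of-starting-point and the formula in (2) into a single citation of Proposition~\ref{4.4}\ref{4.4.4}, whereas you unwind that computation directly via Remark~\ref{GR lemma}(2) and multiplicativity of the transform; the content is the same.

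The one place where the paper is more efficient is item~(4): rather than going through the factorization and bounding $\edeg(p_j)\ge 1$, the paper observes that $\edeg(a)=0$ is equivalent to the nonincreasing nonnegative integer sequence $\ord_m((aR_n)^{R_m})$ being eventually zero, i.e.\ $(aR_n)^{R_m}=R_m$ for $m\gg n$, which is exactly the criterion for $a\in T^\times$ in Proposition~\ref{4.4}\ref{4.4.3}. Your argument via essential primes is also valid, just longer.
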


\begin{proof} 
 By Proposition~\ref{4.4}\ref{4.4.3} and~\ref{4.4.4}, for each $a \in S$,  $\edeg(a)$ is independent of the starting point $R_n$.  
Let $a \in S \setminus T^{\times}$ with $a$ nonzero.
Proposition~\ref{4.4}\ref{6.3} implies the factorization 
$a = u \tilde{a} = u p_1 \cdots p_r$ in $R_n$ as in the statement of (2), so we have $a R_n = u p_1 \cdots p_rR_n$. By Proposition~\ref{4.4}\ref{4.4.4}, $\edeg(a) = \edeg(p_1 \cdots p_r) = \edeg(p_1) + \cdots + \edeg(p_r)$, where the latter assertion follows from the fact that  $\ord_i(a) = \ord_i(p_1) + \cdots + \ord_i(p_r)$ for all $i$. This verifies (2). Item (3), as well as the fact that $\edeg(a)$ is well defined for all $a \in F$, now follow from (2) and the fact that $R_n$ is a UFD.  Item (4) is a consequence of Proposition\ref{4.4}\ref{4.4.3}.  
\end{proof}

\begin{remark}\label{e trivial case}
Assume Setting~\ref{setting 1}.
From Lemma~\ref{e facts}\ref{e facts.2} it follows that $\epd (S / R) = \emptyset$ if and only if $\edeg (a) = 0$ for all nonzero $a \in S$.
Thus as in Discussion~\ref{3.2}, $\edeg (a) = 0$ for all nonzero $a \in S$ if and only if $S$ is a rank $1$ valuation domain.

\end{remark}

\begin{lemma}\label{factorization}
Assume Setting~\ref{setting 1} and let $a, b \in S$ be nonzero.
For $n \gg 0$, in $R_n$, there exist factorizations $a = u \tilde{a}$ and $b = v \tilde{b}$, where $u, v \in R_n \cap T^{\times}$, $\tilde{a}, \tilde{b}$ are products of essential prime elements of $S/R_n$,  $\ord_{n}(\tilde{a}) = \edeg (a)$ and $\ord_n (\tilde{b}) = \edeg (b)$.
Furthermore, for any such factorization,
\begin{enumerate}
\item[{\rm (1)}] If $\ord_{n} (a) \ge \ord_{n} (b)$ for $n \gg 0$, then $v$ divides $u$ in $R_n$ for $n \gg 0$.
\item[{\rm (2)}] If $\ord_{n} (a) = \ord_{n} (b)$ for $n \gg 0$, then $v R_n = u R_n$ for $n \gg 0$, and $\edeg (a) = \edeg (b)$.
\end{enumerate}
\end{lemma}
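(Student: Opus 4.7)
The plan is to first produce the factorizations and then derive (1) by a prime-by-prime check, deducing (2) immediately from (1).

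For existence, I would fix $n_0$ with $a,b \in R_{n_0}$ and use Proposition~\ref{4.4}\ref{6.3} to factor $a = u^{(0)} p_1\cdots p_r$ and $b = v^{(0)} q_1 \cdots q_s$ in $R_{n_0}$, with $u^{(0)}, v^{(0)} \in R_{n_0}\cap T^\times$ and the $p_i, q_j$ essential prime elements. For $n \ge n_0$ the transform $p_i^{R_n}$ is a principal prime ideal of $R_n$ by Remark~\ref{2.22}\ref{transform primes} together with Remark~\ref{transform equals extension}; let $p_{i,n}, q_{j,n}$ be generators. By Remark~\ref{GR lemma}(3) each sequence $\ord_n(p_{i,n}) = \ord_n((p_iR_{n_0})^{R_n})$ is a nonincreasing sequence of positive integers with limit $\edeg(p_i)$, and therefore stabilizes for $n \gg 0$; I would choose $n$ past every stabilization point. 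Setting $\tilde a = \prod_i p_{i,n}$, Proposition~\ref{4.4}\ref{4.4.4} gives $(aR_{n_0})^{R_n} = \tilde a R_n$, and Remark~\ref{GR lemma}(2) then shows $\tilde a \mid a$ in $R_n$; write $u = a/\tilde a \in R_n$. Remark~\ref{transform equals extension} yields $aT = \tilde a T$, forcing $u \in T^\times$, and Lemma~\ref{e facts}\ref{e facts.2} gives $\ord_n(\tilde a) = \sum_i \edeg(p_i) = \edeg(a)$. Run the parallel construction for $b$.

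For (1), I would reduce to showing $u/v \in R_n$ for $n \gg 0$, which by normality of $R_n$ is equivalent to $\ord_P(u/v) \ge 0$ at every height 1 prime $P$. If $P$ is essential, the localization $(R_n)_P$ coincides with $T_{PT}$ (since $T$ is a localization of $R_n$ by Setting~\ref{setting 1}\ref{no proximity} and $PT$ is a height 1 prime of $T$ by Remark~\ref{transform equals extension}); as $u, v \in T^\times$ this gives $\ord_P(u/v) = 0$. If $P$ is non-essential, the products $\tilde a, \tilde b$ of essential primes of $R_n$ lie outside $P$, so $\ord_P(u) = \ord_P(a)$ and $\ord_P(v) = \ord_P(b)$, and the lemma reduces to showing $\ord_P(a) \ge \ord_P(b)$ at every non-essential $P$. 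Bridging the asymptotic hypothesis on $\ord_n$ with this prime-by-prime condition is the main obstacle. I would attack it through the explicit description from Remark~\ref{GR lemma}(2): in $R_n$, up to units, $u = \prod_{j=n_0}^{n-1} x_j^{\alpha_j}$ and $v = \prod_{j=n_0}^{n-1} x_j^{\beta_j}$ with $\alpha_j = \ord_j((aR_{n_0})^{R_j})$ and $\beta_j = \ord_j((bR_{n_0})^{R_j})$. The hypothesis then expands to
\[
\sum_{j=n_0}^{n-1}(\alpha_j - \beta_j)\,\ord_n(x_j) \;\ge\; \edeg(b) - \edeg(a) \quad \text{for all } n \gg 0,
\]
and the remaining task is to combine this inequality with the multiplicative relations among the $x_j$ in $R_n$ (notably $x_{j+1}\mid x_j$ in $R_{j+2}$, which follows from $x_j \in \m_{j+1}R_{j+2} = x_{j+1}R_{j+2}$) to conclude that $u/v = \prod x_j^{\alpha_j - \beta_j}$ lies in $R_n$.

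Part (2) then follows by applying (1) in both directions: the equality $\ord_n(a) = \ord_n(b)$ for $n \gg 0$ gives both $v \mid u$ and $u \mid v$ in $R_n$, hence $uR_n = vR_n$ and $\ord_n(u) = \ord_n(v)$. The factorizations $a = u\tilde a$ and $b = v\tilde b$ then yield $\edeg(a) = \ord_n(a) - \ord_n(u) = \ord_n(b) - \ord_n(v) = \edeg(b)$.
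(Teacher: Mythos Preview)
Your existence argument and your derivation of (2) from (1) are correct and match the paper. The gap is in (1), and you flag it yourself: after reducing to the inequality $\sum_j(\alpha_j-\beta_j)\,\ord_n(x_j)\ge \edeg(b)-\edeg(a)$ for all large $n$, you say ``the remaining task'' is to combine this with divisibility relations among the $x_j$ to conclude $\prod_j x_j^{\alpha_j-\beta_j}\in R_n$. This is a genuine obstacle, not a routine step. The inequality constrains only $\ord_n$, the valuation at the maximal ideal, whereas membership in $R_n$ requires nonnegativity at \emph{every} non-essential height~1 prime. The exponents $\gamma_j=\alpha_j-\beta_j$ can be negative for finitely many $j$, and the prime factorizations of the $x_j$ in $R_n$ interact in ways that the single relation $x_{j+1}\mid x_j$ does not control. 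There is no evident mechanism by which the family of scalar inequalities, one for each large $n$, pins down the local data at each height~1 prime.

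The paper takes a different route that avoids this entirely. After factoring out $\gcd(a,b)$ to reduce to $a,b$ coprime (so that it suffices to show $v$ is a \emph{unit}), it tracks the transforms $Q_i=(a_i,b_i)R_i$ of the ideal $Q_0=(a,b)R_0$. The hypothesis gives $\ord_i(Q_i)=\min(\ord_i(a_i),\ord_i(b_i))=\ord_i(b_i)$ for $i\gg 0$; after re-indexing so that this holds for all $i$ and equals the stable value $e=\lim_i\ord_i(Q_i)$, one gets $b_iR_i=(bR_0)^{R_i}$ for every $i$. Writing $b=v\tilde b$ and using multiplicativity of transforms, $\ord_i(b_i)=\ord_i(v_i)+\ord_i(\tilde b_i)$ with both summands nonincreasing in $i$, their sum constantly $e$, and $\ord_i(v_i)\to 0$ since $v\in T^\times$. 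This forces both summands constant, hence $\ord_0(v)=0$. The key idea you are missing is to pass through the transform of the \emph{pair} ideal $(a,b)$ rather than attempting a prime-by-prime analysis inside a fixed $R_n$.
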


\begin{proof}
By replacing $R$ with $R_n$ for some sufficiently large $n$, we may assume that $a, b \in R$.
As in Proposition~\ref{4.4}\ref{6.3}, we may write $a = u \tilde{a}$ and $b = v \tilde{b}$, where $\tilde{a}, \tilde{b}$ are the products of essential primes of $S / R$ and $u, v \in R \cap T^{\times}$.
By again replacing $R$ with $R_n$ for sufficiently large $n$, we may assume that $\ord_{0} (\tilde{a}) = \edeg (a)$ and $\ord_{0} (\tilde{b}) = \edeg (b)$ as in Lemma~\ref{e facts}(2).

(1) Assume that $\ord_{n} (a) \ge \ord_{n} (b)$ for $n \gg 0$.
By factoring out their greatest common divisor in $R$, we may assume $a, b$ are relatively prime in $R$.   It suffices to show $v$ is a unit in $R$. 
We proceed as in the proof of \cite[Lemma 5.2]{HLOST}.
Denote $a_0 = a$, $b_0 = b$, $Q_0 = (a_0, b_0) R_0$, and let $Q_i = (a_i, b_i)$ be the transform of $Q_0$ in $R_i$, so $Q_{i} = \m_i^{\ord_{i} Q_i} Q_{i+1}$.
Let $e = \lim_{n \rightarrow \infty} \ord_{n} (Q_n)$.  Since $\ord_n(a) \ge \ord_n(b)$ for $n\gg 0$, we have
 $e = \lim_{n \rightarrow \infty} \ord_{n} (b_n)$.
By replacing $R$ with $R_n$ for some $n \gg 0$, 
we may assume that for all $i \ge 0$, $\ord_{i} (b_i) = e$.
Thus $b_i R_i$ is the transform in $R_i$ of the principal ideal $b_0 R_0$.

Consider the factorization $b = v \tilde{b}$ as above.
Let $v_i R_i$, $\tilde{b}_i R_i$ be the transforms in $R_i$ of $v R_0$, $\tilde{b} R_0$, respectively, so that $b_i R_i = v_i \tilde{b}_i R_i$.
By Remark~\ref{GR lemma}\ref{sequence nonincreasing}, $\ord_{i} (\tilde{b}_i)$ and $\ord_{i} (v_i)$ are both nonincreasing.
Since $v \in T^{\times}$,  we have  $v_i R_i = R_i$ for $i \gg 0$, 
and  $\ord_i (v_i) = 0$ for $i \gg 0$.
Thus $\ord_i (\tilde{b}_i) = e$ for $i \gg 0$.
Since $e \ge \ord_{0} (\tilde{b}) \ge \ord_i (\tilde{b}_i) = e$, we conclude that $\ord_{0}(v) = 0$; that is, $v$ is a unit in $R$, and thus divides $u$ in $R$.

(2) This follows by applying (1) twice.
\end{proof}

\begin{corollary}\label{e value 0}
Assume Setting~\ref{setting 1}.
If $a \in F^\times$ is such that $\edeg (a) = 0$, then there exists $u \in T^\times$ such that $\ord_n (a) = \ord_n (u)$ for all $n \gg 0$.
\end{corollary}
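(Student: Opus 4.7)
The plan is to write $a = b/c$ with $b, c \in S \setminus \{0\}$ and locate the target unit $u \in T^\times$ as the ratio of the $T^\times$-parts of the factorizations of $b$ and $c$ provided by Proposition~\ref{4.4}\ref{6.3}. Additivity of $\edeg$ (Lemma~\ref{e facts}(3)) together with the hypothesis $\edeg(a) = 0$ forces $\edeg(b) = \edeg(c)$. Choose $n$ large enough that $b, c \in R_n$, and write $b = u_1 \tilde{b}$, $c = u_2 \tilde{c}$ in $R_n$ with $u_1, u_2 \in T^\times \cap R_n$ and $\tilde{b}, \tilde{c}$ (possibly empty) products of essential prime elements of $S/R_n$; then $\edeg(\tilde b) = \edeg(b) = \edeg(c) = \edeg(\tilde c)$ because $\edeg(u_i) = 0$ by Lemma~\ref{e facts}\ref{e facts 4}. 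Set $u := u_1/u_2 \in T^\times$. Since $\ord_m(a) - \ord_m(u) = \ord_m(\tilde b) - \ord_m(\tilde c)$, the problem reduces to proving $\ord_m(\tilde b) = \ord_m(\tilde c)$ for all $m \gg 0$.

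The key step is to enlarge $n$ further so that the nonincreasing nonnegative integer sequences $\ord_j((\tilde{b} R_n)^{R_j})$ and $\ord_j((\tilde{c} R_n)^{R_j})$ from Remark~\ref{GR lemma}\ref{sequence nonincreasing} have \emph{both} stabilized at their respective limits $\edeg(\tilde b)$ and $\edeg(\tilde c)$ for every $j \geq n$; since these limits coincide, call the common stable value $e$. For any $m \geq n$, Remark~\ref{GR lemma}(2) applied to the ideal $\tilde{b} R_n$ along the chain $R_n \subseteq R_m$ gives
\begin{equation*}
\tilde{b} R_m \;=\; \left(\prod_{j=n}^{m-1} \m_j^{\,e}\right) (\tilde{b} R_n)^{R_m},
\end{equation*}
and since $\ord_m((\tilde{b} R_n)^{R_m}) = e$ as well, applying $\ord_m$ yields
\begin{equation*}
\ord_m(\tilde{b}) \;=\; e \cdot \bigl(1 + \sum_{j=n}^{m-1} \ord_m(\m_j)\bigr).
\end{equation*}
The identical computation for $\tilde{c}$ produces the same bracketed expression and the same coefficient $e$, hence $\ord_m(\tilde{b}) = \ord_m(\tilde{c})$ on the nose for every $m \geq n$.

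Combining these facts, for every $m \geq n$ we have
\begin{equation*}
\ord_m(a) \;=\; \ord_m(u_1) + \ord_m(\tilde{b}) - \ord_m(u_2) - \ord_m(\tilde{c}) \;=\; \ord_m(u),
\end{equation*}
which is the claimed equality. The main obstacle to anticipate is that we need $\ord_m(\tilde{b}) = \ord_m(\tilde{c})$ pointwise (for all $m \geq n$), not merely asymptotically; this is what forces one to exploit the monotonicity and integrality in Remark~\ref{GR lemma}\ref{sequence nonincreasing} to stabilize both transform sequences \emph{simultaneously} from a single common index $n$, and to observe in the Remark~\ref{GR lemma}(2) expansion that the exponents of the $\m_j$ depend only on the common stable value $e$, not on which product of essential primes is being tracked.
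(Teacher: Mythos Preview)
Your argument is correct and follows the same approach as the paper's proof: write $a=b/c$, factor $b=u_1\tilde b$ and $c=u_2\tilde c$ via Lemma~\ref{factorization}, and take $u=u_1/u_2$. The paper simply asserts that $\ord_n(u)=\ord_n(a)$ for $n\gg 0$, whereas you have spelled out the reason by computing $\ord_m(\tilde b)=\ord_m(\tilde c)$ via Remark~\ref{GR lemma}(2); one minor redundancy is that you need not ``enlarge $n$ further,'' since the factorization of Lemma~\ref{factorization} already guarantees $\ord_n(\tilde b)=\edeg(b)$ and $\ord_n(\tilde c)=\edeg(c)$, and the nonincreasing sequences therefore stabilize immediately from that same $n$.
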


\begin{proof}
Write $a = \frac{b}{c}$ with $b, c \in S$ and apply Lemma~\ref{factorization} to obtain $b = u \tilde{b}$ and $c = v \tilde{c}$.
Then $\frac{u}{v} \in T^{\times}$ and $\ord_n (\frac{u}{v}) = \ord_n (a)$ for all $n \gg 0$.
\end{proof}

We prove in Theorem~\ref{limit exists} that the limit described in Equation~\ref{eq44} exists.

\begin{theorem}\label{limit exists}
Assume  notation as in Setting~\ref{setting 1}.
For  $a \in F^{\times}$   the (possibly infinite) limit
	$$\lim_{n \rightarrow \infty}  ~~\frac{\ord_n (a)}{\ord_n (x)}$$
exists.
\end{theorem}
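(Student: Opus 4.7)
The plan is to invoke Lemma~\ref{compare} applied to elements of the form $a^m x^{-k}$ and then run a Dedekind-cut argument. Since $xS$ is $N$-primary, $x$ lies in $N$, hence in $\m_n$ for all sufficiently large $n$ (in a sequence of local quadratic transforms, $\m_n \subseteq \m_{n+1}$ because $\m_n R_{n+1}$ is the principal ideal generated by a regular parameter of $R_{n+1}$), so $\ord_n(x) \ge 1$ for $n \gg 0$. Thus the ratio $\ord_n(a)/\ord_n(x)$ is a well-defined rational number for large $n$.

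For each rational $r = k/m$ with $m > 0$, apply Lemma~\ref{compare} to the nonzero element $a^m x^{-k} \in F$. Since $\ord_n$ is a valuation and $\ord_n(x) > 0$ eventually, the three cases of Lemma~\ref{compare} translate cleanly into the trichotomy: for $n \gg 0$, exactly one of
\[
\ord_n(a)/\ord_n(x) > r, \quad \ord_n(a)/\ord_n(x) = r, \quad \ord_n(a)/\ord_n(x) < r
\]
holds. Partition $\SQ$ accordingly into sets $U^+$, $U^=$, and $U^-$.

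The rest is a routine Dedekind-cut analysis. First, $U^=$ contains at most one rational, since two distinct values would both have to coincide eventually with the single ratio. Second, $U^+$ is downward closed and $U^-$ is upward closed: if eventually the ratio exceeds $r$, it exceeds every smaller rational. If $U^= = \{L\}$ for some $L \in \SQ$, then every rational less than $L$ sits in $U^+$ and every rational greater than $L$ sits in $U^-$, which forces the ratio to be eventually equal to $L$. If $U^= = \emptyset$, then $(U^+, U^-)$ is a Dedekind cut in $\SQ$ specifying a unique $L \in \SR \cup \{-\infty, +\infty\}$, and bracketing $L$ by rationals from above and below yields $\ord_n(a)/\ord_n(x) \to L$ (the cases $L = \pm\infty$ correspond to $U^- = \emptyset$ or $U^+ = \emptyset$ respectively).

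I do not expect a substantive obstacle: the real content is already concentrated in Lemma~\ref{compare}, and what remains is the standard translation of a trichotomy on rationals into an extended-real limit. The only minor point needing care is confirming $\ord_n(x) > 0$ for all large $n$, which is secured by the $N$-primary hypothesis on $xS$ together with the monotonicity of the $\m_n$.
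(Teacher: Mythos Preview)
Your proof is correct and follows essentially the same approach as the paper: both apply Lemma~\ref{compare} to elements $a^m x^{-k}$ to obtain, for each rational $r$, an eventual comparison between $\ord_n(a)/\ord_n(x)$ and $r$, and then assemble these into a Dedekind cut whose supremum/infimum is the desired limit. The paper's version is slightly terser (it folds your $U^+$ and $U^=$ into a single set $A$ defined by the eventual inequality $\ge r$), but the content is identical.
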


\begin{proof}

We construct a Dedekind cut as follows:
	$$A ~ =  ~\left\{ \frac{p}{q} \in {\mathbb{Q}} ~ \mid  ~\frac{\ord_n (a)}{\ord_n (x)} ~ \ge ~   \frac{p}{q} ~ \text{ for } n \gg 0 \right\}.$$
	$$B ~=  ~\left\{ \frac{p}{q} \in {\mathbb{Q}}  ~\mid  ~\frac{\ord_n (a)}{\ord_n (x)} ~ <  ~\frac{p}{q} ~ \text{ for } n \gg 0 \right\}.$$
For ${p}/{q} \in {\mathbb{Q}}$ (assuming $q > 0$), $p/q \in A$ is equivalent to $\ord_{n} (a^q) \ge \ord_{n} (x^p)$ for $n \gg 0$, and ${p}/{q} \in B$ is equivalent to $\ord_{n} (a^q) < \ord_{n} (x^p)$ for $n \gg 0$.
By the construction of $A$ and $B$, it follows that for $r \in A$, $r < s$ for all $s \in B$.
By Lemma~\ref{compare}, we conclude that $A \cup B = {\mathbb{Q}}$.
Thus the limit in the statement of the theorem is equal to $\sup A = \inf B$.
\end{proof}

In view of Theorem~\ref{limit exists}, we define a function $w$ as follows:

\begin{notation}\label{w-function}{\rm
 Assume Setting~\ref{setting 1} and 
define  $w : F \rightarrow {\mathbb{R}} ~  \cup   ~  \{-\infty,~+ \infty \}$ by   
defining  $w (0) = +\infty$, and for each $q \in F^{\times}$,  
	$$
	w (q) ~ = ~  \lim_{n \rightarrow \infty} \frac{\ord_n (q)}{\ord_n (x)}.
	$$}
\end{notation}

\begin{remark}  \label{w valuation}
{\rm Assume Setting~\ref{setting 1}.
Since $w$ is the limit of valuations, $w$ behaves like a valuation.
In particular, for elements 
 $a, b \in F$,  we have:
\begin{enumerate}
\item
$w (a + b) \ge \min \{ w (a), w (b) \}$, and $w (a + b) = \min \{ (w (a), w (b) \}$ if $w (a) \ne w (b)$.
\item
$w (a b) = w (a) + w (b)$, except in the case where one value is $+\infty$ and the other is $-\infty$.
\item
If $a \ne 0$, then $w (a) = - w (\frac{1}{a})$.
\item
If $A$ is a subring of $F$ such that $w (a) \ne - \infty$ for each $a \in A$, then $$P = \{ a \in A \mid w (a) = + \infty \}$$ is a prime ideal of $A$.
If in addition there exists a nonzero $a \in A$ with $w (a) \ne 0$, then $w$ 
induces a rank $1$ valuation  $w'$  on the quotient field of $A / P$ such that $w(a) = w'(a')$,
where $a'$ is the image of $a$ in $A/P$.
\end{enumerate}}
\end{remark}
\begin{proof}
Items 1, 2, and 3 follow from the fact that $w$ is a limit of order valuations.
For item~4, see \cite[Remark~2, p.~387 and Prop.~4, p.~388]{B}.
Since $a'$ is the coset $a + P$ and the elements in $P$ have $w$ value $+\infty$, we have 
$w(a) = w'(a')$.  
\end{proof}

We establish the basic properties of $w$ with respect to the Shannon extension $S$.

\begin{theorem}\label{w properties}
Assume notation as in Setting~\ref{setting 1}.
\begin{enumerate}

\item
 If $a \in V$, then $w (a) \ge 0$.

\item\label{w properties 3}
If $a \in F$, then $w (a) > 0$ implies that $a \in \mathfrak{m}_V$.

\item\label{w properties 2}
If $a \in S$, then $w (a) > 0$ if and only if $a \in N (= \m_V \cap S)$.

\item
Let $a \in F^{\times}$ be such that $\edeg (a) = 0$.
Then $w (a)$ is finite, and $a \in V$ if and only if $w (a) \ge 0$.

\item  
For each $n \in \mathbb N$ and element $z \in \m_n$, we have
$$
 zR_{n+1} = \m_nR_{n+1} ~  \iff ~ w(z)~ =  ~ \min\{w(y) ~|~ y \in \m_n \}.
$$ 

\end{enumerate}
To summarize items 1, 2, and 3,
$$\{ a \in F \mid w (a) > 0 \} ~\subseteq~ \mathfrak{m}_V ~\subseteq~ V ~\subseteq~ \{ a \in F \mid w (a) \ge 0 \}$$
	$$N = \{ a \in S \mid w (a) > 0 \}.$$
\end{theorem}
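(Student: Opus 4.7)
The plan is to prove the five statements in the order (1), (2), (3), (4), (5), as each later item will use earlier ones.

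Items (1)--(3) follow from the definition of $w$ together with the hypothesis that $xS$ is $N$-primary. Since $x \in N$, domination forces $x \in \m_n$ for $n \gg 0$, so $\ord_n(x) \geq 1$ eventually; for $a \in V$ the inequality $\ord_n(a) \geq 0$ for $n \gg 0$ then gives $w(a) \geq 0$, proving (1). For (2), $w(a) > 0$ together with convergence forces $\ord_n(a)/\ord_n(x) \geq w(a)/2 > 0$ eventually, so $\ord_n(a) > 0$ eventually, placing $a$ in $\m_V$. In (3), the forward direction combines (2) with the identity $\m_V \cap S = N$ (from $V$ birationally dominating $S$); for the converse, since $xS$ is $N$-primary, any $a \in N$ satisfies $a^k \in xS$ for some $k$, and writing $a^k = xs$ with $s \in S$ gives $k\,\ord_n(a) \geq \ord_n(x)$ for $n \gg 0$, hence $w(a) \geq 1/k > 0$.

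For (4), I would invoke Corollary~\ref{e value 0} to replace $a$ by some $u \in T^\times$ with $\ord_n(a) = \ord_n(u)$ for $n \gg 0$, so that $w(a) = w(u)$ and $a \in V \iff u \in V$. By Theorem~\ref{hull}(2), write $u = s/t$ with $s, t \in S$ both $N$-primary. The relations $s^k \in xS$ and $x^l \in sS$ (and analogues for $t$) pin $\ord_n(s)/\ord_n(x)$ into a bounded interval $[1/k, l]$, so $w(s), w(t)$, and hence $w(u) = w(s) - w(t)$, are all finite. The forward implication is just (1). For the converse, suppose $w(u) \geq 0$ but $u \notin V$. Then $\ord_n(s) < \ord_n(t)$ for $n \gg 0$, so $t/s$ has positive $\ord_n$-value eventually and lies in $\m_V$; since $s \in T^\times$, also $t/s \in T$, so $t/s \in V \cap T = S$ by Theorem~\ref{hull}(1), and in fact $t/s \in \m_V \cap S = N$. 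Then (3) gives $w(t/s) > 0$, so $w(u) = -w(t/s) < 0$, contradicting $w(u) \geq 0$.

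For (5), because $\m_n R_{n+1} = x_n R_{n+1}$, each $y \in \m_n$ factors as $y = x_n r_y$ with $r_y \in R_{n+1} \subset V$; then (1) gives $w(r_y) \geq 0$, so $w(y) \geq w(x_n)$, establishing $\min\{w(y) \mid y \in \m_n\} = w(x_n)$. For $z \in \m_n$, the condition $zR_{n+1} = \m_n R_{n+1}$ is equivalent to $r_z \in R_{n+1}^\times$, i.e., $r_z \notin \m_{n+1}$; by the domination structure ($\m_m \cap R_{n+1} = \m_{n+1}$ for $m \geq n+1$ and $\m_m \subseteq \m_{n+1}$ for $m \leq n+1$), this is equivalent to $r_z \notin N$, and by (3) together with (1) applied to $r_z^{\pm 1} \in S$, this is in turn equivalent to $w(r_z) = 0$, i.e., $w(z) = w(x_n)$.

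The main obstacle is the converse of (4) when $w(u) = 0$, where (2) cannot be applied directly. The resolution exploits the $N$-primary factorization $u = s/t$ to place $t/s$ in $N$ via the intersection formula $S = V \cap T$, after which (3) supplies the strict positivity needed to produce a contradiction.
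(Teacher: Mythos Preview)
Your proof is correct and for items (1), (2), (3), and (5) proceeds essentially as the paper does.

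The only genuine difference is in the converse direction of item (4). The paper invokes \cite[Corollary~5.5]{HLOST}, which states that the principal $N$-primary ideals of $S$ are linearly ordered by inclusion; from comparability of $sS$ and $tS$ it then deduces $u \in S \iff w(u) \ge 0$. You bypass this external fact: assuming $u \notin V$, you use Lemma~\ref{compare} to get $\ord_n(t/s) > 0$ for $n \gg 0$, then the decomposition $S = V \cap T$ from Theorem~\ref{hull}(1) together with $s \in T^\times$ places $t/s$ in $N$, and item (3) yields $w(t/s) > 0$, contradicting $w(u) \ge 0$. This is a clean, self-contained alternative that avoids the appeal to \cite{HLOST}; the paper's route, on the other hand, makes explicit the structural reason (the linear ordering of $N$-primary principal ideals) behind the equivalence. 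Two minor points worth tightening: in (5), the parenthetical ``$r_z^{\pm 1} \in S$'' is not quite right when $r_z$ is a nonunit---what you actually use is (1) for $w(r_z) \ge 0$ and (3) for the equivalence $w(r_z) > 0 \iff r_z \in N$; and the step $w(z) = w(x_n) \iff w(r_z) = 0$ implicitly uses that $w(x_n)$ is finite, which follows from item (4) since $x_n \in T^\times$ under Setting~\ref{setting 1}\ref{no proximity}. The paper's proof of (5) leaves the latter point implicit as well.
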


\begin{proof}
For item 1, if $a \in V$, then $\ord_{n} (a) \ge 0$ for $n \gg 0$, so $w (a) \ge 0$.
For item 2, if $a \in F$ is such that $w (a) > 0$, then $\ord_n (a) > 0$ for $n \gg 0$, so $a \in \mathfrak{m}_V$.

To see item 3, let $a \in S$.
The ``only if'' implication follows item 2.
To see the ``if'' implication, assume that $a \in N$.
Since the ideal $x S$ is $N$-primary, there is a positive integer $r$ such that $a^r \in x S$, so $a^r / x \in R_n$ for $n \gg 0$.
Then $w (a^r / x) \ge 0$, so $r w (a) > w (x) = 1$, so $w (a) > \frac{1}{r} > 0$.
This proves item 3.

To see item 4, let $a \in F^\times$ be such that $\edeg (a) = 0$.
By Corollary~\ref{e value 0}, there exists $y \in T^\times$ such that $\ord_n (a) = \ord_n (y)$ for $n \gg 0$, so $w (a) = w (y)$ and $a \in V$ if and only if $y \in V$.
Thus to show item 4, we may assume $a = y \in T^\times$.
Theorem~\ref{hull}(2) implies that $a = {u}/{v}$, where $u, v \in N$ are $N$-primary elements of $S$.

By item 3, $w (u) > 0$.
Since $u S$ is $N$-primary, there exists a positive integer $s$ such that $x^s \in u S$, so by the same argument as in item 3, $w (u) \le s$.
Since $w (u)$ is positive and bounded, it is finite.
Similarly $w (v)$ is finite, so we conclude that $w (a) = w (u) - w (v)$ is finite.

The principal $N$-primary ideals are linearly ordered as a set under inclusion \cite[Corollary 5.5]{HLOST}, so the ideals $u S$ and $v S$ are comparable by inclusion.
From the multiplicativity of $w$ and finiteness of $w (u)$ and $w (v)$, we conclude that $a \in S$ if and only if $w (a) \ge 0$.
Since $a \in T^\times$ and $S = T \cap V$, it follows that $a \in V$ if and only if $w (a) = 0$.
This completes the proof of item 4.

For item 5, let $z \in \m_n$.
Then $z \in x_nR_{n+1} = \m_nR_{n+1}$, so we may write $z = x_n a$ for some $a \in R_{n+1}$, where $w(z) = w(x_n) + w(a)$.
We have $w (a) \ge 0$ by item 1, so $w (z) \ge w (x_n)$, where equality holds if and only if $w (a) = 0$.
Thus $w (x_n) = \min \{ w (y) ~|~ y \in \m_n \}$.
Furthermore, item 3 implies that $a$ is a unit in $R_{n+1}$ if and only if $w(a) = 0$.
We conclude that $z R_{n+1} = \m_n R_{n+1}$ if and only if $w (z) = \min \{ w (y) ~|~ y \in \m_n \}$
\end{proof}

The function $w$ can yield infinite values even restricted to nonzero elements of $S$.
We prove in Theorem~\ref{arch} that if $\dim S \ge 2$, then $S$ is archimedean if and only if $w$ takes only finite values on nonzero elements of $F$, in which case $w$ is a valuation.

In Theorem~\ref{w-values},   we give an alternate interpretation of the restriction of the 
function $w$  to $S$.

\begin{theorem}\label{w-values}
Assume Setting~\ref{setting 1}.
Let $a \in S$ be nonzero.
Then $a \in R_m$ for some $m \ge  0$.
Let $a_i R_i$ be the transform 
of $a R_m$ in $R_i$ for all $i \ge m$.
We have 
	$$w(a) ~ =  ~   \sum_{n=m}^{\infty} \ord_{n} (a_n) w (x_n)$$
where $\m_n R_{n+1} = x_n R_{n+1}$.
\end{theorem}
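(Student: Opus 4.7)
My plan is to isolate a finite identity relating $w(a)$ to the partial sums of the claimed series and to a remainder $w(a_n)$, then argue that the remainder tends to zero. By Remark~\ref{GR lemma}(2), $aR_n = \bigl(\prod_{i=m}^{n-1} x_i^{\ord_i(a_i)}\bigr) a_n R_n$ as ideals of $R_n$, so $a = u_n \prod_{i=m}^{n-1} x_i^{\ord_i(a_i)} a_n$ for some $u_n \in R_n^{\times}$. Because $u_n, u_n^{-1} \in R_n \subseteq S$ both have nonnegative $w$-value, $w(u_n) = 0$. Applying $w$, which is additive on nonzero products by Remark~\ref{w valuation}, yields
\[
w(a) \; = \; \sum_{i=m}^{n-1} \ord_i(a_i)\, w(x_i) \, + \, w(a_n) \qquad (\dagger)
\]
for every $n \ge m$. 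Since $a_n \in S$ gives $w(a_n) \ge 0$ and each $\ord_i(a_i)\, w(x_i) \ge 0$, the partial sums are nondecreasing and bounded above by $w(a)$; hence the series $\sum_{i=m}^{\infty} \ord_i(a_i) w(x_i)$ converges to some $L \in [0,+\infty]$ with $L \le w(a)$, and the theorem becomes the statement $\lim_n w(a_n) = 0$.

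I would first dispose of the easy cases. If $\edeg(a) = 0$, then by Proposition~\ref{4.4}(2), $a_n \in R_n^{\times}$ for $n \gg 0$, so $w(a_n) = 0$ eventually and the series is in fact a finite sum agreeing with $w(a)$. If $\edeg(a) = e \ge 1$ and $\tau = \sum_{i \ge 0} w(x_i) = +\infty$, then Lemma~\ref{e facts}(2) gives $\ord_i(a_i) = e$ for $i \gg 0$, so the series dominates $e \sum_{i \gg 0} w(x_i) = +\infty$; by $(\dagger)$, $w(a) \ge s_n \to +\infty$ as well, so both sides of the theorem equal $+\infty$.

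The main case is $\edeg(a) = e \ge 1$ with $\tau < \infty$; by Theorem~\ref{arch}, $S$ is archimedean and $w$ is a rank~$1$ valuation on $F$. Using Proposition~\ref{4.4}(3)--(4), writing $a = u\, p_1 \cdots p_r$ with $u \in T^{\times}$ and essential primes $p_j$ gives $a_n R_n = P_1^{R_n} \cdots P_r^{R_n}$ and $w(a_n) = \sum_j w(p_{j,n})$ for $n \gg 0$; thus it suffices to prove $w(p_n) \to 0$ for each essential prime $p$ with transform $p_n R_n = (pR_m)^{R_n}$. Since $\ord_n(p_n) = e$ eventually, the $R_n$-to-$R_{n+1}$ instance of $(\dagger)$ reads $w(p_n) = e\, w(x_n) + w(p_{n+1})$, which telescopes to $w(p_n) - w(p_N) = e \sum_{i=n}^{N-1} w(x_i)$ for $N > n$. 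Because $\tau < \infty$, letting $N \to \infty$ shows $\alpha := \lim_N w(p_N)$ exists and $w(p_n) = \alpha + e \sum_{i \ge n} w(x_i) \to \alpha$.

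The main obstacle is showing $\alpha = 0$, and I would do this by returning to the description of $w$ as a limit of order ratios. The key auxiliary fact is that $\ord_n(x) \to +\infty$ in the archimedean setting: otherwise $\ord_n(x)$ would stabilize at some finite $C_0$, forcing $xR_{n+1} = x_n^{C_0} R_{n+1}$ for $n \gg 0$ and hence $x_n^{C_0} R_{n+2} = x_{n+1}^{C_0} R_{n+2}$, which by the UFD structure of $R_{n+2}$ makes $x_n$ and $x_{n+1}$ associates; iterating makes $\{R_i\}$ height~$1$ directed in the sense of Discussion~\ref{3.2}, so that $S$ would be a rank~$2$ (hence non-archimedean) valuation, contradicting $\tau < \infty$. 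Granted this, dividing the integer identity $\ord_n(p) = \sum_{i=m}^{n-1} \ord_i(p_i) \ord_n(x_i) + \ord_n(p_n)$ by $\ord_n(x)$ and passing to the limit makes the last term $\ord_n(p_n)/\ord_n(x)$ vanish (since $\ord_n(p_n) = e$ eventually), and the remaining limit-sum interchange is justified by the pointwise convergence $\ord_n(x_i)/\ord_n(x) \to w(x_i)$ together with the summability $\tau < \infty$ controlling the tail. This gives $w(p) = L^{(p)} = \sum_{i=m}^{\infty} \ord_i(p_i) w(x_i)$, whence $\alpha = 0$, completing the proof.
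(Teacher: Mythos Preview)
Your identity $(\dagger)$ and the disposal of the cases $\edeg(a)=0$ and $\tau=\infty$ are fine and match the paper's setup. The gap is in the main case, at the limit--sum interchange you invoke in the last paragraph.

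Once you have $\frac{\ord_n(p)}{\ord_n(x)} = \sum_{i=m}^{n-1} e_i\,\frac{\ord_n(x_i)}{\ord_n(x)} + \frac{e}{\ord_n(x)}$ and $\ord_n(x)\to\infty$, you know the left side converges to $w(p)$ and the last term to $0$, so the middle sum converges to $w(p)$. Your task is to show this same sum converges to $L^{(p)}=\sum_i e_i\,w(x_i)$, and you appeal to ``pointwise convergence together with the summability $\tau<\infty$ controlling the tail.'' But the tail is not controlled by $\tau$: for fixed $N$ and $n>N$, the tail $\sum_{i=N}^{n-1} e_i\,\frac{\ord_n(x_i)}{\ord_n(x)}$ equals $\frac{\ord_n(p_N)-e}{\ord_n(x)}$, which converges to $w(p_N)=\alpha + e\sum_{i\ge N}w(x_i)$. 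So the limiting tail is $\approx\alpha$, not $\approx 0$, and your interchange is equivalent to the assertion $\alpha=0$ you are trying to prove. No form of dominated convergence or Tannery applies here without an independent bound on $\frac{\ord_n(x_i)}{\ord_n(x)}$ that you do not have. (Your argument for $\ord_n(x)\to\infty$ is also not quite right---bounded does not mean stabilizing, and the $x_n$ are units in $T$ so cannot witness a height-$1$ directed sequence---though the claim itself is true: if $\ord_{n_k}(x)\le C$ along a subsequence, then $w(x_i)=\lim_k \frac{\ord_{n_k}(x_i)}{\ord_{n_k}(x)}\ge \frac{1}{C}$ for every $i$, contradicting $\tau<\infty$.)

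The paper avoids this circularity entirely. After obtaining $w(a)\ge\sigma$ from your $(\dagger)$, it proves $w(a)\le\sigma$ by a divisibility argument: for any rational $p/q<w(a)$ one has $\ord_n(a^q)>\ord_n(x^p)$ for $n\gg0$, and Lemma~\ref{factorization} (with $\edeg(x^p)=0$) forces $x^p\mid a^q$ in $R_n$; since $a_n$ and $x$ share no prime factors in $R_n$, in fact $x^p\mid\bigl(\prod_{i=m}^{n-1}x_i^{\ord_i(a_i)}\bigr)^q$, whence $p=w(x^p)\le q\sigma$. This yields $p/q\le\sigma$ for all such $p/q$, so $w(a)\le\sigma$. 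That factorization step is the missing idea in your approach.
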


\begin{proof}
By Theorem~\ref{limit exists}, the possibly infinite limit exists.
From Setting~\ref{setting 1}\ref{no proximity}, we have $x_n \in T^{\times}$ for all $n \ge 0$.

For $n \ge m$, using Remark~\ref{GR lemma}, we have $a R_n = \left( \prod_{i=m}^{n-1} x_i^{\ord_{i} (a_i)} \right) a_n R_n$.
Thus for all $n \ge m$ and for all $j \ge 0$,
$$\ord_j (a) = ~ \left( \sum_{i=m}^{n-1} \ord_i (a_i) \ord_j (x_i) \right) ~ + ~  \ord_j (a_n).$$
Dividing both sides by $\ord_j (x)$,
$$\frac{\ord_j (a)}{\ord_j (x)} = ~ \left( \sum_{i=m}^{n-1} \frac{\ord_i (a_i) \ord_j (x_i)}{\ord_j (x)} \right) ~ + ~  \frac{\ord_j (a_n)}{\ord_j (x)}.$$
Taking the limit as $j \rightarrow \infty$ on both sides and applying Theorem~\ref{limit exists} on the middle terms,
$$\lim_{j \rightarrow \infty} \frac{\ord_j (a)}{\ord_j (x)}  ~ = ~ \left( \sum_{i=m}^{n-1} \ord_i (a_i) w (x_i) \right) ~ + ~  \lim_{j \rightarrow \infty} \frac{\ord_j (a_n)}{\ord_j (x)}.$$
So it follows that,
$$\lim_{j \rightarrow \infty} \frac{\ord_j (a)}{\ord_j (x)}  ~ \ge  ~ \sum_{i=m}^{\infty} \ord_i (a_i) w (x_i).$$
Let $\sigma :=  \sum_{i=m}^{\infty} \ord_i (a_i) w (x_i).$
If $\edeg (a) = 0$, then $\ord_i(a_i) = 0$ for $i \gg 0$, so that  
 the sum is finite and the proof is complete by additivity of $w$ as in Remark~\ref{w valuation}.
If $\sigma = \infty$, the limit is $+\infty$ and there is nothing to show.
Assume that $\sigma < \infty$ and $\edeg (a) > 0$.
Let ${p}/{q}$ be any rational number such that the limit in the left hand side of the above inequality  is greater than ${p}/{q}$. Then for $n \gg 0$, $\ord_{n} (a^q) > \ord_{n} (x^p)$.
By Lemma~\ref{factorization}, since $\edeg (x^p) = 0$, it follows that $x^p$ divides $a^q$ in $R_n$ for $n \gg 0$.
But for $n \gg 0$, $a_n$ is a product of essential prime elements by Proposition~\ref{4.4}\ref{4.4.4}, so that $a_n, x$ have no common factors in $R_n$. Thus since by Remark~\ref{GR lemma}(2),  
$a R_n = \left( \prod_{i=m}^{n-1} x_i^{\ord_{i} (a_i)} \right) a_n R_n$,  $x^p$ divides $(\prod_{i=m}^{n-1} x_i^{\ord_{i} (a_i)})^{q}$, so $p = w (x^p) < q \sigma$. Hence  $\frac{p}{q} < \sigma$, and this completes the proof  of   the theorem.  
\end{proof}

In view of Theorem~\ref{w-values}, we single out an invariant $\tau$ naturally associated with the sequence $\{ R_n \}_{n=0}^{\infty}$.

\begin{notation}\label{tau} {\rm
With $w$ as in Notation~\ref{w-function}, we define 	
	 $\tau = \sum_{n=0}^{\infty} w (x_n)$
where $\m_n R_{n+1} = x_n R_{n+1}$ for $n \ge 0$.}
\end{notation}

The invariant $\tau$ relates the function $w$ to the function $\edeg$, c.f. Remark~\ref{w-values representation}.
In Section~\ref{section 4}, we prove for a Shannon extension $S$ of dimension at least $2$ that $\tau$ is finite if and only if $S$ is archimedean.
In Section~\ref{section 7}, we use $\tau$ to determine whether $S$ is completely integrally closed in the archimedean case.

By construction, $w (x) = 1$, so the image of $w$ contains rational values. 
In the case where $w$ also takes finite irrational values, the following proposition exhibits an explicit upper bound for $\tau$.

\begin{proposition}\label{rational rank}
Assume Setting~\ref{setting 1}.
Let $y_1, \ldots, y_r \in \m$, where $r \ge 2$.
If $w (y_1), \ldots, w (y_r)$ are finite and rationally independent, then
	$$\tau ~ \: \le \: ~  \frac{w (y_1) + \ldots + w (y_r)}{r - 1}.$$
\end{proposition}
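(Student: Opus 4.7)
The plan is to apply Theorem~\ref{w-values} to each $y_i$, sum over $i$, and then exploit the rational independence of the $w(y_i)$'s stage by stage. Let $a_{i,k}$ denote the transform of $y_iR_0$ in $R_k$ and set $N_k := \sum_{i=1}^r \ord_k(a_{i,k})$. Theorem~\ref{w-values} gives $\sum_{i=1}^r w(y_i) = \sum_{k\ge 0} N_k\,w(x_k)$, so the asserted inequality $(r-1)\tau \le \sum_i w(y_i)$ is equivalent to
$$\sum_{k\ge 0} w(x_k)\bigl(N_k-(r-1)\bigr)\ge 0. \qquad(\star)$$

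The central combinatorial estimate is $N_k \ge \max(r-k,0)$ for every $k\ge 0$. Indeed, if $a_{i,k}$ is a unit in $R_k$, then $a_{i,k}$ remains a unit in every $R_m$ with $m\ge k$, so $w(a_{i,k})=0$; the factorization $y_i = a_{i,k}\prod_{j<k} x_j^{\ord_j(a_{i,j})}$ then forces $w(y_i)$ to lie in $V_k$, the $\mathbb{Q}$-linear span of $\{w(x_0),\dots,w(x_{k-1})\}$, which has dimension at most $k$ over $\mathbb{Q}$. Rational independence of $w(y_1),\dots,w(y_r)$ therefore forbids more than $k$ of the $a_{i,k}$ from being units in $R_k$, and each of the remaining at least $r-k$ contributes at least $1$ to $N_k$.

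Two monotonicity properties will be combined with this stage-wise bound. First, $w(x_0)\ge w(x_1)\ge\cdots$: since $R_{k+1}$ dominates $R_k$ we have $\m_k\subseteq \m_{k+1}$, and Theorem~\ref{w properties}(5) identifies $w(x_k)$ with $\min\{w(y):y\in\m_k\}$, forcing the minimum to decrease as the set grows. Second, $(N_k)$ is itself nonincreasing by Remark~\ref{GR lemma}(3) applied to each $y_iR_0$. Setting $\Delta_k = N_k-N_{k+1}\ge 0$ and $\ell=\lim_k N_k$, summation by parts converts $\sum_k w(x_k)N_k$ into $\ell\,\tau+\sum_{j\ge 0}\Delta_j\,\tau_{j+1}$ with $\tau_{j+1}=\sum_{k\le j}w(x_k)$, and the stage-wise bound translates into the constraint $\sum_{j<k}\Delta_j\le N_0-\max(r-k,0)$, preventing the $\Delta_j$'s from being concentrated at small indices.

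The main obstacle is converting these pieces into the sign statement $(\star)$. The contributions at $k=0$ and $k=1$ are automatically nonnegative (since $N_0\ge r$ and $N_1\ge r-1$), but for $k\ge 2$ the individual terms $w(x_k)(N_k-(r-1))$ can be negative. The delicate step is to use the anti-front-loading constraint on the $\Delta_j$'s together with the monotonicity of $w(x_k)$ to show that the positive surplus at $k=0$ dominates the deficits at large $k$, bringing the full strength of rational independence to bear on the geometry of the quadratic-transform sequence.
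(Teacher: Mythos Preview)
Your approach has a genuine gap: the bound $N_k \ge \max(r-k,0)$, together with the two monotonicity properties, is \emph{not} sufficient to establish $(\star)$, and your final paragraph does not close this gap.

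Consider $r=2$ and the configuration $N_0=2$, $N_1=1$, $N_k=0$ for $k\ge 2$, which satisfies all of your constraints (including the ``anti-front-loading'' inequality $\sum_{j<k}\Delta_j \le N_0 - \max(r-k,0)$). In this case $(\star)$ reduces to $w(x_0) \ge \sum_{k\ge 2} w(x_k)$, and nothing in your list of constraints forces this; monotonicity of $(w(x_k))$ alone certainly does not. Concretely, this configuration arises when $w(y_1)=w(x_0)$ and $w(y_2)=w(x_0)+w(x_1)$, so rational independence of $w(y_1),w(y_2)$ is equivalent to that of $w(x_0),w(x_1)$. The reason the inequality \emph{does} hold is that $x_0,x_1\in\m_1$ still have rationally independent $w$-values, so one can apply the same reasoning \emph{again} starting from $R_1$; but your setup uses rational independence only once, at stage $0$, via the count of units among the $a_{i,k}$.

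This is exactly what the paper's proof supplies and yours does not: it inductively produces, at every level $n$, elements $y_1^{(n)},\dots,y_r^{(n)}\in\m_n$ whose $w$-values remain rationally independent (using Theorem~\ref{w properties}(5) to swap in $x_n$ and divide the others by it), thereby extracting a contribution of at least $(r-1)w(x_n)$ at \emph{each} step. Your transform-based bookkeeping only records how many of the original $y_i$ have trivialized by stage $k$, which loses the information needed to iterate. To repair your argument you would need a bound of the form $N_k \ge r-1$ for all $k$ (or an equivalent statement that keeps $r$ rationally independent values alive at every stage), and that essentially forces you back to the paper's inductive construction.
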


\begin{proof}
We argue as in the proof of \cite[Prop.~7.3]{HKT2}.
We inductively prove that for all $n \ge 0$, 
there are elements $y_1^{(n)}, \ldots, y_r^{(n)} \in \m_n$
 such that $w(y_1^{(n)}), \ldots, w(y_r^{(n)})$ are rationally independent and 
 \begin{equation}\label{rre1}
(r - 1) \left( \sum_{i=0}^{n - 1} w (x_i) \right) ~ +  ~\sum_{j=1}^{r} w (y_j^{(n)}) ~ \le ~ \sum_{j=1}^{r} w (y_j).
 \end{equation}
Taking  $y_j^{(0)} = y_j$, the base case $n = 0$ is clear. 
Assume the claim is true for $n$.
Thus  we have elements $y_j^{(n)} \in \m_n$ such that 
Equation~\ref{rre1} holds.

By Remark~\ref{w valuation},     $z \in \m_n$ has  minimal $w$-value if and only if 
$zR_{n+1} = \m_nR_{n+1}$.   Thus  the  set $w (x_n), w (y_1^{(n)}), \ldots, w (y_r^{(n)})$ has 
rational rank at least $r$.  
By re-ordering, without loss of generality   $w (x_n), w (y_2^{(n)}), \ldots, w (y_r^{(n)})$ are rationally independent.
Set $y_1^{(n+1)} = x_n$ and set $y_j^{(n+1)} = \frac{y_j^{(n)}}{x_n}$ for $2 \le j \le n$.
It follows that $w (y_1^{(n+1)}), \ldots, w (y_r^{(n+1)})$ are rationally independent.
Since $w (x_n) < w (y_j^{(n)})$ for $2 \le j \le n$, we have by Theorem~\ref{w properties}\ref{w properties 2} that  $y_j^{(n+1)} \in \m_{n+1}$ for $2 \le j \le n$.
We also have $w (x_n) \le w (y_1^{(n)})$.
Thus,
\begin{align*}
	(r - 1) \left( \sum_{i=0}^{n} w (x_i) \right) + \sum_{j=1}^{r} w (y_j^{(n+1)})
	\: = \: &  (r - 1) \left( \sum_{i=0}^{n - 1} w (x_i) \right) + (r - 1) w (x_n) + \\  & w (x_n) + \sum_{j=2}^{r} ( w (y_j^{(n)}) - w (x_n) )\\
	\: \le \: & (r - 1) \left( \sum_{i=0}^{n - 1} w (x_i) \right) + \sum_{j=1}^{r} w (y_j^{(n)}) \\
	\: \le \: & \sum_{j=1}^{r} w (y_j),
\end{align*}
where the last inequality is a consequence of Equation~\ref{rre1}.
The conclusion follows.
\end{proof}

\section{Archimedean Shannon extensions}  \label{section 4}

Theorem~\ref{arch}  shows 
  in the archimedean case that 
   the mapping $w$ defined in Notation~\ref{w-function} is a rank 1 valuation   whose valuation ring dominates $S$.  

\begin{theorem}   \label{arch}
Assume Setting~\ref{setting 1} and let  $w$ and $\tau$ be as in Notation~\ref{w-function} and 
Notation~\ref{tau}, respectively.  
If $\dim S \ge 2$, then the following are equivalent.
\begin{enumerate}
\item[{\rm (1)}] $S$ is archimedean.
\item[{\rm (2)}]
$w(q)$  is finite for all nonzero $q \in F$.
\item[{\rm (3)}] 
$w(q)$  is finite for some nonzero $q \in S \setminus T^{\times}$.
\item[{\rm (4)}]  $\tau < \infty$.
\item[{\rm (5)}] $w$ defines  a valuation on $F$ that dominates $S$.
\item[{\rm (6)}] $S$ is dominated by a   rank 1  valuation domain.\footnote{ An archimedian Shannon extension $S$ 
of $R$ is   often  birationally dominated by infinitely many rank 1 valuation domains. This is the case in \cite[Example~4.17]{Sha}.}
 \end{enumerate}

 
\end{theorem}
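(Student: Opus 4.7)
My plan is to prove the equivalences around the cycle $(4) \Rightarrow (5) \Rightarrow (6) \Rightarrow (1) \Rightarrow (4)$ and, as a separate loop, $(4) \Rightarrow (2) \Rightarrow (3) \Rightarrow (4)$. The main engine is Theorem~\ref{w-values}, whose formula $w(a) = \sum_{n=m}^{\infty}\ord_n(a_n)\,w(x_n)$ lets me translate freely between finiteness of $\tau$, finiteness of $w$-values, and archimedean behavior of $S$. One preliminary observation I will need: by Theorem~\ref{w properties}(5), once one takes $n$ large enough so that $x \in R_n$, one has $w(x_n) \le w(x) = 1$, so every individual summand of $\tau$ is finite and only the tail matters for convergence.

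For $(4) \Rightarrow (5)$: if $\tau < \infty$, then for any nonzero $a \in R_m \cap S$, the monotonicity $\ord_n(a_n) \le \ord_m(a)$ from Remark~\ref{GR lemma} combined with Theorem~\ref{w-values} gives $w(a) \le \ord_m(a)\cdot\tau < \infty$, and multiplicativity extends finiteness to $F^\times$. Then Remark~\ref{w valuation} upgrades $w$ to a bona fide valuation on $F$, while Theorem~\ref{w properties}(1),(3) show that its valuation ring dominates $S$. The step $(5) \Rightarrow (6)$ is immediate because $w$ takes real values. For $(6) \Rightarrow (1)$, if a rank one valuation $v$ dominates $S$, then $b \in \bigcap_n x^n S$ forces $v(b) \ge n\,v(x)$ for all $n$, hence $b=0$, and $S$ is archimedean by Remark~\ref{dim nonarch}.

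The crux is $(1) \Rightarrow (4)$, which I would prove by contrapositive. Assume $\tau = \infty$. Since $\dim S \ge 2$, Discussion~\ref{3.2} supplies an essential prime element $p$ of some $R_m$; by Proposition~\ref{4.4} its transforms satisfy $\ord_n(p_n) \ge 1$ for all $n \ge m$, so Theorem~\ref{w-values} yields $w(p) \ge \sum_{n=m}^{\infty} w(x_n) = \infty$. Now for each $k \ge 0$ the element $p/x^k$ lies in $T = S[1/x]$, and $w(p/x^k) = \infty > 0$, so by Theorem~\ref{w properties} one has $p/x^k \in \m_V \cap T \subseteq V \cap T = S$. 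Hence $p \in \bigcap_k x^k S \ne (0)$, and $S$ is non-archimedean. I expect this to be the main obstacle: converting the purely analytic divergence $\tau = \infty$ into a concrete element of $\bigcap_k x^k S$ requires the essential prime as a witness and the simultaneous use of $T = S[1/x]$ together with $\m_V \cap T \subseteq S$.

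Finally, the side cycle: $(4) \Rightarrow (2)$ uses the same bound as $(4)\Rightarrow(5)$ applied to $a/b$. For $(2) \Rightarrow (3)$, essential prime elements exist (Discussion~\ref{3.2}) and lie in $S \setminus T^\times$ by Lemma~\ref{e facts}\ref{e facts 4}, so (2) provides a witness with $w(p)$ finite. For $(3) \Rightarrow (4)$, let $q \in S \setminus T^\times$ with $w(q) < \infty$; Lemma~\ref{e facts}\ref{e facts 4} gives $\edeg(q) > 0$, so the nonincreasing integer sequence $\ord_n(q_n)$ eventually stabilizes at a positive constant $c$, and Theorem~\ref{w-values} then produces $w(q) \ge c\sum_{n \ge N} w(x_n)$, forcing the tail and hence $\tau$ to be finite.
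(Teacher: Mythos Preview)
Your proof is correct. The main difference from the paper is organizational: the paper runs a single cycle $(1)\Rightarrow(2)\Rightarrow(3)\Rightarrow(4)\Rightarrow(5)\Rightarrow(6)\Rightarrow(1)$, whereas you pivot everything through $(4)$. In particular, the paper never proves $(1)\Rightarrow(4)$ directly; instead its $(1)\Rightarrow(2)$ step argues that if $S$ is archimedean and $q\in N$, then $q\notin x^mS$ for some $m$, so $q/x^m\in T\setminus S = T\setminus V$, which forces $\ord_n(q)<m\,\ord_n(x)$ for $n\gg 0$ and hence $w(q)\le m$. Your contrapositive for $(1)\Rightarrow(4)$ is a genuinely different idea: you produce an explicit element $p$ with $w(p)=+\infty$ and then use $\{w>0\}\subseteq\mathfrak m_V$ together with $S=V\cap T$ to place $p/x^k$ in $S$ for all $k$. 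This is perhaps slightly longer, but it has the advantage of foreshadowing the non-archimedean theory (indeed it is essentially a special case of Theorem~\ref{nonarch ew}), and it makes transparent \emph{why} $\tau=\infty$ obstructs the archimedean property. The remaining implications $(4)\Rightarrow(5)$, $(5)\Rightarrow(6)$, $(6)\Rightarrow(1)$, $(2)\Rightarrow(3)$, $(3)\Rightarrow(4)$ match the paper's arguments essentially verbatim.
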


\begin{proof} (1) $\Rightarrow$ (2) 
Since $w$ is multiplicative, we may assume $q \in N$.
By Theorem~\ref{limit exists}, the limit that defines $w(q)$ exists, so $w(q)$ is finite if the sequence $\{{\ord_{n} (q)}/{\ord_{n} (x)}\}_{n=1}^\infty$ is bounded. 
Since $S$ is archimedean, there is some integer $m \ge 0$ such that $q \notin x^m S$.
Thus ${q}/{x^m} \notin S$. Since  $q/x^m \in T$,  and $S = V \cap T$
by Theorem~\ref{hull}(1),  , we conclude that $q/x^m \not \in V$. Thus 
$\ord_{n} (q) < \ord_{n} (x^m)$ for $n \gg 0$, which implies that ${\ord_{n} (q)}/{\ord_n (x)} < m$ for $n \gg 0$.


(2) $\Rightarrow$ (3) This is clear. 

(3) $\Rightarrow$ (4) 
Since $q \in S \setminus T^{\times}$, it follows from Lemma~\ref{e facts} that $\edeg (q) > 0$, where $\edeg$ is as in Definition~\ref{e def}. 
Theorem~\ref{w-values}  implies that $\edeg (q) \sum_{n \ge m} w (x_n) \le w (q) < \infty$ for some integer $m > 0$, so it follows that $\tau < \infty$.

(4) $\Rightarrow$ (5)
Theorem~\ref{w-values}  implies that     $w(q)$  is finite for all $q \in F^{\times}$.  Thus by Remark~\ref{w valuation},  $w$ 
defines a valuation  ring   that dominates $S$.

(5) $\Rightarrow$ (6) Since the valuation in (5) has values in ${\mathbb{R}}$, it has rank 1, so that (6) is clear. 

(6) $\Rightarrow$ (1) Let $U$ be a  rank 1 valuation domain  that dominates $S$, and let $x \in N$. Since $\dim U = 1$, we have $\bigcap_{n>0}x^nS \subseteq \bigcap_{n>0}x^nU = 0$, so (1) follows.
\end{proof}

\begin{definition} \label{6.13}   {\rm
In the case where $S$ is archimedean, we denote by $W$ the rank 1 valuation domain
defined by $w$. Notice that  $W$ dominates $S$.  }
\end{definition}

\begin{remark}\label{w-values representation}
{\rm Assume that $S$ is archimedean and let $a \in F^\times$.
By Theorem~\ref{w-values}, there exists $y \in T^{\times}$ such that $w (a) = w (y) + e (a) \tau$.}
\end{remark}

\begin{proof}
By Theorem~\ref{arch},  we have  $\tau < \infty$.
We may assume $a \in S$.   It follows that  $a \in R_m$ for some $m \ge 0$.  
As in the proof of Theorem~\ref{w-values},  let $a_i R_i$ be the transform of $a R_m$ in $R_i$ for all $i \ge m$, and let $\m_i R_{i+1} = x_i R_{i+1}$ for all $i \ge 0$.
Let $k \ge m$ be such that $\ord_i (a_i) = \edeg (a)$ for all $i \ge k$.
By Theorem~\ref{w-values},
	$$w (a) = \sum_{n \ge m} \ord_n (a_n) w (x_n).$$
Then
\begin{align*}
	w (a) 
		&= \sum_{n \ge 0} \edeg (a) w (x_n) - \sum_{0 \le n < m} \edeg (a) w (x_n) + \sum_{m \le n < k} (\ord_n (a_n) - \edeg (a) ) w (x_n) \\
		&= \edeg (a) \tau ~ - \sum_{0 \le n < m} \edeg (a) w (x_n) + \sum_{m \le n < k} (\ord_n (a_n) - \edeg (a) ) w (x_n)  \\
		&= \edeg (a) \tau + w \left( \prod_{0 \le n < m} x_n^{- \edeg (a) } \prod_{m \le n < k} x_n^{\ord_n (a_n) - \edeg (a)}\right).
\end{align*}
Then $y$ is a finite product of integer powers of the  elements $x_0, \ldots, x_{k-1} \in T^{\times}$, and 
we have  $w (a) = w (y) + e (a) \tau$.
\end{proof}

Using the mappings $w$ and $\edeg$,   Theorem~\ref{v-values}    gives  in 
the archimedean case an explicit description of the valuation associated to  the boundary valuation ring.

\begin{theorem}\label{v-values}
Assume Setting~\ref{setting 1}, let $V$ be the boundary valuation 
ring of $S$, and assume that $S$ is archimedean and $\dim S \ge 2$.
Consider the function 
\begin{alignat*}{4}
&v : ~ && F^{\times} &&\rightarrow~&&{\mathbb R} ~ \oplus ~ {\mathbb Z} \\
& ~ &&q &&\mapsto   &&( w (q) , - \edeg (q) ),
\end{alignat*}
where ${\mathbb R} \oplus {\mathbb Z}$ is ordered lexicographically.
Then $v$ is a valuation of $F$ that defines $V$.
It follows that $V$ has either rank $1$ or rank $2$.
\end{theorem}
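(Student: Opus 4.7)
The plan is to verify that $v$ is a valuation of $F$ whose valuation ring equals $V$; the rank bound then follows formally, since $V$ is a nontrivial valuation ring (birationally dominating $S$ with $\dim S \ge 2$) whose value group embeds into the rank-$2$ lexicographic group $\mathbb{R} \oplus \mathbb{Z}$. Multiplicativity of $v$ is immediate from the archimedean hypothesis: Theorem~\ref{arch} makes $w$ finite-valued, Remark~\ref{w valuation}(2) gives additivity of $w$, and Lemma~\ref{e facts}(3) gives additivity of $\edeg$. Once the equality $V = \{a \in F : v(a) \ge 0\} \cup \{0\}$ is established, the standard valuation-ring argument (given $v(a) \le v(b)$, the ratio $b/a \in V$, so $1 + b/a \in V$ forces $v(a+b) \ge v(a)$) yields the ultrametric inequality, making $v$ a valuation defining $V$.

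The equality $V = \{a \in F : v(a) \ge 0\} \cup \{0\}$ reduces, via Theorem~\ref{w properties}(1),(2), to the \emph{Key Claim}: for $a \in F^\times$ with $w(a) = 0$, one has $a \in V$ if and only if $\edeg(a) \le 0$. For the forward direction, I would write $a = b/c$ with $b, c$ nonzero in $R_n$ for $n$ sufficiently large, and use Lemma~\ref{factorization} to produce factorizations $b = u_b \tilde{b}$ and $c = u_c \tilde{c}$ in $R_n$ with $u_b, u_c \in R_n \cap T^\times$ and $\ord_n(\tilde{b}) = \edeg(b)$, $\ord_n(\tilde{c}) = \edeg(c)$. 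Applying Theorem~\ref{w-values} to $\tilde{b}$ and $\tilde{c}$, whose transforms from $R_n$ onward have the constant orders $\edeg(b)$ and $\edeg(c)$ respectively, yields $w(\tilde{b}) = \edeg(b)\, \sigma_n$ and $w(\tilde{c}) = \edeg(c)\, \sigma_n$, where $\sigma_n := \sum_{j \ge n} w(x_j)$ is a positive tail of $\tau$. The hypothesis $w(a) = 0$ then reads
\[
w(u_b/u_c) ~=~ (\edeg(c) - \edeg(b))\, \sigma_n.
\]
Since $a \in V$ gives $\ord_j(b) \ge \ord_j(c)$ for $j \gg 0$, Lemma~\ref{factorization}(1) forces $u_c \mid u_b$ in $R_n$, hence $u_b/u_c \in R_n \subseteq V$, and Theorem~\ref{w properties}(1) yields $w(u_b/u_c) \ge 0$. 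With $\sigma_n > 0$, this forces $\edeg(b) \le \edeg(c)$, i.e.\ $\edeg(a) \le 0$.

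For the reverse direction of the Key Claim, assume $w(a) = 0$ and $\edeg(a) \le 0$. Because $V$ is a valuation ring, either $a \in V$ (done) or $a^{-1} \in V$. In the latter case, applying the forward direction to $a^{-1}$ yields $\edeg(a^{-1}) \le 0$, which combined with the hypothesis $\edeg(a) \le 0$ forces $\edeg(a) = 0$; Theorem~\ref{w properties}(4) then gives $a \in V$ since $w(a) = 0$. The main technical obstacle is the forward direction of the Key Claim, whose essential step is Lemma~\ref{factorization}(1): it converts the \emph{asymptotic} inequality $\ord_j(b) \ge \ord_j(c)$ into the \emph{exact} divisibility $u_c \mid u_b$ at the finite stage $R_n$, which is precisely what turns the equation on $w$ into a sharp sign condition on $\edeg$ via the positive tail $\sigma_n$.
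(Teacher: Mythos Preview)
Your proof is correct and follows essentially the same route as the paper's. The core technical step---using Lemma~\ref{factorization} to factor $b = u_b\tilde b$, $c = u_c\tilde c$, then applying Theorem~\ref{w-values} to convert the equation $w(a)=0$ into $w(u_b/u_c) = (\edeg(c)-\edeg(b))\sigma_n$, and finally invoking Lemma~\ref{factorization}(1) to get $u_c\mid u_b$ and hence the desired sign on $\edeg$---is exactly the paper's argument (their $\alpha\beta$ and $\alpha$ are your $u_b$ and $u_c$, their $\tilde\tau$ is your $\sigma_n$). The only organizational difference is that the paper also proves the strict case $a/b \in \mathfrak m_V \Rightarrow v(a/b) > 0$ directly (via $\ord_n(\beta)>0 \Rightarrow w(\beta)>0$), whereas you obtain the reverse inclusion $\{v\ge 0\}\subseteq V$ by exploiting that $V$ is already known to be a valuation ring and invoking Theorem~\ref{w properties}(4); both routes are valid and of comparable length.
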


\begin{proof} From Lemma~\ref{e facts}(4) and  Theorem~\ref{arch} it follows that  $v(q_1q_2) = v(q_1) + v(q_2)$ for all $q_1,q_2 \in F^\times$. 
To prove that $v$ is a valuation, and that $v$ is the valuation associated to $V$, it suffices to show that if $a \in V$, then $v (a) \ge 0$, and if $a \in {\ff M}_V$, then $v (a) > 0$.


Let $a$ and $b$ be nonzero elements of $S$.  
Suppose ${a}/{b} \in V$. We prove that $v (a) \ge v (b)$.
By definition of $V$, $\ord_{n} (a) \ge \ord_{n} (b)$ for $n \gg 0$.
It follows that $w (a) \ge w (b)$, and if $w (a) > w (b)$, then $v (a) > v (b)$, so we may assume $w (a) = w (b)$.
By Lemma~\ref{factorization}, for a fixed large $n$, there exist in $R_n$ factorizations $a = \alpha \beta \tilde{a}$ and $b = \alpha \tilde{b}$, where $\ord_{n} (\tilde{a}) = \edeg (a)$ and $\ord_{n} (\tilde{b}) = \edeg (b)$.
Let $\tilde{\tau} = \sum_{i=n}^{\infty} w (x_i)$.
Theorem~\ref{w-values} implies that $w (\tilde{a}) = \edeg (a) \tilde{\tau}$ and $w (\tilde{b}) = \edeg (b) \tilde{\tau}$.
Since $w (a) = w (b)$, $w (\tilde{a}) + w (\beta) = w (\tilde{b})$.
Thus $\edeg (a) \tilde{\tau} + w (\beta) = \edeg (b) \tilde{\tau}$.
Since $w (\beta) \ge 0$, it follows that $\edeg (a) \le \edeg (b)$, so we conclude that $v (a) \ge v (b)$.

If in addition ${a}/{b} \in {\ff M}_V$, we prove that $v (a) > v (b)$.
Since $\ord_{n} (a) > \ord_{n} (b)$, $\ord_{n} (\beta) > 0$.
Thus $w (\beta) > 0$, so $\edeg (a) < \edeg (b)$, so $v (a) > v (b)$.
\end{proof}

The following is immediate.

\begin{corollary}\label{rank 1 overring}
Assume Setting~\ref{setting 1}, and assume  $S$ is archimedean with $\dim S \ge 2$.  
Then the valuation domain $W$ of Definition~\ref{6.13} is the rank $1$ valuation overring of the boundary valuation ring $V$,
and one of the following two statements holds. 
\begin{enumerate}
\item[{\rm (1)}]    There exist nonzero $a, b \in S$ such that $w(a) = w(b)$ and  $\edeg(a) \ne  \edeg(b)$. In
this case 
 $V$ has rank $2$.  
\item[{\rm (2)}]   For nonzero  $a, b \in S$  with   $w(a) = w(b)$,  we have $\edeg(a) = \edeg(b)$.  In this case
$V = W$.
\end{enumerate}
\end{corollary}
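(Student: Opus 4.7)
The plan is to derive both assertions directly from Theorem~\ref{v-values}, which provides that the boundary valuation ring $V$ is defined by $v = (w, -\edeg)$ with value group $G$ sitting inside $\mathbb{R} \oplus \mathbb{Z}$ under the lexicographic order. Under this order, the only proper nonzero convex subgroup of $\mathbb{R} \oplus \mathbb{Z}$ is $0 \oplus \mathbb{Z}$, so the only candidate for a proper valuation overring of $V$ corresponds to the convex subgroup $H := G \cap (0 \oplus \mathbb{Z})$ of $G$.

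First I would identify $W$ in the sense of Definition~\ref{6.13} with the rank~$1$ overring of $V$. The projection $\pi : \mathbb{R} \oplus \mathbb{Z} \twoheadrightarrow \mathbb{R}$ onto the first coordinate satisfies $\pi \circ v = w$, so $w$ is exactly the quotient valuation of $v$ by the convex subgroup $H$. Its valuation ring is therefore the overring of $V$ determined by $H$, and because $w(x) = 1$ the resulting value group $G/H$ is nonzero and embeds as a subgroup of $\mathbb{R}$, hence has rank~$1$. This gives the identification of the valuation ring of $w$ with the (unique) rank~$1$ valuation overring of $V$.

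For the dichotomy, the rank of $V$ equals $1$ if $H = 0$ and equals $2$ if $H \ne 0$. If (1) holds, then $q = a/b$ satisfies $v(q) = (0, \edeg(b) - \edeg(a))$, a nonzero element of $H$, so $V$ has rank~$2$. If (2) holds, then every $q \in F^\times$ with $w(q) = 0$, written as $a/b$ with $a,b \in S$, has $w(a) = w(b)$ and hence $\edeg(a) = \edeg(b)$, so $v(q) = 0$; thus $H = 0$, $V$ has rank~$1$, and combining with the previous paragraph forces $V = W$. Since the corollary is flagged as immediate, no genuine obstacle arises: the entire argument reduces to the combinatorics of convex subgroups of $\mathbb{R} \oplus \mathbb{Z}$ under the lex order together with the identity $w = \pi \circ v$.
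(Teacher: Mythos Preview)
Your argument is correct and matches the paper's intent: the corollary is stated as immediate from Theorem~\ref{v-values}, and your proof simply unpacks that immediacy via the convex-subgroup structure of $G \subseteq \mathbb{R} \oplus \mathbb{Z}$ under lex order and the identity $w = \pi \circ v$. There is nothing to add.
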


\section{The complete integral closure of an archimedean Shannon extension}\label{section 7}

Let $S$ be a Shannon extension of $R$ as in Setting~\ref{setting 1}.
If $S$ is non-archimedean, then the complete integral closure of $S$ is $S^{*} = T$; see Theorem~\ref{overview}.
The archimedean case is more subtle, and $S$ may or may not be completely integrally closed.
Theorem~\ref{almost integral} describes the complete integral closure of an archimedean Shannon extension.





\begin{theorem} \label{almost integral}
Assume Setting~\ref{setting 1} and let $w$  be as in Definition~\ref{6.13}.
Assume that $S$ is archimedean with $\dim S \ge 2$.
Let $y, a \in S$ be such that $a$ is nonzero and $yS$ is an $N$-primary ideal.  
Then the following are equivalent.
\begin{enumerate}
\item[{\rm (1)}] $\frac{a}{y} \not \in S$ and $\frac{a}{y}$  is almost integral over $S$.


\item[{\rm (2)}] $w (y) = w (a)$ and $aS$ is not $N$-primary.

\item[{\rm (3)}] $N = (y S :_S a)$.

 \end{enumerate}
Moreover, every element of $S^* \setminus S$ has the form $\frac{a}{y}$ for some $a, y$ with the stated properties.
\end{theorem}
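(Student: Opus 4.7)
The plan is to establish the three equivalences by proving (1)$\Leftrightarrow$(2) via direct valuation bookkeeping and then closing the cycle with (1)$\Rightarrow$(3)$\Rightarrow$(1). Throughout I would use that $yS$ being $N$-primary forces $y\in T^{\times}$ (some power of $x$ lies in $yS$, giving $y^{-1}\in S[1/x]=T$), so $a/y\in T$ automatically and $\edeg(y)=0$ by Lemma~\ref{e facts}\ref{e facts 4}.

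For (1)$\Leftrightarrow$(2), I would translate $a/y\in S^{*}\setminus S$ into $a/y\in W\setminus V$ via the decompositions $S^{*}=W\cap T$ (Theorem~\ref{cic arch}) and $S=V\cap T$ (Theorem~\ref{hull}). The valuation $v(q)=(w(q),-\edeg(q))$ of Theorem~\ref{v-values} records this as $w(a/y)\geq 0$ together with $v(a/y)<0$ lexicographically. The strict inequality $w(a/y)>0$ is ruled out by Theorem~\ref{w properties}\ref{w properties 3}, which would place $a/y$ in $\mathfrak{m}_{V}\subseteq V$; hence $w(a)=w(y)$ and $\edeg(a)>0$. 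Finally $\edeg(a)>0$ is equivalent to $a\notin T^{\times}$ by Lemma~\ref{e facts}\ref{e facts 4}, and in the present situation $a$ is necessarily a nonunit (since $w(a)=w(y)>0$), so this is equivalent to $aS$ being not $N$-primary. Each step reverses, giving (2)$\Rightarrow$(1).

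For (1)$\Rightarrow$(3), expanding $(a/y)N\subseteq N$ immediately gives $N\subseteq(yS:_{S}a)$, while any unit $s\in(yS:_{S}a)$ would force $a\in yS$ and contradict $a/y\notin S$. The substantive direction is (3)$\Rightarrow$(1). From $1\notin N$ we read off $a\notin yS$, hence $a/y\notin S$. To verify almost integrality I would check $(a/y)N\subseteq N$ and then invoke $S^{*}=N:_{F}N$ from Theorem~\ref{cic arch}. For $n\in N$ the hypothesis gives $na/y\in S$; the only possible failure mode is $na/y=u\in S^{\times}$. In that case $y=n(a/u)$ with $c:=a/u\in S$, and a short calculation (write $sa\in yS=ncS$, cancel $c$ after using $a=uc$) shows $(yS:_{S}a)=nS$. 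The hypothesis then forces $N=nS$ to be principal. But an archimedean local domain with principal maximal ideal is a DVR: by the archimedean property every nonzero element is $n^{k}$ times a unit, so elements are totally ordered by divisibility, and the ring has dimension $1$, contradicting $\dim S\geq 2$. Hence $(a/y)N\subseteq N$, so $a/y\in S^{*}\setminus S$.

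The ``moreover'' clause is then immediate: any $\alpha\in S^{*}\setminus S$ lies in $T=S[1/x]$, so $\alpha=a/x^{k}$ with $a\in S$ nonzero and $k\geq 1$, and $y:=x^{k}$ is $N$-primary. The main obstacle is the case analysis in (3)$\Rightarrow$(1): identifying $(yS:_{S}a)$ as the principal ideal $nS$ in the problematic subcase and invoking non-principality of $N$ (which rests on archimedeanness together with $\dim S\geq 2$) is the decisive move.
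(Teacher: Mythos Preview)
Your proof is correct and follows essentially the same route as the paper for the equivalence (1)$\Leftrightarrow$(2): both translate the question into $a/y\in W\setminus V$ via $S^{*}=W\cap T$ and $S=V\cap T$, and then read off $w(a/y)=0$, $\edeg(a/y)>0$ from the lexicographic description in Theorem~\ref{v-values}.

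For (3)$\Rightarrow$(1) your argument is actually more careful than the paper's. From $N=(yS:_{S}a)$ one obtains only $(a/y)N\subseteq S$, i.e.\ $a/y\in N^{-1}$; the paper asserts $a/y\in(N:_{F}N)$ directly, which tacitly uses $N^{-1}=(N:_{F}N)$, equivalently that $N$ is not principal. You make this step explicit by showing that if some $(a/y)n$ were a unit then $(yS:_{S}a)=nS$, forcing $N$ principal and hence (by the archimedean hypothesis) $S$ a DVR, contradicting $\dim S\ge 2$. One small omission: in (1)$\Rightarrow$(3) you begin with ``expanding $(a/y)N\subseteq N$''; you should first note that this inclusion holds because $a/y\in S^{*}=(N:_{F}N)$ by Theorem~\ref{cic arch}.

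Your ``moreover'' argument, writing $\alpha\in S^{*}\setminus S\subseteq T=S[1/x]$ as $a/x^{k}$, is simpler than the paper's, which instead clears common essential prime factors in some $R_{n}$ to force the denominator into $T^{\times}$. Both work; yours avoids the factorization detour.
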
 

\begin{proof} 
Let $V$ denote the boundary valuation ring of $S$.

  (1) $\Rightarrow$ (2)
By Theorem~\ref{hull}(1), $S = V \cap T$, and by Theorem~\ref{cic arch},  $S^{*} = W \cap T$. Since $\frac{a}{y} \in T$ and $\frac{a}{y} \in S^{*} \setminus S$, it follows that $\frac{a}{y} \in W \setminus V$.
Since $\frac{a}{y} \in W$, $w (a) \ge w (y)$.
By Theorem~\ref{v-values}, it follows that $w (a) = w (y)$ and $e (\frac{a}{y}) > 0$.
Thus Lemma~\ref{e facts} implies $a S$ is not $N$-primary.

(2) $\Rightarrow$ (1)   
Since $a S$ is not $N$-primary and $y S$ is $N$-primary,  by Lemma~\ref{e facts},  
$e (a) > 0$ and $e (y) = 0$.
It follows from Theorem~\ref{v-values} that $\frac{a}{y} \notin V$, so $\frac{a}{y} \notin S$.
Since $a \in S$ and $\frac{1}{y} \in T$, it follows that $\frac{a}{y} \in T$.
Since $w (\frac{a}{y}) = 0$, $\frac{a}{y} \in W$.
Thus $\frac{a}{y} \in W \cap T = S^{*}$.

(1) $\Rightarrow$ (3) By Theorem~\ref{cic arch}, $(N:_FN)$ is the complete integral closure of $S$,
  and by assumption $\frac{a}{y}$ is almost integral over $S$.  Hence 
  $\frac{a}{y} \in N^{-1}$.    Since $N$ is a maximal ideal of $S$ and $\frac{a}{y} \not \in S$, it 
  follows  that $N = (yS:_Sa)$.  
  
  (3) $\Rightarrow$ (1) By  assumption,  $N = (yS:_Sa)$.   Hence 
  $\frac{a}{y} \in (N:_FN)\setminus S$.   Since $(N:_FN)$ is the complete integral closure of $S$, 
  (1) follows.

Finally, we show every almost integral element $\frac{a}{y} \in S^{*} \setminus S$ with $a, y \in S$ has the property that $y S$ is $N$-primary.
Since $\frac{a}{y} \notin S$,   we have  $y \in N$.
Since $S^{*} = W \cap T$, it follows that $\frac{a}{y} \in T$.
By factoring out the common essential prime factors of $a, y$ in $R_n$ for $n \gg 0$ as in Proposition~\ref{4.4}\ref{6.3}, we may assume that $a, y$ have no common factors in the UFD $T$.
Therefore $y \in T^{\times}$, so $y S$ is $N$-primary.
\end{proof}

  Theorem~\ref{almost integral}  shows 
that the existence of almost integral elements depends on the existence of a pair of elements $a, y \in S$ with $w (y) = w (a)$ and $0 = e (y) < e (a)$.
Using Theorem~\ref{w-values}, we prove in Theorem~\ref{not cic} that $S$ is completely integrally closed if and only if $\tau$ is rationally independent over the 
subgroup $w (T^{\times})$ of $\mathbb R$.

\begin{theorem} \label{not cic}
Assume Setting~\ref{setting 1} and let $w$ and $\tau$ be as in Definition~\ref{6.13} and Notation~\ref{tau}.
Assume that $S$ is archimedean with $\dim S \ge 2$.
Then the following are equivalent.
\begin{enumerate}
\item[{\rm (1)}] $S$ is not completely integrally closed.


\item[{\rm (2)}] $V \subsetneq W$.

\item[{\rm (3)}] $\tau$ is rationally dependent over the subgroup $w(T^\times)$ of ${\mathbb{R}}$.\footnote{A real number $\lambda \in {\mathbb{R}}$ is rationally dependent over a subgroup $G \subseteq {\mathbb{R}}$ if and only if $d \lambda \in G$ for some positive integer $d$.}
\end{enumerate}
\end{theorem}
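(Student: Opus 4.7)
The plan is to chain together three results already established in the paper. The equivalence (1)$\iff$(2) follows immediately from Theorem~\ref{cic arch}(2), which asserts $S = S^*$ if and only if $V = W$; since $V \subseteq W$ by construction of $W$ as the rank $1$ valuation overring of $V$, this is the desired equivalence. The substance of the theorem lies in (2)$\iff$(3), and the key tools here are Corollary~\ref{rank 1 overring}, which characterizes $V \subsetneq W$ as the existence of nonzero $a, b \in S$ with $w(a) = w(b)$ but $\edeg(a) \neq \edeg(b)$, and Remark~\ref{w-values representation}, which writes $w(a) = w(y) + \edeg(a)\tau$ for some $y \in T^\times$.

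For (2)$\Rightarrow$(3): I would take nonzero $a, b \in S$ provided by Corollary~\ref{rank 1 overring}, apply Remark~\ref{w-values representation} to each to obtain $w(a) = w(y_1) + \edeg(a)\tau$ and $w(b) = w(y_2) + \edeg(b)\tau$ with $y_1, y_2 \in T^\times$, and subtract. Using $w(a) = w(b)$ this yields $(\edeg(b)-\edeg(a))\tau = w(y_1/y_2) \in w(T^\times)$, and since $\edeg(b)-\edeg(a)$ is a nonzero integer, $\tau$ is rationally dependent over $w(T^\times)$.

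For (3)$\Rightarrow$(2), assume $d\tau = w(u)$ for a positive integer $d$ and a unit $u \in T^\times$. Since $\dim S \ge 2$, $S$ is not a rank $1$ valuation domain, so by Remark~\ref{e trivial case} there exists nonzero $a \in S$ with $\edeg(a) > 0$. Applying Remark~\ref{w-values representation}, write $w(a) = w(y) + \edeg(a)\tau$ for some $y \in T^\times$. A direct computation then gives $w(a^d) = dw(y) + \edeg(a)\cdot d\tau = w(y^d u^{\edeg(a)})$. Using Theorem~\ref{hull}(2) I would write the $T$-unit $y^d u^{\edeg(a)}$ as $c_1/c_2$ with $c_1, c_2 \in S$ both $N$-primary. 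Since $N$-primary elements of $S$ are units of $T$, Lemma~\ref{e facts}(4) yields $\edeg(c_1) = \edeg(c_2) = 0$. The pair $a^d c_2, c_1 \in S$ then satisfies $w(a^d c_2) = w(c_1)$ while $\edeg(a^d c_2) = d\edeg(a) > 0 = \edeg(c_1)$, and Corollary~\ref{rank 1 overring} delivers $V \subsetneq W$.

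The main obstacle is the direction (3)$\Rightarrow$(2): one must bootstrap the purely valuative identity $d\tau = w(u)$, living in the image group of $w$, into an actual pair of elements of $S$ witnessing the failure of Corollary~\ref{rank 1 overring}(2). The mechanism is the $d$-th power trick to absorb the integer $d$, followed by clearing denominators of the $T$-unit $u$ via $N$-primary elements of $S$, whose vanishing $\edeg$-value is exactly what preserves the inequality of $\edeg$-values in the resulting pair.
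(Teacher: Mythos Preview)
Your proof is correct and follows essentially the same approach as the paper: the key computation $w(a^d)=w(y^d u^{\edeg(a)})$ via Remark~\ref{w-values representation} is identical. The only organizational difference is that the paper closes the cycle with (3)$\Rightarrow$(1), invoking Theorem~\ref{almost integral} to exhibit $a^d/(y^d c^{\edeg(a)})$ as an explicit almost integral element, whereas you land on (2) via Corollary~\ref{rank 1 overring}; your clearing-denominators step $y^d u^{\edeg(a)}=c_1/c_2$ with $c_i$ $N$-primary is in fact slightly more careful than the paper's direct appeal to Theorem~\ref{almost integral}, which tacitly treats $y^d c^{\edeg(a)}$ as an element of $S$.
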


\begin{proof}
(1) $\Rightarrow$ (2)
If $V = W$, then Theorem~\ref{cic arch} implies $S$ is completely integrally closed.


(2) $\Rightarrow$ (3)
Theorem~\ref{v-values} implies there exists an element $x \in F$ such that $w (x) = 0$ and $e (x) \ne 0$.
Then Remark~\ref{w-values representation} implies $\tau$ is rationally dependent over $w (T^{\times})$.

(3) $\Rightarrow$ (1)
By rational dependence, for some positive integer $d$ there exists $c \in T^{\times}$ such that $w (c) = d \tau$.
Since $\dim S \geq 2$, there exists $a \in S$ such that $\edeg (a) > 0$ by Remark~\ref{e trivial case}.
By Remark~\ref{w-values representation}, there is some $y \in T^{\times}$ such that $w (a) = w (y) + \edeg (a) \tau$.
Then
	$$w (a^d) = d w (y) + d \edeg (a) \tau = w (y^d c^{\edeg (a)}).$$	
By Lemma~\ref{e facts}\ref{e facts 4}, $a \notin T^{\times}$, thus $a$ is not $N$-primary.
By Theorem~\ref{almost integral}, $\frac{a^d}{y^d c^{\edeg (a)}}$ is almost integral over $S$ and not in $S$.
\end{proof}

\begin{remark} 
With notation as in Theorem~\ref{not cic}, Theorem~\ref{v-values} implies that the condition $V \subsetneq W$ is equivalent to the condition that 
$\dim V = 2$.
\end{remark}

 Gilmer in  \cite[page~524]{Gil}  defines  an integral domain  $A$ with quotient field  $K$  to 
be  a \emph{generalized Krull domain}  if 
there is a set $\mathcal{F}$ of rank $1$ valuation overrings of $A$  such that:
(i) $A = \bigcap_{\mathcal V \in \mathcal{F}} \mathcal V$; (ii) for  
each $(\mathcal V, {\ff M}_{\mathcal V}) \in \mathcal{F}$,
 we have  $\mathcal V = A_{{\ff M}_{\mathcal V}  \cap A}$; and (iii) 
$\mathcal{F}$ has finite character; that is, if $x \in K$ is nonzero, then $x$ is a nonunit in
only  finitely many valuation rings of $\mathcal{F}$. 
The class of generalized Krull domains  
 has been studied by a number of authors; see for example \cite{Gri,Gri2,HO,PT,Pir,Rib}.
%

\begin{theorem}\label{7.4}
Assume Setting~\ref{setting 1}  and   let $w$, $W$ and $\tau$ be as in Notation~\ref{w-function}, Definition~\ref{6.13} and Notation~\ref{tau}.
If $S$ is archimedean and not completely integrally closed,
then 
\begin{enumerate}
\item[{\rm (1)}]  $N$ is a height $1$ prime of $S^{*}$, and $S^{*}_{N} = W$.
\item[{\rm (2)}]  Every height $1$ prime ideal of $S^{*}$ is the radical of a principal ideal.
\item[{\rm (3)}]   If $\tau \in w(T^\times)$, then every height 1 prime of $S^*$ other than $N$ is principal.
\item[{\rm (4)}]  $S^{*}$ is a generalized Krull domain.
\end{enumerate}
\end{theorem}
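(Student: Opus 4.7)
Statement (4) is immediate from Theorem~\ref{cic arch}(3), since the hypothesis $S \neq S^{*}$ is in force; I would prove it first and use the generalized Krull structure in the remainder of the argument. For (1), the core step is to identify $\m_{W} \cap S^{*}$ with $N$. The containment $N \subseteq \m_{W}$ is Theorem~\ref{w properties}(3) applied to $W$. Conversely, given $a \in \m_{W} \cap S^{*}$: if $a \in S$ then $a \in \m_{W} \cap S = N$, and if $a \in S^{*} \setminus S$ then Theorem~\ref{almost integral} writes $a = b/y$ with $w(b)=w(y)$, giving $w(a) = 0$ and contradicting $a \in \m_{W}$. With $\m_{W} \cap S^{*} = N$ in hand, the defining property of a generalized Krull domain together with the presence of $W$ in the defining family coming from Theorem~\ref{cic arch}(3) yields $S^{*}_{N} = S^{*}_{\m_{W} \cap S^{*}} = W$, and $N$ has height $1$ because $W$ has rank $1$.

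For (2) and (3), every height $1$ prime of $T$ is principal, generated by an essential prime element $\pi$, since by Setting~\ref{setting 1}(7) $T$ is a localization of the UFD $R$. Thus the height $1$ primes of $S^{*}$ other than $N$ are exactly the contractions $P = \pi T \cap S^{*}$. The case $P = N$ in (2) is disposed of by observing $\sqrt{xS^{*}} = N$, which is immediate from $xS$ being $N$-primary. For $P \neq N$, the naive candidate $\pi S^{*}$ fails to have radical equal to $P$ because $P$ contains elements of $w$-value zero that cannot be absorbed into powers of $\pi$ inside $W$. The remedy is to construct $c = \pi^{k}/u \in S^{*}$ with $w(c) = 0$: such $c$ has $v_{\pi}$-value $k$, vanishes at every other height $1$ prime of $T$, and lies in $W^{\times}$. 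A direct check using $S^{*} = W \cap T$ then shows that $a^{n}/c \in W \cap T = S^{*}$ for every $a \in P$ and every $n$ sufficiently large, so $\sqrt{cS^{*}} = P$. Statement (3) is the refinement $k = 1$, in which the same computation yields $P = cS^{*}$ without taking radicals.

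The crux is producing the integer $k$ and the unit $u \in T^{\times}$ with $kw(\pi) = w(u)$, that is, $kw(\pi) \in w(T^{\times})$ for some $k \geq 1$. Here I would apply Remark~\ref{w-values representation} to obtain $y \in T^{\times}$ with $w(\pi) = w(y) + \edeg(\pi)\tau$, and combine this with the rational dependence of $\tau$ on $w(T^{\times})$ from Theorem~\ref{not cic}(3): writing $d\tau = w(z)$ with $z \in T^{\times}$, one obtains $dw(\pi) = w(y^{d}z^{\edeg(\pi)})$. Taking $k = d$ and $u = y^{d}z^{\edeg(\pi)}$ settles (2); the stronger hypothesis of (3) permits $d = 1$, yielding the principal equality. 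Transferring the rational dependence from $\tau$ to each $w(\pi)$ in this way is where the hypothesis that $S$ fails to be completely integrally closed is used in an essential manner. This is the main technical obstacle; once the element $c$ is in hand, the remaining verifications reduce to formal manipulations with $w$ and the $\pi$-adic valuation on $T$.
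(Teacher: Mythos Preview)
Your plan is correct, and the central technical step—producing $c=\pi^{d}/u\in S^{*}$ with $w(c)=0$ by combining Remark~\ref{w-values representation} with the rational dependence of $\tau$ from Theorem~\ref{not cic}—is exactly the paper's construction (its element $q=p^{d}/(y^{\edeg(p)}u^{d})$); your verification that $\sqrt{cS^{*}}=\p\cap S^{*}$, and that $d=1$ gives principality, also matches the paper line for line.

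The genuine difference is in the treatment of (1) and (4). The paper proves (1) first and directly: having built $q$ with $w(q)=0$, it observes $q\in(\p\cap S^{*})\setminus N$, so $\p\cap S^{*}\not\subseteq N$ for every height-$1$ prime $\p$ of $T$, and then applies \cite[Lemmas~1.1 and~2.3]{HO2} to the finite-character representation $S^{*}=W\cap\bigcap T_{\p}$ to conclude that $N$ has height $1$ and $S^{*}_{N}=W$; item (4) is then read off from this same representation. You instead import (4) from Theorem~\ref{cic arch}(3) and deduce (1) by showing $\m_{W}\cap S^{*}=N$ via Theorem~\ref{almost integral} and invoking the generalized Krull structure. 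This works, but be aware that Theorem~\ref{cic arch}(3) as stated only says $S^{*}$ is a generalized Krull domain; it does not assert that $W$ lies in the essential defining family. To close your argument you need the (standard) fact that an irredundant rank-$1$ valuation in a finite-character representation of a generalized Krull domain is essential, together with the observation $S^{*}\ne T$ (which follows since $S$ archimedean forces $\bigcap_{n}x^{n}S=0$, so $1/x\notin S^{*}$). Your route trades the paper's appeal to \cite{HO2} for this piece of Krull-type theory; either is legitimate, and the paper's order has the advantage that the same element $q$ simultaneously handles (1) and (2).
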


\begin{proof}
By Theorem~\ref{cic arch}, $N$ is the center of $W$ on $S^{*}$, and $S^{*}$ has the representation
$$S^* = W \cap T = W \cap \bigcap_{\substack{\p \in \Spec T \\ \hgt \: \p = 1}} T_{\p}.$$
Since the representation of $T$ as the intersection of its localizations at height $1$ primes has finite character, 
so does this representation of $S^{*}$ as the intersection of valuation domains.  By 
\cite[Lemma 2.3]{HO2},  
the height $1$ prime ideals of $S^{*}$ are a subset of the contractions of the height $1$ primes of $T$,
 along with  possibly $N$.   By \cite[Lemma 1.1]{HO2},  to show $S^{*}_N = W$ and that $N$ is a height $1$ 
 prime of $S^*$, it suffices to show that $\p \cap S^{*} \not\subseteq N$ for each height $1$ prime $\p$ of $T$.


Let $\p = p T$ be a height $1$ prime ideal of $T$.
Then by Lemma~\ref{e facts}(4), $\edeg (p) > 0$.
Theorem~\ref{not cic} implies that $d \tau = w (y)$ for some integer $d > 0$ and some  $N$-primary element $y \in N$.
By Remark~\ref{w-values representation}, $w (p) = \edeg (p) \tau + w (u)$ for some $u \in T^{\times}$.
Denote $q = \frac{p^d}{y^{\edeg (p)} u^d}$.
Thus 
\begin{align*}
	w (q) &= w \left( \frac{p^d}{y^{\edeg (p)} u^{d}} \right) \\
		&= d w (p) - ( \edeg (p) w (y) + d w (u) )\\
		&= d (\tau \edeg (p) + w (u) ) - ( d \tau \edeg (p) + d w (u) ) \\
		&= 0.
\end{align*}
Since $q \in \p$ and $q$ is a unit of $W$, $\p \cap S^{*} \not\subseteq N$.
This completes the proof of (1).

Furthermore, $\p \cap S^{*} = \sqrt{q S^{*}}$.
To see this, let $a \in \p \cap S^{*}$.
Since $a^d \in \p^d = q T$, it follows that $\frac{a^d}{q} \in T$, and since $w (a) \ge 0$ and $w (q) = 0$, it follows that $\frac{a^d}{q} \in W$.
Thus $\frac{a^d}{q} \in S^{*}$, so $a^d \in q S^*$.
Since the only remaining height $1$ prime ideal $N$ of $S^{*}$ is also the radical of a principal ideal, this completes the proof of (2).

If $\tau \in w(T^\times)$,  we may take $q = \frac{p}{y^{\edeg (p)} u}$.  It follows that $\p \cap S^* = qS^*$. 
This proves (3).   Since 
$S^*$ satisfies the conditions of a generalized Krull domain, (4) follows.
\end{proof}



We indicate how to obtain completely integrally closed Shannon extensions that are not valuation domains.
We use the following observations about rank $1$ valuations that birationally dominate a sequence of local quadratic transforms.

\begin{example}\label{finite construction}
 Let $\sigma > 2$ be an irrational real number.
Starting from $R = R_0$, we inductively define a sequence of local quadratic transforms of $2$-dimensional regular local rings $R_i$ with regular system of parameters $\m_i = (x_i, y_i)$,
		$$(R_0, \m_0) \subseteq \ldots \subseteq (R_n, \m_n).$$
We show that it  is possible to choose the sequence $R_i$ so that 
 every rank $1$ valuation ring $\mathcal V$  that  birationally dominates  
 $R_n$  has the following property:  if we choose a valuation $\v$ for $\mathcal V$ such that  
$\v (x_0) = 1$,   then it follows that $\sum_{i=0}^{n-1} \v (\m_i) = \lfloor \sigma \rfloor$ 
and $\frac{\sigma - \lfloor \sigma \rfloor}{\v (x_n)} > 2$, where $\v (x_n) = \frac{1}{2^e}$ for some integer $e \ge 2$.

Set $d = \lfloor \sigma \rfloor - 2$.
Then for $0 \le i < d$, set 
	$$R_{i+1} = R_{i} \left[ \frac{y_i}{x_i} \right]_{(x_i, \frac{y_i}{x_i})}, \quad x_{i+1} = x_i, \quad y_{i+1} = \frac{y_i}{x_i}.$$
Then, set
	$$R_{d + 1} = R_{d} \left[ \frac{y_{d}}{x_{d}} \right]_{(x_{d}, \frac{y_{d}}{x_{d}} - 1)}, 
	\quad x_{d+1} = x_{d}, \quad y_{d + 1} = \frac{y_{d}}{x_{d}} - 1.$$
In this construction, $x_{d+1} = x_0$ and $y_d = \frac{y_0}{x_0^d}$.
Let $\mathcal V$ be a rank $1$ valuation birationally dominating $R_{d+1}$ 
with $\v (x_0) = 1$.
By construction of $R_{d+1}$, $\v (\frac{y_d}{x_d}) = 0$, so it follows that 
$\v (y_d) = \v (x_0) = 1$ and $\v (y_0) = d + 1$.
In particular,
	$$\sum_{i=0}^{d} \v (\m_i) = \sum_{i=0}^{d} 1 = d + 1 = \lfloor \sigma \rfloor - 1.$$

Next, let $e$ be an integer such that $2^e (\sigma - \lfloor \sigma \rfloor) > 2$.
Then for $d + 1 \le i < 2^e + d$, set
	$$R_{i+1} = R_{i} \left[ \frac{x_i}{y_i} \right]_{(y_i, \frac{x_i}{y_i})},
	\quad x_{i+1} = \frac{x_i}{y_i}, \quad y_{i+1} = y_i.$$
Set $f = 2^e + d$ for convenience of notation.
Then set
	$$R_{f + 1} = R_{f} \left[ \frac{y_{f}}{x_{f}} \right]_{(x_{f}, \frac{y_{f}}{x_{f}} - 1)},		\quad x_{f + 1} = x_f, \quad y_{f + 1} = \frac{y_{f}}{x_{f}} - 1.$$
In this construction, $y_f = y_{d+1}$ and $x_f = \frac{x_{d+1}}{y_{d+1}^{2^e - 1}}$.
Let $\mathcal V$ be a rank $1$ valuation domain birationally dominating $R_{f+1}$ 
with $\v (x_0) = 1$.
By construction of $R_{f+1}$, it follows that $\v (\frac{x_f}{y_f}) = 0$, so 
$\v (x_{d+1}) = 2^e \v (y_{d+1})$.
Since $\v (x_{d+1}) = \v (x_0) = 1$, it follows that $\v (y_{d+1}) = \frac{1}{2^e} = \v (x_{f+1})$.
Therefore,
	$$\sum_{i=d+1}^{f} \v (\m_i) = \sum_{i=d+1}^{f} \frac{1}{2^e} = (2^e) \frac{1}{2^e} = 1.$$
Set $n = f + 1$.
We then have
	$$\sum_{i=0}^{n-1} \v (\m_i) = \lfloor \sigma \rfloor.$$
and $\v (x_n) = \frac{1}{2^e}$.




\end{example}

\begin{example}\label{infinite construction} 
Let $\sigma > 2$ be an irrational real number.
We construct an example of an infinite  sequence of 
$2$-dimensional regular local rings $(R_i, \m_i)$ such that 
$R_{i+1}$ is a local quadratic transform of $R_i$ for each $i$,  and 
$\sum_{i=0}^{\infty} \v (\m_i) = \sigma$ for the unique rank $1$ 
valuation ring $\mathcal V$ birationally dominating the union with $\v (\m_0) = 1$.
Since $\dim R_i = 2$, we have $\mathcal V = \bigcup_{n=0}^{\infty} R_i$ by \cite[Lemma 12, p. 337]{Abh}.
In addition, we show that the value group of $\mathcal V$ is the additive group of $\mathbb{Z} [\frac{1}{2}]$.
To construct this sequence, we repeat the construction in Example~\ref{finite construction} an infinite number of times.

We start with $\sigma_0 = \sigma$ and $n_0 = 0$.
Given $\sigma_j$ and $R_{n_j}$, we construct the sequence of $2$-dimensional regular local rings from $R_{n_j}$ to $R_{n_{j+1}}$ for some $n_{j+1} > n_j$ as in Example~\ref{finite construction}.
With this construction, for any valuation ring $\mathcal V$ birationally 
dominating $R_{n_j}$ with $\v (\m_{n_j}) = 1$, it follows that
	$$\sum_{i=n_j}^{n_{j+1} - 1} \v (\m_i) = \lfloor \sigma_j \rfloor.$$
Set $\sigma_{j+1} = 2^{e_j} ( \sigma_j - \lfloor \sigma_j \rfloor)$, 
where $e_j$ is as in Example~\ref{finite construction}.

We repeat this construction to obtain an infinite sequence of local quadratic transforms,
	$$R_0 \subseteq \ldots \subset R_{n_1} \subseteq \ldots \subseteq R_{n_j} \subseteq \ldots$$
Let $S = \bigcup_{n=0}^{\infty} R_n$.
Then $S = \mathcal V$ is a valuation ring.
Fix $\v (\m_0) = 1$.
By Example~\ref{finite construction} and by induction 
$\v (\m_{n_j}) = \prod_{i=0}^{j - 1} \frac{1}{2^{e_i}}$ for every $j$.
Therefore, for any $j$,
	$$\sum_{i=n_j}^{n_{j+1} - 1} \v (\m_i) = \sigma_j \prod_{i=0}^{j-1} \frac{1}{2^{e_i}}. $$
A basic inductive argument yields that
	$$\sigma - \sum_{i=0}^{n_j - 1} \v (\m_i) < \prod_{i=0}^{j-1} \frac{1}{2^{e_i}}.$$
We conclude that $\sigma = \sum_{i=0}^{\infty} \v (\m_i)$.
Furthermore, it follows that for every $i \ge 0$, $\v (x_i), \v (y_i) \in \frac{1}{2^k} \mathbb{Z}$ for some $k \ge 0$ (where $k$ depends on $i$ and increases as $i$ increases).
\end{example}

\begin{corollary}
There exists an archimedean Shannon extension $S$ that is completely integrally closed but not a valuation domain.
\end{corollary}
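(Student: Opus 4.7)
The plan is to lift the $2$-dimensional construction of Example~\ref{infinite construction} to a $3$-dimensional regular local ring, so that the irrationality of $\tau$ is inherited (forcing complete integral closure via Theorem~\ref{not cic}), while the third variable ensures $\dim S \geq 3$, which rules out $S$ being a valuation domain.

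Start with a field $k$ and set $R = k[X, Y, Z]_{(X, Y, Z)}$. Example~\ref{infinite construction} prescribes an infinite sequence of local quadratic transforms of $2$-dimensional regular local rings, obtained by transforms in the direction of parameters (call them $x_n$, using Setting~\ref{setting 1} notation) for which $\m_n R_{n+1} = x_n R_{n+1}$. Lift this to dimension three by defining, at each stage, $R_{n+1}$ to be the local quadratic transform of $R_n$ in the same direction $x_n$, with the third parameter $z_{n+1} := z_n / x_n$ placed in $\m_{n+1}$ through a careful choice of the prime ideal of $R_n[\m_n/x_n]$. Inductive verification shows that $(x_n, y_n, z_n)$ is a regular system of parameters for $R_n$ and that each $R_{n+1}$ is a local quadratic transform of $R_n$.

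Let $S = \bigcup_n R_n$ with Noetherian hull $T$. Three claims complete the argument. For \textbf{archimedeanness}, since the generators $x_n$ of $\m_n R_{n+1}$ coincide with those in the $2$-dimensional construction, the invariant $\tau = \sum_{n \geq 0} w(x_n)$ equals the finite irrational value $\sigma$ from Example~\ref{infinite construction}, so Theorem~\ref{arch} shows $S$ is archimedean. For \textbf{non-valuation}, $\dim R_n = 3$ is preserved through the transforms, forcing $\dim S = 3$; but a Shannon extension that is simultaneously a valuation domain must have rank at most $2$ (because its Noetherian hull $T$ is a Noetherian valuation, hence a DVR or a field, forcing $\dim S \leq 2$), so $\dim S = 3$ implies $S$ is not a valuation domain. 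For \textbf{complete integral closure}, observe that every $N$-primary element $a \in S$ has $\edeg(a) = 0$ by Lemma~\ref{e facts}\ref{e facts 4}, so Theorem~\ref{w-values} expresses $w(a)$ as a finite integer combination of the values $w(x_n)$, each of which lies in $\mathbb{Z}[1/2] \subseteq \mathbb{Q}$; multiplicativity of $w$ then gives $w(T^\times) \subseteq \mathbb{Q}$. Since $\tau = \sigma$ is irrational, no positive integer multiple of $\tau$ lies in $w(T^\times)$, so Theorem~\ref{not cic} yields that $S$ is completely integrally closed.

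The main obstacle is the careful verification that the lifted sequence is really a chain of $3$-dimensional local quadratic transforms, with $z_{n+1}$ genuinely lying in $\m_{n+1}$ at each step. This hinges on a correct choice of the prime ideal through which to localize $R_n[\m_n/x_n]$, compatible with both the $2$-dimensional prescription and the requirement that $z_{n+1}$ remain a regular parameter. A secondary concern is the precise description of $w(T^\times)$: one must trace the units of $T$ back to finite combinations of the $x_n$, but this follows routinely from the finiteness of the sum in Theorem~\ref{w-values} once $\edeg(a) = 0$.
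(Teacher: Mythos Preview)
Your approach is essentially the paper's: lift the $2$-dimensional sequence of Example~\ref{infinite construction} to dimension three by adjoining a third parameter with $z_{n+1} = z_n/x_n$ (the paper writes this as $R_n' = R_n[z_n]_{(\m_n,z_n)}$), so that $\tau = \sigma$ is irrational while $w(T^\times) \subseteq \mathbb{Z}[\tfrac{1}{2}]$, and then invoke Theorems~\ref{arch} and~\ref{not cic}. One point the paper makes explicit and you should too is the identification $\ord_{R_n}(a) = \ord_{R_n'}(a)$ for $a \in R_n$, which is what transports the $2$-dimensional values $\v(x_n) \in \mathbb{Z}[\tfrac{1}{2}]$ to the $3$-dimensional $w(x_n)$.

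There is, however, a real slip in your non-valuation step. The inference ``$\dim R_n = 3$ for all $n$, hence $\dim S = 3$'' is invalid: the dimension of a Shannon extension can drop below that of the $R_n$, and in this particular construction one in fact has $\dim S = 2$ (the only essential prime divisor arises from $z$, so $\dim T = 1$). Your rank-at-most-$2$ argument therefore yields no contradiction. The paper instead notes that $\edeg(z) = 1$, so by Remark~\ref{e trivial case} $S$ is not a rank~$1$ valuation ring; since $S$ is archimedean, were it a valuation domain it would necessarily have rank~$1$, and this is the contradiction. Equivalently, $\edeg(z) > 0$ gives $\dim S \geq 2$, and archimedeanness then rules out $S$ being a valuation domain.
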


\begin{proof}
Let $\sigma > 2$ be an irrational real number.
From the construction of Example~\ref{infinite construction}, consider the sequence $(R_i, \m_i)$ of local quadratic transforms of $2$-dimensional regular local rings.
Let $\v$ be the rank $1$ valuation for $\mathcal V = \bigcup_{i=0}^{\infty} R_i$ such that $\v (\m_0) = 1$, so that 
$\sum_{i=0}^{\infty} \v (\m_i) = \sigma$ and the value group of $\v$ is the additive group of $\mathbb{Z} [\frac{1}{2}]$.

Let $z$ be an indeterminate, let $x_n R_{n+1} = \m_n R_{n+1}$ for $n \ge 0$, and denote $z_n = \frac{z}{\prod_{i=0}^{n-1} x_i}$.
Then consider the sequence of local quadratic transforms of $3$-dimensional regular local rings, $R_i' = R_i [z_i]_{(\m_i, z_i)}$.
Since $\edeg (z) = 1$, Remark~\ref{e trivial case} implies  the Shannon extension $S = \bigcup_{i=0}^{\infty} R_i'$ is not a rank $1$ valuation ring.
Let $w$ be as in Notation~\ref{w-function} with $w (\m_0) = 1$.
Notice that for $a \in R_i$, we have $\ord_{R_i} (a) = \ord_{R_i'} (a)$, so the restriction of $w$ to the quotient field of $\mathcal V$ is equal to $\v$.
Thus
	$$\sum_{n=0}^{\infty} w (\m_n) = \sigma, \quad w (T^{\times}) = \bigcup_{n=0}^{\infty} \frac{1}{2^n} \mathbb{Z}.$$
Since $\sigma = \tau < \infty$,  Theorem~\ref{arch}  implies that   $S$ is archimedean.     Since 
$\sigma$ is not rationally dependent over $w (T^{\times})$, Theorem~\ref{not cic} implies that
 $S$ is completely integrally closed.
\end{proof}

\section{Non-archimedean Shannon extensions}  \label{section 6}


In this section we describe the Shannon extensions that are not archimedean.
Like the archimedean case, the functions $\edeg:  F^\times \to \mathbb Z$ and $w : F \rightarrow {\mathbb{R}} ~  \cup   ~  \{-\infty,+\infty \}$ from Definition~\ref{e def} and Notation~\ref{w-function} describe the boundary valuation $v$ in terms of a composite.
In the archimedean case, $w$ defines a valuation ring $W$ 
 on $F$, and if $S \ne S^{*}$, then $\edeg$ defines a valuation on the residue field of $W$.
In the non-archimedean case, the situation reverses:
$\edeg$ defines a valuation    ring $E$ on $F$ and $w$ defines a valuation on the residue field of $E$.


\begin{theorem}\label{nonarch ew}
Assume Setting~\ref{setting 1} and assume  that $S$ is non-archimedean. For $a \in F^\times$,  we have:
\begin{enumerate}
\item[{\rm (1)}]   If    $\edeg (a) > 0$, then $w (a) =  +\infty$.
\item[{\rm (2)}] 
The set 
$P_\infty = \{ a \in S \mid w (a) = + \infty \}$ is a prime ideal of both $S$ and $T$.

\end{enumerate} 
\end{theorem}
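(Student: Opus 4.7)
The approach for part (1) is a direct computation with the formula from Theorem~\ref{w-values}, using the non-archimedean characterization. Since $S$ non-archimedean forces $\dim S \ge 2$ (Remark~\ref{dim nonarch}), Theorem~\ref{arch} makes $\tau = +\infty$ available. Each $w(x_n)$ is finite: by Setting~\ref{setting 1}(7) every $x_n$ is a unit in $T$, so $\edeg(x_n) = 0$ and Theorem~\ref{w properties}(4) applies. Consequently the tail sum $\sum_{n \ge m} w(x_n)$ differs from $\tau$ by a finite amount and is itself $+\infty$. Fixing $m$ with $a \in R_m$ and using Remark~\ref{GR lemma}(3) to get $\ord_n(a_n) \ge \edeg(a)$ for $n \ge m$, Theorem~\ref{w-values} then gives
\[
w(a) \;=\; \sum_{n=m}^\infty \ord_n(a_n)\, w(x_n) \;\ge\; \edeg(a)\sum_{n=m}^\infty w(x_n) \;=\; +\infty.
\]

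For part (2), the plan is to apply Remark~\ref{w valuation}(4) twice, with $A = S$ and with $A = T$. For $A = S$: each nonzero $s \in S$ lies in some $R_i$, so $\ord_j(s) \ge 0$ for $j \gg 0$, giving $w(s) \ge 0 \ne -\infty$. For $A = T$: Theorem~\ref{hull}(2) represents any $t \in T$ as $s/x^k$ with $s \in S$ and $k \ge 0$, whence $w(t) = w(s) - k \ge -k \ne -\infty$. Remark~\ref{w valuation}(4) therefore delivers two prime ideals: $P_\infty$ in $S$, and $Q := \{t \in T \mid w(t) = +\infty\}$ in $T$.

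It remains to identify $Q$ with $P_\infty$, and this is the main obstacle — the only step where something beyond bookkeeping is needed. The inclusion $P_\infty \subseteq Q$ is automatic from $P_\infty \subseteq S \subseteq T$. For the reverse, given $t \in Q$ write $t = s/x^k$, so $w(s) = w(t) + k = +\infty$ and hence $s \in P_\infty$. Since $S = V \cap T$ by Theorem~\ref{hull}(1) and $t \in T$ already, I only need $t \in V$, i.e.\ $\ord_i(t) \ge 0$ for $i \gg 0$. The key observation is that $x$ is a nonunit of $S$, so $x \in \m_i$ and $\ord_i(x) \ge 1$ for $i \gg 0$; combined with $\lim_i \ord_i(s)/\ord_i(x) = +\infty$ this forces $\ord_i(s) > k\,\ord_i(x)$ eventually, so $\ord_i(t) = \ord_i(s) - k\,\ord_i(x) > 0$. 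Hence $t \in V \cap T = S$, which places $t \in P_\infty$ and establishes $Q = P_\infty$.
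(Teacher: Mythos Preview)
Your part~(2) is correct and is essentially the paper's argument: apply Remark~\ref{w valuation}(4) to $S$ and to $T$ separately, then identify the two prime ideals using $S = V \cap T$ together with the fact that $w(t) = +\infty$ forces $t \in \m_V$ (Theorem~\ref{w properties}\ref{w properties 3}).

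Part~(1) has a genuine gap. Theorem~\ref{w-values} is stated only for $a \in S$, and by writing ``fix $m$ with $a \in R_m$'' you have tacitly assumed $a \in S$; the claim, however, is for all $a \in F^\times$. For general $a$ write $a = b/c$ with $b,c \in S$. If $\edeg(c) > 0$ --- which is unavoidable whenever $a \notin T$, for instance $a = p^N/q$ with $p,q$ nonassociate essential primes and $N$ large --- your argument yields $w(b) = w(c) = +\infty$, and $w(a)$ cannot be recovered by subtraction (this is exactly the excluded case in Remark~\ref{w valuation}(2)).

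The paper repairs this by running the computation behind Theorem~\ref{w-values} directly on the ratio. After stripping $T$-units via Lemma~\ref{factorization}, one reduces to $\tilde b/\tilde c$ whose transforms have constant orders $\edeg(b),\edeg(c)$ from some $R_m$ onward. For each $n \ge m$ there is a finite identity $\tilde b/\tilde c = \bigl(\prod_{i=m}^{n-1} x_i^{\edeg(a)}\bigr)\,b_n/c_n$, and the crucial extra step is to verify $\ord_j(b_n/c_n) > 0$ for all $j \ge n$, so that $w(b_n/c_n) \ge 0$. This gives $w(\tilde b/\tilde c) \ge \edeg(a)\sum_{i=m}^{n-1} w(x_i)$ for every $n$, and letting $n \to \infty$ forces $w(a) = +\infty$. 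Your use of $\tau = +\infty$ is exactly the right engine; it just must be applied to these finite identities for each $n$, not to the already-summed formula of Theorem~\ref{w-values}.
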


\begin{proof}
To prove item~1,  we  proceed as in the proof of Theorem~\ref{w-values}.
Let $a \in F^\times$  and  assume that $\edeg (a) > 0$.  Write $a = \frac{b}{c}$ for some $b, c \in N$.
There exist for a fixed large integer $m$ factorizations $b = u \tilde{b}$ and $c = v \tilde{c}$ in $R_m$ as in Lemma~\ref{factorization}, where $u, v \in T^{\times}$, $\ord_m (\tilde{b}) = \edeg (\tilde{b}) = \edeg (b)$, and $\ord_m (\tilde{c}) = \edeg (\tilde{c}) = \edeg (c)$.
That is, fix $m$ sufficiently large so that the orders of the transforms of the principal ideals $\tilde{b} R_m$ and $\tilde{c} R_m$ are constant, namely $\edeg (b)$ and $\edeg (c)$, respectively.
By Lemma~\ref{e facts}(4), $\edeg(u) = \edeg(v) = 0$, so by Theorem~\ref{limit exists}, $w (u)$ and $w (v)$ exist and are finite. Thus to prove that
 $w (a) = + \infty$ it suffices to show that $w (\frac{\tilde{b}}{\tilde{c}}) = + \infty$.

For $n \ge 0$,  we have  $\m_n R_{n+1} = x_n R_{n+1}$, and  by the assumptions of Setting~\ref{setting 1},  $x_n \in T^{\times}$.
For $n \ge m$,  Remark~\ref{GR lemma}  implies that 
$$
	\tilde{b} = \left( \prod_{i=m}^{n-1} x_i^{\edeg (b)} \right) b_n, \quad
	\tilde{c} = \left( \prod_{i=m}^{n-1} x_i^{\edeg (c)} \right) c_n, \quad
	\frac{\tilde{b}}{\tilde{c}} = \left( \prod_{i=m}^{n-1} x_i^{\edeg (a)} \right) \frac{b_n}{c_n}	
$$
for some $b_n, c_n \in R_n$ with $\ord_n (b_n) = \edeg (b)$ and $\ord_n (c_n) = \edeg (c)$.
Thus for all $n \ge m$ and for all $j \ge 0$,
	$$\ord_j \left( \frac{\tilde{b}}{\tilde{c}} \right) = \ord_j \left( \frac{b_n}{c_n} \right) + \edeg (a) \sum_{i=m}^{n-1} \ord_j (x_i).$$
Dividing both sides by $\ord_j (x)$,
				$$\frac{\ord_j \left( \frac{\tilde{b}}{\tilde{c}} \right)}{\ord_j (x)} = \frac{\ord_j \left(\frac{b_n}{c_n} \right)}{\ord_j (x)} + \edeg (a) \sum_{i=m}^{n-1} \frac{\ord_j (x_i)}{\ord_j (x)}.$$
Taking the limit as $j \rightarrow \infty$ and applying Theorem~\ref{limit exists},
	$$w \left( \frac{\tilde{b}}{\tilde{c}} \right) = w \left( \frac{b_n}{c_n} \right) + \edeg (a) \sum_{i=m}^{n-1} w (x_i).$$
Furthermore, for $j \ge n \ge m$,
	$$\frac{b_n}{c_n} = \left( \prod_{i=n}^{j-1} x_i^{\edeg (a)} \right) \frac{b_j}{c_j}, \quad
	\ord_j \left( \frac{b_n}{c_n} \right) = \edeg (a) + \edeg (a) \sum_{i=n}^{j-1} \ord_j (x_i) > 0,$$
so $\ord_j (\frac{b_n}{c_n}) > 0$.
Therefore $w(\frac{b_n}{c_n}) \ge 0$.
It follows that $w (\frac{\tilde{b}}{\tilde{c}}) \ge \edeg (a) \sum_{i=m}^{n-1} w (x_i)$ 
for all $n \ge m$.
Theorem~\ref{arch} implies that $\sum_{i=m}^{\infty} w (x_i) = +\infty$, 
so it follows that $w (\frac{\tilde{b}}{\tilde{c}}) = +\infty$ and thus $w (a) = +\infty$.
This establishes item~1.  

That $P_\infty$ is  a prime  ideal of  $S$  follows from Remark~\ref{w valuation}.
Since $T = S [1/x]$ and $w (x) = 1$, it follows from Remark~\ref{w valuation} that $T$ has no elements of $w$-value $-\infty$ and that $P'_{\infty} = \{ a \in T ~|~ w (a) = +\infty \}$ is a prime ideal of $T$.
By Theorem~\ref{w properties}\ref{w properties 3}, we have $P'_\infty \subseteq V$.
Since $S = T \cap V$ by Theorem~\ref{hull}, we conclude that $P_\infty = P'_\infty$.
 This establishes  item 2.   
\end{proof}

\begin{remark}  \label{describing P}    With notation  as in Theorem~\ref{nonarch ew},  the 
prime ideal 
$$
P_\infty ~ = ~ \{a \in N ~|~ aS \text{ is not } N\text{-primary} \}.
$$
It follows that $P_\infty$ is the unique prime ideal of $S$ of dimension 1,  and   we have
 $T = S_{P_\infty}  = S[1/x]$,  
where $T$ is the Noetherian hull of $S$.    If $a \in N \setminus P_\infty$ and $b \in P_\infty$,  then
$\frac{b}{a}  \in P_\infty \subseteq S$.  Hence $b \in aS$.  
\end{remark}

 In Theorem~\ref{overview}   we characterize among Shannon extensions of dimension at least $2$ those that are non-archimedean.\footnote{A non-archimedean integral domain necessarily has dimension at least 2 by Remark~\ref{dim nonarch}.}

\begin{theorem}  \label{overview}
Assume  Setting~\ref{setting 1}   and that $\dim S \ge 2$. Let $P = \bigcap_{n \ge 1}x^nS$.  
  Then the following are equivalent. 
 \begin{enumerate}
\item[{\rm (1)}]  $S$ is non-archimedean. 

\item[{\rm (2)}]   $S^* = T = (P:P)$.  

\item[{\rm (3)}]  $P$ is a nonzero prime ideal of $S$.  

\item[{\rm (4)}] 
 Every nonmaximal prime ideal  of $S$ is contained in $P$.

\item[{\rm (5)}] 
  $T$ is a regular local ring. 

\end{enumerate}
\end{theorem}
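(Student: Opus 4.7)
My plan is to use the identification $P = P_\infty$ as the central technical input, where $P_\infty = \{a \in S : w(a) = +\infty\}$ is the prime ideal from Theorem~\ref{nonarch ew}. This identification holds in general (not requiring $S$ non-archimedean) and is proved as follows: if $0 \ne a \in P$, then $a/x^n \in S \subseteq V$ for every $n$, so Theorem~\ref{w properties}(1) gives $w(a) \ge n$ for every $n$, forcing $w(a) = +\infty$; conversely, if $a \in P_\infty$, then $w(a/x^n) > 0$, so by Theorem~\ref{w properties}(2), $a/x^n \in \m_V$, and combined with $a/x^n \in T$ and $S = V \cap T$, we obtain $a \in x^n S$. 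Together with Remark~\ref{dim nonarch} (archimedean iff $P = 0$) and Theorem~\ref{nonarch ew}(2) ($P_\infty$ prime under non-archimedean), this yields (1) $\iff$ (3) immediately.

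Next, assuming (1), the main structural fact I would prove is $T = S_P$. The inclusion $T \subseteq S_P$ uses $x \notin P$ (since $w(x) = 1$). For the reverse, a nonunit $b \in N \setminus P$ has $bS$ being $N$-primary by Remark~\ref{describing P}, so some power of $x$ lies in $bS$, making $b$ a unit in $T$ by Theorem~\ref{hull}(2). Hence $T = S_P$ is local with maximal ideal $PT$, and since $T$ is a regular Noetherian UFD by Theorem~\ref{hull}(3), it is a regular local ring, giving (5). From $T = S_P$ local one also obtains (4): a nonmaximal prime $\q$ of $S$ cannot contain $x$ (since $xS$ is $N$-primary), so $\q T$ is a prime of $T$ sitting inside $\m_T = PT$, whose contraction is $P$, yielding $\q \subseteq P$. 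For (2), I would close the chain $T \subseteq (P:P) \subseteq S^* \subseteq T$: the first inclusion uses Remark~\ref{describing P} (any $y \in N \setminus P$ has $(1/y)P \subseteq P$, so in particular $1/x \in (P:P)$); the middle is standard (preservation of the nonzero fractional ideal $P$ makes $q \in (P:P)$ almost integral over $S$); the last uses that $T$, being a regular local ring, is Noetherian normal and hence completely integrally closed, giving $S^* \subseteq T^* = T$.

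For the reverse directions back to (1): (2) $\implies$ (1) because $(P:P) = T \ne F$ forces $P \ne 0$; (4) $\implies$ (1) because $\dim S \ge 2$ produces a nonzero nonmaximal prime of $S$, which by (4) lies in $P$. The main obstacle will be (5) $\implies$ (1). Here one starts by contracting the nonzero maximal ideal $\m_T$ (nonzero since $\dim T = \dim S - 1 \ge 1$) to obtain a nonzero nonmaximal prime $P' = \m_T \cap S$ of $S$, which by Theorem~\ref{hull}(2) consists precisely of those $a \in N$ with $aS$ not $N$-primary; one must then show $P' \subseteq P_\infty$, equivalently that each such $a$ has $w(a) = +\infty$. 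The difficulty lies in ruling out the archimedean alternative of Theorem~\ref{arch}: when $w$ is a finite-valued rank $1$ valuation, the Noetherian hull $T$ must fail to be local, and I expect this to follow from a careful analysis of $\epd(T) = \epd(S/R)$ as in Discussion~\ref{3.2}, with the essential prime divisors in the archimedean case generating height $1$ primes of $T$ that cannot all be consolidated into a single maximal ideal.
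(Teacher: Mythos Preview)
Your argument is largely correct and in several places more self-contained than the paper's (which simply cites \cite{HLOST} for the equivalence of (1), (2), (3)). The identification $P = P_\infty$ in general is a clean observation that streamlines (1)~$\iff$~(3), and your derivation of $T = S_P$ under (1) yields (4) and (5) efficiently; the paper instead reaches (4) by showing $P_\infty \subseteq (S:T) = P$ and then passes (4)~$\Rightarrow$~(5)~$\Rightarrow$~(1).

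The genuine gap is at (5) $\Rightarrow$ (1). You correctly flag this as the obstacle, but your proposed route---arguing that the height~$1$ primes in $\epd(T)$ ``cannot all be consolidated into a single maximal ideal''---is not developed into a proof, and it is not clear how to make it work: if $T$ were local, its height~$1$ primes would trivially sit inside the unique maximal ideal, so the mere structure of $\epd(T)$ produces no contradiction. The paper's argument is different and uses the explicit archimedean boundary valuation of Theorem~\ref{v-values}. Assuming $S$ archimedean with $\dim S \ge 2$, pick $f \in S$ with $\edeg(f) > 0$ (possible by Remark~\ref{e trivial case}); since $w(f)$ is finite, choose an $N$-primary $y \in S$ with $w(y) > w(f)$. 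Then $v(y) > v(f)$ in the lexicographic order of Theorem~\ref{v-values}, so $v(f+y) = v(f)$ and hence $\edeg(f+y) = \edeg(f) > 0$. By Lemma~\ref{e facts}\ref{e facts 4}, both $f$ and $f+y$ are non-units of $T$, while their difference $y$ is a unit of $T$; thus $T$ is not local. The missing ingredient in your sketch is precisely this valuation-theoretic structure of $V$ in the archimedean case, not further analysis of $\epd(T)$.
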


\begin{proof}
 The equivalence of the first three items is established in  \cite[Theorem~6.9 and Corollary~6.10]{HLOST}.  
 
 (1) $\Rightarrow$ (4)  Since $T = S[1/x]$, we have $P = (S:T)$. Since $S$ is not archimedean,  Theorem~\ref{nonarch ew}(2) gives that $P_\infty$ is an ideal of $T$. Thus  $P_\infty \subseteq (S:T) = P$. By  Remark~\ref{describing P}, every nonmaximal prime ideal of $R$ is contained in $P_\infty$,  so this forces $P_\infty = P$. Hence (4)   holds. 
  
 (4) $\Rightarrow$ (5)  Since $T = S[1/x]$ and $x$ is an $N$-primary element of $S$, (4) implies that $PT$ is the unique maximal ideal of $T$.  Thus by Theorem~\ref{flat}, $T$ is a regular local ring.

 (5) $\Rightarrow$ (1) Suppose $S$ is archimedean.  By assumption $\dim S > 1$. Thus there exists $f \in S$ such that $f$ is a non-unit of $T$, so by Lemma~\ref{e facts}\ref{e facts 4},  
  $\edeg (f) > 0$.
Since $S$ is archimedean, there exists an $N$-primary element $y \in S$ such that $w (y) > w (f)$.
With notation as in Theorem~\ref{v-values}, since $v (y) > v (f)$, it follows that $v (f) = v (f + y)$, so $\edeg (f + y) = \edeg (f)$.
Thus by Lemma~\ref{e facts}\ref{e facts 4}, $f + y$ is a non-unit of $T$.
But $(f + y) - f = y$ is a unit of $T$.
Therefore $T$ is not local.
\end{proof}  

\begin{corollary}
Assume Setting~\ref{setting 1}.
Assume that $S$ is non-archimedean with $\dim S \ge 2$ and denote $P = \bigcap_{n \ge 1} x^n S$.
Then $S / P$ is a rank $1$ valuation domain on the residue field $T / P$ of $T$.
Every valuation domain $\mathcal V$ that dominates $S$ has a prime ideal lying over $P$.
\end{corollary}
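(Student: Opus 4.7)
The plan is to exploit the identification $P = P_{\infty}$, which is established in the proof of the implication (1)$\Rightarrow$(4) of Theorem~\ref{overview}. Together with Theorem~\ref{overview}(5) and its proof, this gives that $T$ is local with maximal ideal $P$. Moreover, $T = S_{P}$: by Remark~\ref{describing P}, $S \setminus P$ consists exactly of the units of $S$ together with the $N$-primary elements, and by Theorem~\ref{hull}(2) these are all units in $T$, so $S_{P} \subseteq T$; the reverse inclusion is immediate since $x \in S \setminus P$. Consequently $T/P$ is simultaneously the residue field of $T$ and the quotient field of $S/P$.

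To prove $S/P$ is a valuation ring of $T/P$, let $\bar q \in (T/P)^{\times}$ and write $q = a/b$ with $a,b \in S \setminus P$. Then $a/b \in T$, and since $a,b \notin P = P_{\infty}$, both $w(a)$ and $w(b)$ are finite (Theorem~\ref{nonarch ew} and the finiteness discussion in the proof of Theorem~\ref{w properties}(4)). Trichotomy gives $w(a) \ge w(b)$ or $w(b) > w(a)$; in the first case $a/b \in V \cap T = S$ so $\bar q \in S/P$, and symmetrically for the second. Well-definedness of the induced function $w'$ on $T/P$ (via Remark~\ref{w valuation}(4)) shows that the value group of $S/P$ is a subgroup of $\mathbb{R}$, so $S/P$ has rank one.

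For the second assertion, let $\mathcal V$ be a valuation domain with valuation $\nu : F^{\times} \to \Gamma$ dominating $S$. Define
$$\mathfrak{q} \: = \: \{ v \in \mathcal V \mid \nu(v) > \nu(a') \text{ for every } a' \in S \setminus P \}.$$
Since $S \setminus P$ is multiplicatively closed (as $P$ is prime in $S$), a standard check shows $\mathfrak{q}$ is a prime ideal of $\mathcal V$: if $v_1 v_2 \in \mathfrak{q}$ but $\nu(v_i) \le \nu(a'_i)$ for $a'_i \in S \setminus P$, then $\nu(v_1 v_2) \le \nu(a'_1 a'_2)$ with $a'_1 a'_2 \in S \setminus P$, a contradiction. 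The inclusion $\mathfrak{q} \cap S \subseteq P$ is immediate by taking $a' = a$ in the defining condition.

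The main obstacle is the reverse inclusion $P \subseteq \mathfrak{q}$, which requires the strict inequality $\nu(a) > \nu(a')$ for every $a \in P$ and $a' \in S \setminus P$. To resolve this, I would show that $a/(a')^{n} \in S$ for every $n \ge 1$: since $T = S_{P}$ gives $a/(a')^{n} \in T$, and since $w(a) = +\infty$ while $w(a')$ is finite, we have $w(a/(a')^{n}) = +\infty > 0$, so $a/(a')^{n} \in V \cap T = S$. Applying this with $n = 2$ yields $\nu(a) \ge 2\nu(a')$; if $a' \in N$ then $\nu(a') > 0$ by domination and so $\nu(a) > \nu(a')$, while if $a'$ is a unit of $S$ then $\nu(a') = 0 < \nu(a)$ since $a \in N \subseteq \mathfrak{m}_{\mathcal V}$. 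Thus $\mathfrak{q} \cap S = P$, completing the proof.
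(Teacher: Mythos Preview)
Your proof is correct, but both parts take a different route from the paper.

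For the first assertion, the paper argues more directly: it cites the linear ordering of the principal $N$-primary ideals of $S$ (from \cite[Corollary~5.5]{HLOST}) together with the fact that $S/P$ is a one-dimensional local domain (Theorem~\ref{overview}) to conclude immediately that $S/P$ is a rank~1 valuation ring. Your argument via the function $w$ is more self-contained to the machinery developed in this paper, which is a virtue; note, however, that the step ``$w(a/b) \ge 0 \Rightarrow a/b \in V$'' implicitly uses $\edeg(a/b) = 0$ (valid since $a, b \in S \setminus P \subseteq T^{\times}$ by Remark~\ref{describing P}) and the full conclusion of Theorem~\ref{w properties}(4), not merely its finiteness clause. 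Making this explicit would tighten the argument.

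For the second assertion, the paper's choice of prime is considerably more efficient: it sets $Q = \bigcap_{n \ge 1} x^n \mathcal V$, so that the inclusion $P = \bigcap_{n \ge 1} x^n S \subseteq Q$ is immediate, and then observes that $x \notin Q$ forces $Q \cap S$ to be a nonmaximal prime of $S$, whence $Q \cap S \subseteq P$ by Theorem~\ref{overview}(4). Your prime $\mathfrak q$ also does the job, but the verification $P \subseteq \mathfrak q$ requires the auxiliary computation with $a/(a')^2 \in S$; the paper's approach bypasses this entirely. Incidentally, your re-derivation of $T = S_P$ is unnecessary, as this is already recorded in Remark~\ref{describing P}.
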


\begin{proof}
The principal $N$-primary ideals of $S$ are linearly ordered with respect to inclusion \cite[Corollary 5.5]{HLOST},
each principal $N$-primary ideal contains $P$,
and $S / P$ is a $1$-dimensional local domain by Theorem~\ref{overview}.
Thus $S / P$ is a rank $1$ valuation domain.

Let $\mathcal V$ be a valuation domain dominating $S$ and let $Q = \bigcap_{n \ge 1}x^n \mathcal V$.
Then $Q$ is a prime ideal of $\mathcal V$ and $x \not\in Q$.
Since $x^n S \subseteq x^n \mathcal V$, we have $P \subseteq Q$.  
Also, since $x \not\in Q$, $Q \cap S$ is a nonmaximal prime ideal of $S$. 
By Theorem~\ref{overview}, every nonmaximal prime ideal of $S$ is contained in $P$, 
so we conclude that $P = Q \cap S$.  
\end{proof}
 
  




In Theorem~\ref{nonarch valuation} and Corollary~\ref{v-values2}, we give a complete description of the boundary valuation in the non-archimedean case.

\begin{theorem}\label{nonarch valuation}
Assume Setting~\ref{setting 1} and let $\edeg$ and $w$ be as in Definition~\ref{e def} and Notation~\ref{w-function}.
Assume that $S$ is non-archimedean with $\dim S \ge 2$ and denote $P = \bigcap_{n \ge 1} x^n S$.
Then:
\begin{enumerate}
\item
$\edeg$ is a rank 1 valuation on $F$ whose valuation ring $E$ contains $V$. If in addition $R / (P \cap R)$ is a regular local ring, then $E$ is the order valuation ring of $T$.

\item
$w$ induces a rational rank $1$ valuation $w'$ on the residue field $E/\mathfrak  m_E$  of $E$. The valuation ring  $ W'$  defined by $w'$
extends the valuation ring $S / P$,
and the value group of $W'$ is the same as the value group of $S / P$.

\item
$V$ is the valuation ring defined by the composite valuation of $\edeg$ and $w'$.


\end{enumerate}
The following pullback diagram illustrates $V$:
\begin{equation*}
\xymatrix{
    V \ar@{^{(}->}[d] \ar@{->>}[r] & V/\mathfrak m_E =W'\ar@{^{(}->}[d] \\
    E \ar@{->>}[r] & E / \mathfrak{m}_E
}
\end{equation*}
\end{theorem}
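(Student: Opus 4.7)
The plan is to prove items (1), (2), and (3) simultaneously by exhibiting the rank~$2$ composite structure of the boundary valuation $V$: I aim to show $V$ has a unique nonmaximal nonzero prime $\mathfrak q$ for which $V_\mathfrak q = E$, so that $V$ is recovered by pulling back a rank~$1$ valuation on the residue field $E/\mathfrak m_E$ along the natural projection.

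First I would establish $V \subseteq E$: if $v \in V$ satisfied $\edeg(v) < 0$, then $\edeg(1/v) > 0$ would give $w(1/v) = +\infty$ by Theorem~\ref{nonarch ew}(1), forcing $w(v) = -\infty$, which contradicts $v \in V$ since that implies $w(v) \geq 0$. Next, define $\mathfrak q = \{v \in V : \edeg(v) > 0\} \cup \{0\}$. By Corollary~\ref{e value 0} and Theorem~\ref{nonarch ew}(1), for $v \in V$ the condition $\edeg(v) > 0$ is equivalent to $w(v) = +\infty$, so $\mathfrak q = \{v \in V : w(v) = +\infty\}$; the valuation-like properties of $w$ in Remark~\ref{w valuation} make this a prime ideal of $V$, proper in $\mathfrak m_V$ since $w(x) = 1$ and nonzero since $P \subseteq \mathfrak q$ (using Theorem~\ref{overview}). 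I then identify $V_\mathfrak q = E$: for $V_\mathfrak q \subseteq E$, write $a \in V_\mathfrak q$ as $b/c$ with $b \in V$ and $c \in V \setminus \mathfrak q$, note that $\edeg(c) = 0$ (as $\edeg(c) \geq 0$ and $\edeg(c) > 0$ would put $c \in \mathfrak q$), and conclude $\edeg(a) \geq 0$; conversely, for $a \in E \setminus V$, the valuation ring property of $V$ gives $1/a \in \mathfrak m_V$, while $V \subseteq E$ together with $\edeg(a) \geq 0$ forces $\edeg(1/a) = 0$, so $1/a \in V \setminus \mathfrak q$ and $a \in V_\mathfrak q$. Hence $E = V_\mathfrak q$ is a valuation ring, and since $\edeg$ has image in $\mathbb Z$, $E$ is a DVR with associated valuation $\edeg$, completing the main part of item~(1). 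For the supplementary assertion that $E$ is the order valuation of $T$ when $R/(P \cap R)$ is regular, Theorem~\ref{overview} gives that $T$ is regular local with maximal ideal $P$, and one compares $\edeg$ on $T$ with $\ord_T$ by tracking how transforms of principal ideals in the $R_n$ stabilize to compute multiplicities in the localization $T = R_{P \cap R}$; this detailed comparison is the most technical step and I expect it to be the main obstacle.

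For item~(2), $w$ descends to $w'$ on $E/\mathfrak m_E$ because $\mathfrak m_E \subseteq \{w = +\infty\}$ makes $w$ constant on cosets, and $w'$ inherits multiplicativity and the ultrametric property from $w$. Rank~$1$ is immediate since values lie in $\mathbb R$. For rational rank~$1$: Proposition~\ref{rational rank} adapts verbatim to a starting point $R_m$ in place of $R$, yielding $\tau_m \leq (w(y_1) + \cdots + w(y_r))/(r-1)$ whenever $y_1, \ldots, y_r \in \mathfrak m_m$ with $r \geq 2$ have finite rationally independent $w$-values, where $\tau_m = \sum_{i \geq m} w(x_i)$. In the non-archimedean case $\tau_m = +\infty$ for every $m$ (each $w(x_i)$ is finite by Theorem~\ref{w properties}(5)), so no such pair exists. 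Since any two $N$-primary elements of $S$ lie in a common $\mathfrak m_m = R_m \cap N$ and have finite $w$-values, their $w$-values are rationally dependent; by Theorem~\ref{w-values} the value group of $w'$ is generated by such values, so it has rational rank~$1$ (nontrivial because $w(x) = 1$). The containment $W' \supseteq S/P$ is immediate from Theorem~\ref{w properties}(1), and the equality of value groups follows from Corollary~\ref{e value 0}.

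For item~(3), the identification $V_\mathfrak q = E$ gives the composite structure $V = \pi_E^{-1}(V/\mathfrak q)$. It remains to show $V/\mathfrak q = W'$. The inclusion $V/\mathfrak q \subseteq W'$ is immediate since $v \in V$ implies $w(v) \geq 0$. Conversely, if $\bar a \in W'$ but $a \notin V$, then $1/a \in \mathfrak m_V$ and the analysis above forces $\edeg(1/a) = 0$; but $1/a$ is a nonzero nonunit of $V$, so in the composite valuation on $V$ with $\edeg$ dominant this forces $w(1/a) > 0$, giving $w(a) < 0$, which contradicts $\bar a \in W'$. Hence $a \in V$ and $\bar a \in V/\mathfrak q$, confirming the displayed pullback square.
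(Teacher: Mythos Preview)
Your approach is essentially the paper's, with a slight reorganization: the paper first proves directly that $\edeg$ is a valuation (closure of $\{\edeg>0\}$ under addition, using that $V$ is a valuation ring), then observes $\mathfrak m_E$ is a prime of $V$ and hence $V_{\mathfrak m_E}=E$; you instead first exhibit $\mathfrak q=\{w=+\infty\}$ as a prime of $V$ and then identify $V_\mathfrak q$ with the set $\{\edeg\ge 0\}$, which makes $E$ a valuation ring for free. Both are fine and amount to the same content. Your rational rank~$1$ argument via Proposition~\ref{rational rank} applied at $R_m$ is exactly how the paper does it, as is the appeal to Corollary~\ref{e value 0} for the value group.

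There is one genuine soft spot. In item~(3), to show $W'\subseteq V/\mathfrak q$ you take $a$ a unit of $E$ with $w(a)\ge 0$ but $a\notin V$, note $1/a\in\mathfrak m_V$ with $\edeg(1/a)=0$, and then assert that ``in the composite valuation on $V$ with $\edeg$ dominant this forces $w(1/a)>0$.'' That last inference is circular: at this point you only know $V$ is the composite of $E$ with $V/\mathfrak q$, not yet that the second component is governed by $w$. The correct (and shorter) argument is to invoke Theorem~\ref{w properties}(4) directly: since $\edeg(a)=0$, that result says $a\in V\iff w(a)\ge 0$, and you are done. This is exactly what the paper does to establish $V=\{a\in F: w(a)\ge 0\}$.

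For the supplementary claim that $E$ is the order valuation ring of $T$ when $R/(P\cap R)$ is regular, you correctly identify this as the hardest step. The paper does not carry out the transform-tracking argument you sketch; instead it invokes an external result of Granja and S\'anchez-Giralda (\cite[Theorem~10]{Gra2}), which says that under the hypothesis $\sum w'(\mathfrak m_i/\mathfrak p_i)=\infty$ one has $\ord_{(R_i)_{\mathfrak p_i}}(f)=\ord_{R_i}(f)$ whenever $\ord_{R_i}(f)=\edeg(f)$. Since $(R_i)_{\mathfrak p_i}=T$, this immediately gives $\edeg=\ord_T$ on $T$.
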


\begin{proof}
We prove that the multiplicative function $\edeg$ defines the rank $1$ valuation overring of $V$.
By Theorem~\ref{nonarch ew}, if $a \in F^\times$   and $\edeg (a) > 0$, then 
$w (a) = +\infty$, so $a \in {\mathfrak m}_V$ by Theorem~\ref{w properties}\ref{w properties 3}.
On the other hand, if $a \in F^\times$ and $\edeg (a) < 0$, then $w (a) = - \infty$, so $a \notin V$.
Therefore if $a \in V$ is nonzero, then $\edeg (a) \ge 0$.

To see that $\edeg$ defines a valuation, it suffices to show that nonzero elements 
of $F$  with positive $\edeg$-value are closed under addition.
Let $a, b \in F$ be two such elements, and assume without loss of generality that $(a, b) V = a V$.
Thus $\frac{b}{a} \in V$, so $\edeg (b) \ge \edeg (a)$, and $1 + \frac{b}{a} \in V$, so $\edeg (1 + \frac{b}{a}) \ge 0$.
Therefore $\edeg (a + b) = \edeg(a(1+\frac{b}{a})) =  \edeg (a) + \edeg (1 + \frac{b}{a}) \ge \edeg (a)$.
It follows that $e$ defines a rank 1 discrete valuation ring $E$, and  the maximal ideal  
$\mathfrak m_E$  of $E$  is a prime ideal of $V$.  
Thus  $V_{\mathfrak m_E} = E$;  cf.  \cite[(11.3)]{Nag}.

Assume in addition that $R / (P \cap R)$ is a regular local ring.
Let $\p_i = P \cap R_i$.
Since $S$ is non-archimedean, Theorem~\ref{arch} implies that $\sum_{i=0}^{\infty} w' (\m_i / \p_i) = \infty$.
Thus \cite[Theorem 10]{Gra2} implies that for every element $f \in R_i$ such that $\ord_{R_i} (f) = \edeg (f)$, we have that $\ord_{(R_i)_{\p_i}} (f) = \ord_{R_i} (f)$.
Since $(R_i)_{\p_i} = T$, we conclude $E$ is the order valuation ring of $T$.
This completes the proof of item~1.

For items 2 and 3, we first observe that by Remark~\ref{w valuation}, $w$ induces a rank 1 valuation $w'$ on  $E/\mathfrak  m_E$.
We prove that $V = \{ a \in F \mid w (a) \ge 0 \}$.
For $a \in F$ such that $w (a) \ne 0$, it follows from the definitions  of $V$ and  $w$ 
that $a \in V$ if and only if $w (a) > 0$.   For $a \in F$ such that $w (a) = 0$,
Theorem~\ref{nonarch ew} implies     
that $\edeg (a) = 0$, so Theorem~\ref{limit exists}(5) implies that $a \in V$.
This proves  item~3.

From Proposition~\ref{rational rank}, it follows that $w'$ has rational rank $1$.
Since $P = \m_E \cap S$ from Theorem~\ref{nonarch ew}, the valuation ring $W'$ extends the valuation ring $S / P$.
Corollary~\ref{e value 0} implies that the range of $w'$ is the same as the range of its restriction to the field $T / P$.
This completes the proof of item 2.
\end{proof}


 Corollary~\ref{v-values2}  describes  a valuation $v$ on the field  $F$ that defines
the boundary valuation $V$  of the Shannon extension    in  Theorem~\ref{nonarch valuation}.

\begin{corollary}\label{v-values2}
Assume  notation as in Theorem~\ref{nonarch valuation}.  
 The   boundary valuation $V$ of $S$ has
rank 2 and rational rank 2.  
Fix an element $z \in E$ such that $\m_E = z E$.
For an element $a \in F^{\times}$, define

\begin{alignat*}{4}
&v : ~ && F^{\times} && ~\rightarrow~ ~&&{\mathbb Z} ~ \oplus ~ ~ {\mathbb Q} \\
& ~ &&a &&~ \mapsto   &&\left( \frac{\edeg (a)}{\edeg(z)}, ~ w \left( \frac{a}{z^{\frac{\edeg (a)}{\edeg (z)}}} \right) \right),
\end{alignat*}
where ${\mathbb Z} ~ \oplus  ~{\mathbb Q}$ is ordered lexicographically. 
The  function  $v$ is a valuation on $F$ that defines the boundary
valuation ring $V$.
\end{corollary}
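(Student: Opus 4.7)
The approach is to realize that the formula defining $v$ is nothing but the composite valuation from Theorem~\ref{nonarch valuation}(3) written in coordinates once the uniformizer $z$ of $E$ is used to split the short exact sequence of value groups
\[
0 \longrightarrow \Gamma_{w'} \longrightarrow \Gamma_V \longrightarrow \Gamma_E \longrightarrow 0.
\]
Since $E$ is a DVR, $\Gamma_E \cong \mathbb{Z}$ is free, so such an extension of ordered abelian groups splits lexicographically, giving $\Gamma_V \cong \mathbb{Z} \oplus_{\rm lex} \Gamma_{w'}$. The two coordinates of $v(a)$ extract this splitting: first project to $\mathbb{Z}$ via $\edeg$, then correct by the appropriate power of $z$ so as to land in the unit group of $E$, and finally apply $w$ (which on units of $E$ agrees with $w'$ on $E/\m_E$).

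To check well-definedness, note that since $E$ is a DVR with uniformizer $z$, the group $\edeg(F^\times)$ is generated by $\edeg(z)$, so $n(a) := \edeg(a)/\edeg(z) \in \mathbb{Z}$ for every $a \in F^\times$. The element $a/z^{n(a)}$ has $\edeg$-value $0$, hence is a unit of $E$. The value $w(a/z^{n(a)})$ is finite because, by Corollary~\ref{e value 0}, there exists $u \in T^\times$ with $w(a/z^{n(a)}) = w(u)$, and by Theorem~\ref{hull}(2) we can write $u = s/t$ with $s,t \in S$ both $N$-primary; by Remark~\ref{describing P} such $s$ and $t$ lie outside $P_\infty$, so their $w$-values are finite.

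Next I would verify that $v$ is a valuation. Multiplicativity follows immediately from $n(ab) = n(a) + n(b)$ together with multiplicativity of $w$ on units of $E$. For the ultrametric inequality I would split into cases. When $n(a) \ne n(b)$, say $n(a) < n(b)$, the valuation property of $\edeg$ forces $\edeg(a+b) = \edeg(a)$, so $n(a+b) = n(a)$, and
\[
\frac{a+b}{z^{n(a)}} \;=\; \frac{a}{z^{n(a)}} + z^{n(b) - n(a)}\frac{b}{z^{n(b)}} \;\equiv\; \frac{a}{z^{n(a)}} \pmod{\m_E},
\]
yielding $v(a+b) = v(a)$. When $n(a) = n(b)$, either $\edeg$-cancellation produces $n(a+b) > n(a)$, so the lex inequality is strict in the first coordinate, or $n(a+b) = n(a)$ and the second coordinate's inequality reduces to the ultrametric inequality for $w'$ on $E/\m_E$.

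Finally, I would identify the valuation ring of $v$ with $V$ via the pullback diagram of Theorem~\ref{nonarch valuation}: $a \in V$ iff $a \in E$ and its image in $E/\m_E$ lies in $W'$, which translates to $v(a) \ge (0,0)$ lexicographically. The value group $\mathbb{Z} \oplus_{\rm lex} \Gamma_{w'}$ has exactly one nontrivial proper convex subgroup, namely $\{0\} \oplus \Gamma_{w'}$, so $V$ has rank $2$; combined with the rational rank $1$ statement for $w'$ from Theorem~\ref{nonarch valuation}(2), this gives rational rank $2$. I expect the main technical point to be the ultrametric inequality in the equal-$n$ case, where one must carefully track how a jump in $\edeg$-value under summation governs the passage from $w$ on units of $E$ to the induced valuation $w'$ on the residue field.
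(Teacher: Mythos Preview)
Your proof is correct, but it works harder than necessary compared with the paper's argument. The paper exploits the fact that $V$ is already known to be a valuation ring: once one checks that $v$ is a group homomorphism $F^\times \to \mathbb{Z}\oplus\mathbb{Q}$ and that $v(a)\ge 0 \iff a\in V$, the ultrametric inequality comes for free. Indeed, if $v(a)\le v(b)$ then $v(b/a)\ge 0$, so $b/a\in V$, hence $1+b/a\in V$, hence $v(a+b)=v(a)+v(1+b/a)\ge v(a)$. This bypasses your case analysis entirely.

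Your approach, by contrast, verifies the valuation axioms directly from the composite description, splitting on whether $n(a)=n(b)$ and invoking Theorem~\ref{nonarch ew}(1) and Remark~\ref{w valuation} to handle the second coordinate. This is perfectly sound and arguably more self-contained, since it does not rely on knowing in advance that $V$ is a valuation ring; it would work even if one were \emph{defining} $V$ via $v$ rather than recovering it. The cost is the extra bookkeeping in the equal-$n$ case. Both routes arrive at rank~$2$ and rational rank~$2$ the same way, via the convex subgroup $\{0\}\oplus\Gamma_{w'}$ and the rational rank~$1$ statement for $w'$ in Theorem~\ref{nonarch valuation}(2).
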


\begin{proof}   Theorem~\ref{nonarch valuation}(4) states  that  
$V/\mathfrak m_E = W'$ and 
 $V_{\mathfrak m_E} = E$. 
Since $\m_E = z E$ and $\edeg$ is a valuation defining $E$,  the fraction
$ \frac{\edeg (a)}{\edeg(z)}$ is an integer and $aE =  z^{\frac{\edeg (a)}{\edeg(z)}}E$. 
Therefore $a$ and $z^{\frac{\edeg (a)}{\edeg(z)}}$  have the same $\edeg$-value and 
their ratio has $\edeg$-value zero,  and hence has finite $w$-value.
 Theorem~\ref{nonarch valuation}(3) says $w'$
 has  rational rank 1.   Thus    $v(F^\times) \subseteq {\mathbb Z} ~ \oplus  ~{\mathbb Q}$.
 The function $v$ is a homomorphism of the multiplicative group $F^\times$ into the 
 additive group ${\mathbb Z} ~ \oplus  ~{\mathbb Q}$,  and for  $a \in F^\times$, we have $v(a) \ge 0 \iff a \in V$.   It follows that $v$  
 is a valuation on $F$ that defines $V$. 
\end{proof}

Unlike the archimedean case  considered in Theorem~\ref{v-values}, the 
boundary valuation in the non-archimedean case defined in Corollary~\ref{v-values2} depends on assigning a
value in the second component to an element $z \in E$ of minimal positive $\edeg$-value.

Theorem~\ref{nonarch valuation} implies that a non-archimedean Shannon extension may be described as a pullback.
In the paper \cite{HOT} in preparation, we are interested in characterizing non-archimedean Shannon extensions as pullbacks.






\end{document}